\newtheorem{theorem}{Theorem}[section]
\newtheorem{lemma}{Lemma}[section]
\newtheorem{corollary}{Corollary}[section]
\theoremstyle{definition}
\theoremstyle{remark}
\newtheorem{rk}{Remark}[section]
\numberwithin{equation}{section}
\newcommand {\SN} {{\mathbb N}}
\newcommand {\SR} {{\mathbb R}}
\newcommand {\SZ} {{\mathbb Z}}
\newcommand{\be}{\begin{equation}}
\newcommand{\ee}{\end{equation}}
\newcommand{\bea}{\begin{eqnarray}}
\newcommand{\eea}{\end{eqnarray}}
\newcommand{\bv}\boldsymbol{}
\newcommand{\hs}{H(x,y,z;P_s)}
\newcommand{\hsd}{H(x,y,z;P_s)-H(x-\Delta,y,z;P_s)}
\begin{document}

\title[Divisors of shifted primes]{Divisors of shifted primes}

\author{Dimitris Koukoulopoulos}
\address{Department of  Mathematics\\
University of Illinois at Urbana-Champaign\\
1409 West Green Street\\
Urbana\\ IL 61801\\ U.S.A.} \email{{\tt dkoukou2@math.uiuc.edu}}

\subjclass[2000]{Primary: 11N25, secondary: 11N13}

\date{\today}

\maketitle

\begin{abstract} We bound from below the number of shifted primes $p+s\le x$
that have a divisor in a given interval $(y,z]$. Kevin Ford has
obtained upper bounds of the expected order of magnitude on this
quantity as well as lower bounds in a special case of the parameters
$y$ and $z$. We supply here the corresponding lower bounds in a
broad range of the parameters $y$ and $z$. As expected, these bounds
depend heavily on our knowledge about primes in arithmetic
progressions. As an application of these bounds, we determine the
number of shifted primes that appear in a multiplication table up to
multiplicative constants.
\end{abstract}

\section{Introduction}\label{intro} When one studies the multiplicative
structure of the integers a natural and important question that
arises is how many integers possess a divisor in a prescribed
interval $(y,z]$. More precisely, for $y<z$ and $x\ge1$ define
$$H(x,y,z)=\lvert\{n\le x:\exists d|n\;\text{with}\;y<d\le
z\}\rvert.$$ The study of this function was initiated by Besicovitch
\cite{bes}\;and was further developed by Erd\H os \cite{erd3},
\cite{erd4}, \cite{erd2}\;and Tenenbaum \cite{ten1}, \cite{ten2},
who obtained bounds on $H(x,y,z)$ in various cases of the parameters
$y$ and $z$. In his seminal paper \cite{ten}\;Tenenbaum focused on
estimating $H(x,y,z)$ for all $x,y,z$ and he obtained reasonably
sharp bounds on it. A consequence of Tenenbaum's work was the
realization that, for fixed $x$ and $y$, as $z$ varies in $(y,x]$
the behavior of $H(x,y,z)$ changes when $z$ is around $y+y(\log
y)^{-\log4+1}$, $2y$ and $y^2$. The problem of establishing the
correct order of magnitude of $H(x,y,z)$ was completely resolved by
Ford in his profound work\;\cite{kf}, where he discovered a striking
connection between the distribution of the prime factors of integers
with a divisor in $(y,z]$ and random walks with certain constraints.
We state here the core of the main theorem in \cite{kf}. First, for
a given pair $(y,z)$ with $2\le y<z$ define $\eta,u,\beta$ and $\xi$
by \be\label{def1}z=e^\eta y=y^{1+u},\quad\eta=(\log
y)^{-\beta},\quad\beta=\log4-1+\frac\xi{\sqrt{\log\log y}}.\ee
Furthermore, put
$$z_0(y)=y\exp\{(\log y)^{-\log 4+1}\}\approx y+y(\log y)^{-\log4+1},$$
$$G(\beta)=\begin{cases}\displaystyle\frac{1+\beta}{\log2}\log\Bigl(\frac{1+\beta}{e\log 2}\Bigr)+1&0\le\beta\le\log4-1,\cr
\beta&\log4-1\le\beta,\end{cases}$$ and
$$\delta=1-\frac{1+\log\log2}{\log2}=0.086071\dots$$ Lastly, here and for the rest of this paper
the notation $f\asymp g$ means that $f\ll g$ and $g\ll f$.
Constants implied by $\ll$, $\gg$ and $\asymp$ are absolute unless
otherwise specified, e.g. by a subscript.

\renewcommand{\labelenumi}{(\alph{enumi})}

\begin{theorem}[Ford \cite{kf}]\label{thm1} Let $x>100000$ and
$100\le y\le z-1$ with $z\le x$.
\begin{enumerate}\item  If $y\le\sqrt{x}$, then
$$\frac{H(x,y,z)}x\asymp\begin{cases}\log(z/y)=\eta&y+1\le z\le z_0(y),\cr
\displaystyle\frac{\beta}{\max\{1,-\xi\}(\log
y)^{G(\beta)}}&z_0(y)\le z\le 2y,\cr \displaystyle
u^\delta\Bigl(\log\frac2u\Bigr)^{-3/2}&2y\le z\le y^2,\cr 1&z\ge
y^2.\end{cases}$$
\item If $y>\sqrt{x}$, then
$$H(x,y,z)\asymp\begin{cases}\displaystyle H\Bigl(x,\frac xz,\frac xy\Bigr)&\frac xy\ge\frac
xz+1,\cr \eta x&\text{else}.\end{cases}$$
\end{enumerate}
\end{theorem}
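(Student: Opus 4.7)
The plan is to treat the two parts separately, using the divisor duality to reduce (b) to (a). Specifically, for part (b), the map $d \mapsto n/d$ sends a divisor $d \in (y,z]$ of $n \le x$ to a divisor $n/d$ lying in an interval close to $(x/z, x/y]$; careful accounting for the distinction between $n/d$ and $x/d$ turns this into an identity of the shape $H(x,y,z) \asymp H(x, x/z, x/y)$ whenever the dual interval is nontrivial, i.e. $x/y \ge x/z + 1$. In the remaining case the dual interval has length $<1$, so any surviving $n$ must itself be a prime multiple in $(y,z]$; Brun--Titchmarsh and a sieve upper bound give $\eta x$ on both sides.

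For part (a), I would split into the four $z$-regimes separately. The extreme cases are the easiest. When $y+1 \le z \le z_0(y)$ almost every integer with a divisor in $(y,z]$ has exactly one such divisor, so the trivial double-counting bound $\sum_{y<d\le z} \lfloor x/d \rfloor \asymp x \log(z/y)$ is essentially tight: the upper bound is the sum, and the lower bound follows because most $d \in (y,z]$ are not too smooth, so the $\lfloor x/d\rfloor$ multiples of $d$ are mostly distinct. When $z \ge y^2$ every $n \le x$ with more than one medium prime factor contributes, and an Erd\H os--Kac argument gives density $1$.

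The substantive ranges are $z_0(y) \le z \le 2y$ and $2y \le z \le y^2$. Here I would implement Ford's random-walk model: parametrize each $n \le x$ by $\omega = (\log p_1/\log y, \log p_2/\log y, \dots)$, where $p_1 < p_2 < \cdots$ are the prime factors of $n$, and note that $n$ has a divisor in $(y,z]$ iff some subset sum of the $\omega_i$ lies in $(1, 1+\eta/\log y]$. For the upper bound I would restrict by a Rankin-type truncation to integers whose sequence of prime factors is ``balanced'' (controlled by $\tau(n)$ or by Hall--Tenenbaum type moments), and then estimate the probability of the subset-sum event using a first-moment / entropy argument; the exponent $\delta$ appears as the unique solution of an entropy extremal problem involving $(1+\log\log 2)/\log 2$, and the $(\log 2/u)^{-3/2}$ factor from a local central limit correction to the subset-sum distribution. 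For the lower bound I would exhibit an explicit family: integers $n = p_1 \cdots p_k m$ with prescribed logarithmic sizes of $p_i$ and $m$ coprime to $\prod p_i$, chosen so that a positive proportion of the $2^k$ subset sums hit the target window; counting these by standard sieve / Mertens estimates yields the matching lower bound.

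The main obstacle will be the narrow transition regime $z_0(y) \le z \le 2y$, where the parameter $\xi$ controls a sharp phase change and the bound depends sensitively on $G(\beta)$. Here both halves of the argument must be carried out with Gaussian precision: the relevant subset-sum random walk must be conditioned to first cross the barrier $1$ inside a window of length $\eta/\log y$, which requires a ballot-type / Sparre Andersen estimate rather than a crude moment bound. A secondary technical hurdle is the handling of the ``bad'' integers with many small or repeated prime factors; these must be removed by a preliminary sieve (à la Tenenbaum) so that the random-walk heuristic becomes rigorous on the surviving set. Once these two ingredients are in place, the four cases of (a) assemble into the stated asymptotic.
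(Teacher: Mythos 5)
This statement is not proved in the paper at all: it is Ford's main theorem from \cite{kf}, quoted verbatim as background (the paper only ever uses it, e.g.\ through Lemma \ref{prl8} and Lemma \ref{cor1}, which are themselves imported from \cite{kf}). So there is no internal proof to compare against, and your proposal has to be judged as a purported proof of Ford's theorem itself. As such it is an outline of the right architecture --- trivial double counting in the outermost ranges, duality for $y>\sqrt{x}$, and a random-walk/order-statistics model for the prime factors in the two middle ranges --- but the decisive steps are asserted rather than supplied. In the ranges $z_0(y)\le z\le 2y$ and $2y\le z\le y^2$, everything you invoke (``first-moment/entropy argument'', ``local central limit correction'', ``ballot-type/Sparre Andersen estimate'') is exactly the content that occupies most of Ford's Annals paper: the reduction of $H(x,y,z)$ to the sum $\sum_{a\le y^\epsilon}L(a;\eta)/a$ (the analogue of Lemma \ref{prl8} here) and then matching upper and lower bounds for that sum via the order statistics of the logarithms of the prime factors. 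The exponent $G(\beta)$, the factor $\max\{1,-\xi\}$, and the exponent $-3/2$ cannot be extracted from what you wrote; as it stands the plan defers the entire difficulty to machinery you have not constructed, so it is a research programme, not a proof.

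There are also local inaccuracies worth flagging. For $z\ge y^2$ you claim ``density $1$'': that is false (integers with no prime factor in $(y,z]$ have positive density whenever $z=y^{O(1)}$), and also unnecessary --- one only needs a positive proportion, which follows from Mertens since $\sum_{y<p\le y^2}1/p\gg1$. For $y+1\le z\le z_0(y)$ the lower bound does not follow from the remark that the multiples of the various $d$ are ``mostly distinct''; one needs a second-moment (Cauchy--Schwarz) argument controlling $\sum_{y<d_1<d_2\le z}x/[d_1,d_2]$ against $\eta x$, which is where the threshold $z_0(y)$ actually enters. And in part (b) the map $d\mapsto n/d$ sends a divisor in $(y,z]$ to one in $(n/z,n/y]$, not $(x/z,x/y]$; making the duality rigorous requires localizing $n$ (say dyadically near $x$) and a continuity statement of the type of Lemma \ref{cor1}, precisely the ``careful accounting'' you wave at. None of these is fatal to the strategy --- it is, after all, Ford's strategy --- but the proposal as written does not establish the theorem.
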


When the interval $(y,z]$ is relatively short, Tenenbaum established
an asymptotic formula for $H(x,y,z)$.

\begin{theorem}[Tenenbaum \cite{ten}]\label{thm1b} If $z\le\sqrt{x}$ and $\xi\to\infty$, then
$$H(x,y,z)\sim\eta x\quad(y\to\infty,\;z-y\to\infty).$$
\end{theorem}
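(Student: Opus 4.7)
The proof splits into upper and lower bounds.

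\textbf{Upper bound.} This is immediate from
$$H(x,y,z)\le\sum_{y<d\le z}\lfloor x/d\rfloor=\eta x+O(x/y+z).$$
Since $z-y\to\infty$ forces $\eta y\to\infty$ (so $x/y=o(\eta x)$), and $z\le\sqrt{x}$ combined with $\eta x\ge\eta y\to\infty$ gives $z=o(\eta x)$, we obtain $H(x,y,z)\le(1+o(1))\eta x$.

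\textbf{Lower bound via Cauchy--Schwarz.} Write $\tau(n):=\#\{d\mid n:y<d\le z\}$ and $M_k:=\sum_{n\le x}\tau(n)^k$. Cauchy--Schwarz gives
$$H(x,y,z)=\sum_{n\le x}\mathbf 1_{\tau(n)\ge 1}\ge\frac{M_1^{\,2}}{M_2}.$$
Since $M_1=(1+o(1))\eta x$, it suffices to establish the second-moment bound $M_2=(1+o(1))\eta x$; equivalently,
$$M_2-M_1=2\!\!\sum_{y<d_1<d_2\le z}\!\!\lfloor x/\operatorname{lcm}(d_1,d_2)\rfloor=o(\eta x).$$
Parametrize each pair by $d_1=ga$, $d_2=gb$ with $g=\gcd(d_1,d_2)$, $\gcd(a,b)=1$, and $a<b\le ae^\eta$, so that $\operatorname{lcm}(d_1,d_2)=gab$ and $g$ ranges over $(y/a,z/b]$. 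Executing the $g$-sum by Mertens reduces the off-diagonal to the Farey-type sum
$$\asymp x\!\!\sum_{\substack{a<b,\,\gcd(a,b)=1\\ 1<b/a\le e^{\eta}}}\!\!\frac{\eta-\log(b/a)}{ab},$$
and setting $b=a+r$, then summing via the coprime density $\phi(r)/r$, yields a bound of order $\eta^{2}x\log(y\eta)$.

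\textbf{Main obstacle.} The naive estimate $\eta^{2}\log(y\eta)=o(\eta)$ requires $\beta>1$, whereas the hypothesis $\xi\to\infty$ only forces $\beta$ strictly above the Erd\H os--Ford threshold $\log 4-1\approx 0.386$. In the residual range $\log 4-1<\beta\le 1$ the off-diagonal can actually exceed $\eta x$, so Cauchy--Schwarz alone is insufficient; one must restrict the second-moment calculation to a ``typical'' subset of integers---for instance, those obeying an Erd\H os--Kac/Selberg--Sathe normal law---on which divisors in the short interval $(y,z]$ provably do not cluster, forcing $\tau(n,y,z)\in\{0,1\}$ for almost all such $n$. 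The hypothesis $z\le\sqrt{x}$ makes the constraint $\operatorname{lcm}(d_1,d_2)\le x$ automatic, and $\xi\to\infty$ is precisely what pushes $\beta$ past the propinquity threshold and guarantees the near-uniqueness. Combining this structural refinement with the Cauchy--Schwarz skeleton is the technical core of Tenenbaum's argument in \cite{ten}.
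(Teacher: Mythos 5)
Your upper bound is fine, and your heuristic for the off-diagonal term is accurate: the plain second moment contains a contribution of order $\eta^2x\log(y\eta)\asymp\eta x(\log y)^{1-\beta}$, which swamps $\eta x$ whenever $\log 4-1<\beta\le 1$, a range the hypothesis $\xi\to\infty$ does not exclude. But having diagnosed this, you stop exactly where the proof begins. The entire content of Theorem \ref{thm1b} beyond the trivial bounds is the lower bound in that residual range, and your final paragraph only names the needed idea (``restrict to a typical subset \dots on which divisors do not cluster'') without carrying out either of the two estimates it requires: (i) that the restricted first moment is still $\sim\eta x$, i.e.\ that integers with $\Omega(n;y)>2\log\log y+\psi(y)\sqrt{\log\log y}$ contribute $o(\eta x)$ to $\sum_{n\le x}\tau(n,y,z)$, and (ii) that for the remaining $n$ the weighted pair count $\sum_{n\le x}\chi(n)\,\#\{y<d_1<d_2\le z:\,[d_1,d_2]\mid n\}$ is $o(\eta x)$. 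Step (ii) is the technical core: it is proved not by Erd\H os--Kac as a black box but by inserting weights $v^{\Omega(n;y)}$ with $v<1$, optimizing $v$, and extracting a saving of the shape $\exp\{-(1-\log 2)\xi\sqrt{\log\log y}\}$ --- this is precisely where $\xi\to\infty$ enters quantitatively, and without it the claim that divisors ``provably do not cluster'' is an assertion, not a proof. As it stands, your Cauchy--Schwarz skeleton proves only $H(x,y,z)\gg\eta x(\log y)^{\beta-1}$ for $\beta<1$, so the proposal has a genuine gap.

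For comparison: the paper does not prove Theorem \ref{thm1b} at all (it is quoted from Tenenbaum), but the missing argument is carried out, in the harder shifted-prime setting, in the proof of part (d) of Theorem \ref{thmsmall1}. There the indicator $\chi$ restricts to $\Omega(n;y)\le 2\log\log y+\psi(y)\sqrt{\log\log y}$, the sum $S$ handles point (i) above via Lemma \ref{l2b} with the choice $v=L(y)/(2\log\log y)$, and the sum $S'$ handles point (ii) via Lemma \ref{l2} and the choice $v=1/2$, ending with the bound $S'\ll \eta x(\log\log y)^3 e^{-(1-\log 2)\xi\sqrt{\log\log y}}/\log x$ after taking $\psi(y)=\min\{\xi,(\log\log y)^{1/6}\}$. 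If you specialize that computation (dropping the primality condition and the $1/\log x$ factors), you obtain exactly the estimates your sketch is missing; alternatively, note that the structure is $H\ge M_1-S-S'$ rather than $M_1^2/M_2$, which avoids needing an asymptotic for the full second moment.
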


A natural generalization of $H(x,y,z)$ arises from restricting the
range of $n$ to be some subset of the natural numbers $\mathscr{A}$.
To this end we define
$$H(x,y,z;\mathscr{A})=\lvert\{n\in[0,x]\cap\mathscr{A}:\exists
d|n\;\text{with}\;y<d\le z\}\rvert.$$ If $\mathscr{A}$ is reasonably
well-distributed in arithmetic progressions, then a simple heuristic
shows that we should have
$$H(x,y,z;\mathscr{A})\approx\frac{|\mathscr{A}\cap[0,x]|}xH(x,y,z).$$
In the case that $\mathscr{A}$ is an arithmetic progression Ford,
Khan, Shparlinski and Yankov~\cite{fksy} obtained upper bounds on
$H(x,y,z;\mathscr{A})$. In the present paper we focus on the special
and important case when $\mathscr{A}=P_s:=\{p+s:p\;\text{prime}\}$
for fixed $s\neq 0$. It is well-known that $P_s$ is well-distributed
in arithmetic progressions $a\pmod q$ with $(a-s,q)=1$. Making this
precise using sieving arguments and combining it with the methods
developed in \cite{kf}\;can lead to bounds on $\hs$ of the expected
order of magnitude. The upper bounds were settled by Ford in
\cite{kf}. We state below a short interval version of Theorem 6 in
\cite{kf}; for a proof of it see the proofs of Theorem 6 and Lemma
6.1 in \cite{kf}.

\begin{theorem}[Ford \cite{kf}]\label{thm2} Fix $s\in\SZ\setminus\{0\}$. Let $2\le y\le\sqrt{x}$, $y+1\le z\le
x$ and $x(\log z)^{-10}\le\Delta\le x$. Then
$$\hsd\ll_s\begin{cases}\displaystyle\frac\Delta x\frac{H(x,y,z)}{\log x}&z\ge
y+(\log y)^{2/3},\cr \cr \displaystyle\frac\Delta{\log
x}\sum_{\substack{y<d\le z\\(d,s)=1}}\frac1{\phi(d)}&z\le y+(\log
y)^{2/3}.\end{cases}$$
\end{theorem}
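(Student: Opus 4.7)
The plan is to adapt the proofs of Theorem 6 and Lemma 6.1 of \cite{kf} step by step to the short-interval setting, replacing every full-range prime-counting input by its short-interval analogue. The hypothesis $\Delta\ge x(\log z)^{-10}$ is calibrated precisely so that this passage incurs no loss.

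The narrow-divisor range $z\le y+(\log y)^{2/3}$ is essentially immediate. Here any $n\le x$ has at most $O(1)$ divisors in the very short interval $(y,z]$, so a union bound gives
$$\hsd \;\le\; \sum_{\substack{y<d\le z\\(d,s)=1}}\bigl(\pi(x-s;d,-s)-\pi(x-\Delta-s;d,-s)\bigr).$$
Since $y\le\sqrt x$ and $z-y\ll(\log x)^{2/3}$, every $d\le z$ satisfies $d\le\sqrt x(1+o(1))$, and the hypothesis on $\Delta$ forces $\log(\Delta/d)\gg\log x$. Brun--Titchmarsh in short intervals then bounds each inner count by $O(\Delta/(\phi(d)\log x))$, and summation over $d$ produces the second estimate.

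For the wider range $z\ge y+(\log y)^{2/3}$ I would replay the anatomical-sieve machinery of \cite{kf} in full. Ford decomposes the set of $n\le x$ with a divisor in $(y,z]$ according to the prime factorizations of its elements and bounds each piece, after an upper-bound sieve, by a sum of quantities of the shape $\pi(x;d,-s)$ over a controlled family of moduli $d$. Substituting $\pi(x-s;d,-s)-\pi(x-\Delta-s;d,-s)$ for $\pi(x;d,-s)$ throughout, and invoking a short-interval Bombieri--Vinogradov theorem at each step in place of the classical one, the argument goes through unchanged; each sieve estimate acquires an extra factor $\Delta/x$ and produces in aggregate the first bound.

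The principal obstacle is verifying that the short-interval Bombieri--Vinogradov estimate is applicable at every scale appearing in Ford's multi-scale decomposition; the exponent $10$ in the hypothesis on $\Delta$ is chosen so that the classical level of distribution $x^{1/2}(\log x)^{-A}$ comfortably exceeds each modulus that enters the sieve, with margin enough to absorb the loss from shortening the interval.
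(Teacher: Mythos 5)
The paper does not actually prove this statement: it is quoted from Ford, with the reader directed to the proofs of Theorem 6 and Lemma 6.1 of \cite{kf} for the (routine) short-interval modifications. So at the level of strategy your plan coincides with the paper's. Your treatment of the narrow range $z\le y+(\log y)^{2/3}$ is essentially the intended one: a union bound over the moduli $d\in(y,z]$ with $(d,s)=1$ followed by the short-interval Brun--Titchmarsh inequality (Lemma \ref{prl4} here), the hypothesis $\Delta\ge x(\log z)^{-10}$ and $d\ll\sqrt{x}$ guaranteeing $\log(2\Delta/d)\gg\log x$. Two small remarks: the claim that every $n\le x$ has $O(1)$ divisors in $(y,z]$ is false in general, but it is also not needed for an upper bound; and the union bound restricted to $(d,s)=1$ is not literally an upper bound for $\hsd$ --- a shifted prime $p+s$ all of whose divisors in $(y,z]$ share a factor with $s$ forces $p\mid s$, so one must add an $O_s(1)$ term and absorb it.

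The substantive problem is in the wide range, where the one concrete mechanism you specify is not the right one. You claim the adaptation consists of ``invoking a short-interval Bombieri--Vinogradov theorem at each step'' and that the exponent $10$ is calibrated against the level of distribution $x^{1/2}(\log x)^{-A}$. For an \emph{upper} bound no mean-value theorem over moduli is needed or used: Ford's proof of Theorem 6 runs on individual-modulus upper estimates of Brun--Titchmarsh/Montgomery--Vaughan type (in this paper the corresponding short-interval tools are Lemma \ref{prl4} and the Nair--Tenenbaum bound of Lemma \ref{l2}), fed into the $L(a;\eta)$ machinery; Bombieri--Vinogradov-type input (Lemma \ref{prl5}) is required only for the \emph{lower} bounds, as in Theorem \ref{imvl1}. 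Moreover your level-of-distribution justification cannot cover the stated range: $z$, and hence the divisors $d\in(y,z]$, may be as large as $x$, far beyond $x^{1/2}$, so a Bombieri--Vinogradov estimate simply does not apply to those moduli; Ford's argument copes with this by exploiting the co-divisor $n/d\le x/y$ and keeping the sieve moduli below $x^{1/2+o(1)}$. The true role of the hypothesis $\Delta\ge x(\log z)^{-10}$ is to ensure $\log(2\Delta/q)\asymp\log x$ for every modulus $q$ that occurs, so that Brun--Titchmarsh on $(x-\Delta,x]$ loses only a constant --- not to beat a level of distribution. As it stands, the wide-range portion of your proposal is a plan deferring to \cite{kf} (which is what the paper itself does), and the adaptation it describes, taken literally, would not go through.
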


\begin{rk} The reason that the upper bound in Theorem
\ref{thm2}\;has this particular shape is due to our incomplete
knowledge about the sum $\sum_{y<d\le z}\frac1{\phi(d)}$ when the
interval $(y,z]$ is very short. The main theorem in
\cite{Sita}\;implies that
$$\sum_{y<d\le
z}\frac1{\phi(d)}\asymp\log(z/y)\quad(z\ge y+(\log y)^{2/3}),$$
whereas standard conjectures on Weyl sums would yield that
\be\label{e0}\sum_{y<d\le z}\frac1{\phi(d)}\asymp\log(z/y)\quad(z\ge
y+\log\log y).\ee The range of $y$ and $z$ in \eqref{e0}\;is the
best possible one can hope for, since it is well-known that the
order of $n/\phi(n)$ can be as large as $\log\log n$ if $n$ has many
small prime factors.
\end{rk}

In general, lower bounds on $H(x,y,z;P_s)$ are more difficult
because they rely on more precise knowledge about the distribution
of primes in arithmetic progressions, which is a notoriously
difficult problem. A special case was worked out by Ford.

\begin{theorem}[Ford \cite{kf}]\label{thm3} For fixed $s,a,b$ with $s\in\SZ\setminus\{0\}$
and $0\le a<b\le1$ we have $$H(x,x^a,x^b;P_s)\gg_{s,a,b}\frac x{\log
x}.$$
\end{theorem}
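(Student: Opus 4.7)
The plan is to apply a second-moment argument to the count $\omega_{\mathcal{Q}}(p+s)$, where $\mathcal{Q}$ is the set of primes $q$ with $x^a<q\le x^b$, using the Bombieri--Vinogradov theorem for the first moment and the Brun--Titchmarsh inequality for the second. Before doing so I would reduce to the regime $0<a<b<1/2$ via a short case analysis. The case $a=0$ is handled by fixing any prime $q_0$ coprime to $s$: Siegel--Walfisz yields $\pi(x;q_0,-s)\gg x/\log x$ primes $p$ with $q_0\mid p+s$, each producing the divisor $q_0\in(1,x^b]$. The case $b=1$ is trivial since $p+s$ itself is a divisor of $p+s$ in $(x^a,x]$ for all but $O_a(x^a/\log x)$ primes $p\le x-s$. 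Otherwise $0<a<b<1$: if $b\ge 1/2$ and $a<1/2$, I replace $b$ by $b'=1/2-\varepsilon$ with $a<b'$ and apply monotonicity; if $a\ge 1/2$, I use the complementary-divisor trick, restricting to primes $p$ with $p+s\in(x/2,x]$ and looking instead for divisors $d'$ of $p+s$ in $(x^{1-b},x^{1-a}/2]$, for then $d=(p+s)/d'$ automatically lies in $(x^a,x^b)$. After (if necessary) a further shrinking near the boundary $a=1/2$, this reduces to the main regime.

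In the main regime $0<a<b<1/2$, the Cauchy--Schwarz inequality gives
\[ H(x,x^a,x^b;P_s)\ge \bigl|\{p\le x-s:\omega_{\mathcal{Q}}(p+s)\ge 1\}\bigr|\ge \frac{S_1^2}{S_2}, \]
where $S_1=\sum_p\omega_{\mathcal{Q}}(p+s)=\sum_{q\in\mathcal{Q}}\pi(x;q,-s)$ and $S_2=\sum_p\omega_{\mathcal{Q}}(p+s)^2$, sums over primes $p\le x-s$. Since $b<1/2$, Bombieri--Vinogradov together with Mertens' theorem yields $S_1=\mathrm{li}(x)\log(b/a)+o(x/\log x)\asymp_{a,b} x/\log x$. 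Expanding the square gives $S_2=S_1+\sum_{q_1\ne q_2\in\mathcal{Q}}\pi(x;q_1q_2,-s)$; since $q_1q_2\le x^{2b}$ with $2b<1$, Brun--Titchmarsh produces $\pi(x;q_1q_2,-s)\ll_b x/((q_1-1)(q_2-1)\log x)$, and summing via Mertens gives $S_2\ll (x/\log x)(\log(b/a))^2$. The ratio $S_1^2/S_2$ is therefore $\gg_{a,b} x/\log x$, as required.

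The main difficulty is not any one analytic estimate---all three tools are classical---but rather the bookkeeping of the reduction. One must ensure that after complementing and/or shrinking, the new parameters satisfy $b'<1/2-\delta$ for some fixed $\delta>0$, so that Bombieri--Vinogradov is effective (its moduli range up to $x^{1/2}/\log^B x$) and the $1/\log x$ saving in Brun--Titchmarsh is preserved (this requires $2b'<1$ with room to spare). The sharpest case is $a=1/2$, where both a complement and a shrinking are needed simultaneously in order to retain this margin.
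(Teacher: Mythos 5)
This statement is quoted from Ford's paper \cite{kf} and is not proved anywhere in the present paper, so there is no in-paper argument to compare against; judged on its own terms, your proof is correct and is essentially the standard (and essentially Ford's) argument. The core second-moment computation is sound: for $0<a<b<1/2$ the Bombieri--Vinogradov theorem gives $S_1=\sum_{x^a<q\le x^b}\pi(x;q,-s)\asymp_{a,b}x/\log x$ (coprimality $(q,s)=1$ being automatic for large $x$), Brun--Titchmarsh with $q_1q_2\le x^{2b}\le x^{1-\delta}$ controls the off-diagonal of $S_2$, and Cauchy--Schwarz yields $\gg_{a,b,s} x/\log x$ shifted primes with a prime divisor in the interval. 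Your reductions are also in order: the cases $a=0$ and $b=1$ are trivial as you say; shrinking $b$ uses the monotonicity of $H$ in $z$; and when $a\ge1/2$ the complementary-divisor step is valid because $p+s>x/2$ and $x^{1-b}<d'\le x^{1-a}/2$ force $(p+s)/d'\in(x^a,x^b)$, after which one restricts to prime divisors $q\in(x^{1-b},x^{b'}]$ with $1-b<b'<\min\{1-a,1/2\}$ (or $b'<1/2$ when $a=1/2$), the nonemptiness of this range following from $b>a\ge1/2$. The only bookkeeping worth writing out in a final version is that the moments in the complemented case are taken over $p+s\in(x/2,x]$, so Bombieri--Vinogradov must be applied at both endpoints $x$ and $x/2$; this is routine and does not affect the conclusion.
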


The purpose of this paper is to provide lower bounds on
$H(x,y,z;P_s)$ in a broader range of the parameters $y$ and $z$. We
split our results according to the range of the parameter
$\eta=\log(z/y)$. For small values of $\eta$ lower bounds on
$H(x,y,z;P_s)$ depend heavily on inequalities of the form
\be\label{wgrh}\pi(x;q,a)\ge\frac{cx}{\phi(q)\log
x}\quad\text{for}\quad(a,q)=1\ee for some $c>0$, uniformly in some
range of $q$ with a possible `small' exceptional set, namely reverse
Brun-Titchmarsh inequalities. Such results have been proven by
Alford, Granville and Pomerance \cite{carnum}\;and
Harman\;\cite{Harman}. Also, Bombieri, Friedlander and Iwaniec
proved in \cite{bfi1}\;an asymptotic formula for
$$\sum_{\substack{q\le Q\\(q,a)=1}}\pi(x;q,a),$$ when $Q\le x^{1-\epsilon}$ and $a$ is fixed.
Combining these results with the arguments leading to Theorem
\ref{thm1b}\;we show the following theorem. Here and for the rest of
this paper $x_0(\cdot)$ denotes a sufficiently large positive
constant which depends only on the parameters given, e.g. $x_0(s)$,
and its meaning might change from statement to statement.

\begin{theorem}[Small values of $\eta$]\label{thmsmall1} Fix
$s\in\SZ\setminus\{0\}$. Let $3\le y+1\le z\le x$ with
$y\le\sqrt{x}$ and $\{y<d\le z:(d,s)=1\}\neq\emptyset$.
\begin{enumerate}
\item Let $\epsilon>0$. If $x\ge x_0(s,\epsilon)$, $z\le x^{5/12-\epsilon}$ and
$$y+\log\log y\le z\le y+\frac y{(\log y)^2},$$ then
\be\label{thsm}\hs\gg\begin{cases}\displaystyle \frac{H(x,y,z)}{\log
x}& z\ge y+(\log y)^{2/3},\cr \cr \displaystyle\frac x{\log
x}\sum_{\substack{y<d\le z\\(d,s)=1}}\frac1{\phi(d)}&z\le y+(\log
y)^{2/3}\end{cases}\ee with the implied constant depending on $s$
and $\epsilon$. If, in addition, $(z-y)/\log\log y\to\infty$ as
$y\to\infty$, then
$$\hs\sim_{\epsilon,s}\begin{cases}\displaystyle f(s)\frac{315\zeta(3)}{2\pi^4}\frac{\eta x}{\log
x}&\text{if}\;\displaystyle\frac{z-y}{(\log y)^{2/3}}\to\infty,\cr
\cr \displaystyle\frac x{\log x}\sum_{\substack{y<d\le
z\\(d,s)=1}}\frac1{\phi(d)}&\text{otherwise},\end{cases}$$ as
$y\to\infty$, where $f(s)=\prod_{p|s}\frac{(p-1)^2}{p^2-p+1}$.
\item If $x\ge x_0(s)$, $z\le x^{0.472}$ and $$y+\exp\{4.532(\log
y)^{1/4}\}\le z\le y+\frac y{(\log y)^2},$$ then \eqref{thsm}\;holds
with the implied constant depending on $s$.
\item If \eqref{wgrh}\;holds for some $c>0$, uniformly in $q\le Q$ for some $Q=Q(x)\le\sqrt{x}$, $x\ge x_0(s,c)$ and
$$z\le y+\frac y{(\log y)^2},$$ then \eqref{thsm}\;is valid for $z\le
Q$ with the implied constant depending on $s$ and $c$.
\item Let $B\ge 2$ be fixed. If $$z\ge y+\frac{y}{(\log y)^B}\quad\text{and}\quad\xi\to\infty,$$
then $$\hs\sim_{s,B}f(s)\frac{315\zeta(3)}{2\pi^4}\frac{\eta x}{\log
x}\quad(y\to\infty).$$
\end{enumerate}
\end{theorem}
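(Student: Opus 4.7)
The approach is a second-moment (Turán sieve) argument. For each integer $n$, set
\[
T(n)=\#\{d:y<d\le z,\,(d,s)=1,\,d\mid n\},
\]
so that $H(x,y,z;P_s)$ equals, up to the $O_s(1)$ primes $p$ dividing $s$, the number of $p\le x$ with $T(p+s)\ge1$. The elementary inequality $T-\mathbf{1}[T\ge1]\le T(T-1)$, evaluated at $T(p+s)$ and summed over $p\le x$, yields
\[
H(x,y,z;P_s)\;\ge\;2S_1-S_2,
\]
where
\[
S_1=\sum_{\substack{y<d\le z\\(d,s)=1}}\pi(x;d,-s),\qquad S_2=\sum_{\substack{y<d_1,d_2\le z\\(d_i,s)=1}}\pi(x;[d_1,d_2],-s).
\]

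The first step is to lower bound $S_1$ via the prime-distribution input appropriate to each part: Bombieri--Friedlander--Iwaniec for (a), Harman's theorem for (b), hypothesis~\eqref{wgrh} for (c), and the Bombieri--Vinogradov theorem for (d). The upper bounds $z\le x^{5/12-\epsilon}$, $x^{0.472}$, $Q$ in parts (a)--(c) are precisely the largest ranges in which the cited input applies to each $d\in(y,z]$. In every case one obtains
\[
S_1\;\ge\;(c+o(1))\,\frac{x}{\log x}\sum_{\substack{y<d\le z\\(d,s)=1}}\frac{1}{\phi(d)},
\]
with $c=1$ (an honest asymptotic, not just a lower bound) in (a) and (d), since BFI and Bombieri--Vinogradov each supply a true asymptotic for the sums $\pi(x;d,-s)$ on average over $d$.

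The second step is to show $S_2=S_1+o(S_1)$ (for the asymptotic claims) or $S_2\ll S_1$ (for the pure lower-bound claims). Splitting $d_1,d_2$ by $e=\gcd(d_1,d_2)$ and writing $d_i=ea_i$ with $(a_1,a_2)=1$, the relation $e\mid d_1-d_2$ forces $e\le z-y$ off the diagonal, so $[d_1,d_2]=ea_1a_2$; Brun--Titchmarsh then gives
\[
S_2-S_1\;\ll_s\;\frac{x}{\log x}\sum_{e\le z-y}\frac{1}{\phi(e)}\Biggl(\sum_{y/e<a\le z/e}\frac{1}{\phi(a)}\Biggr)^{\!2},
\]
and the hypothesis $z\le y+y/(\log y)^2$ in (a)--(c) is exactly what makes this $o(S_1)$. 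Converting $\sum_{y<d\le z,\,(d,s)=1}1/\phi(d)$ into its closed form $f(s)\cdot 315\zeta(3)/(2\pi^4)\cdot\eta$ invokes Sitaramachandrarao's estimate (available only once $z-y\ge(\log y)^{2/3}$) together with a standard Wirsing-type convolution to identify the density constant; in the shorter range $z-y<(\log y)^{2/3}$ no such conversion is possible, and the lower bound must remain in the $\sum 1/\phi(d)$ form---this is exactly what the second case of~\eqref{thsm} records. For part~(d), where $\xi\to\infty$, Tenenbaum's Theorem~\ref{thm1b} identifies $H(x,y,z)\sim\eta x$, so the two forms of the main term coincide, and Bombieri--Vinogradov is enough because the relevant moduli stay below $x^{1/2-\epsilon}$.

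The principal obstacle is producing an upper bound on $S_2$ that remains strictly smaller than $S_1$ in a window as short as $y+\log\log y\le z\le y+y/(\log y)^2$; it is this competition that forces the upper constraint $z\le y+y/(\log y)^2$ in parts (a)--(c). A secondary delicate point is the lower threshold $z\ge y+\log\log y$ in (a): below it, our control on $\sum_{y<d\le z}1/\phi(d)$ collapses---as the Remark following Theorem~\ref{thm2} explains---and with it the entire second-moment method.
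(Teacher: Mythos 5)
The skeleton you propose (Bonferroni/second moment, gcd-splitting of the pair sum, Brun--Titchmarsh) is indeed the paper's route for parts (a)--(c), but there are two genuine gaps. The fatal one is part (d). Your treatment of the pair sum gives, up to $\log\log$ factors, $S_2-S_1\ll\frac{x}{\log x}\,\eta^2\log(\eta y)$, which is $o(S_1)$ only when $\eta\log(\eta y)\to0$, i.e.\ essentially when $\eta\ll1/\log y$; this is why the constraint $z\le y+y(\log y)^{-2}$ appears in (a)--(c). But part (d) allows $\eta$ up to $(\log y)^{-\log4+1-o(1)}$, where $S_2-S_1$ exceeds $S_1$ by a power of $\log y$ and $2S_1-S_2$ is useless --- the second moment of $\tau(p+s,y,z)$ is dominated by a sparse set of shifted primes with many divisors in $(y,z]$. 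Invoking Theorem \ref{thm1b} only identifies $H(x,y,z)$, which does not occur in the statement of (d) and does nothing for the pair count. The paper's proof of (d) for $z>y+y(\log y)^{-2}$ first restricts to $p+s$ with $\Omega(p+s;y)\le2\log\log y+\psi(y)\sqrt{\log\log y}$ and then controls both the discarded first-moment contribution $S$ and the restricted pair sum $S'$ by weighted estimates of the shape $\sum_{p\equiv-s\,(d)}v^{\Omega((p+s)/d;y)}$ (Lemmas \ref{l2} and \ref{l2b}); the hypothesis $\xi\to\infty$ enters quantitatively there, through a factor $e^{-(1-\log2)\xi\sqrt{\log\log y}}$ in the error, not via Tenenbaum's theorem. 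Moreover Bombieri--Vinogradov is not quite sufficient for the first moment, since $y$ may be as large as $\sqrt x$; the paper uses the Bombieri--Friedlander--Iwaniec result (Lemma \ref{prl5}) together with Lemma \ref{prl7}.

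For parts (a)--(c) there is a second, smaller but real, defect in your step 1: an averaged theorem (BFI or BV) has error $x(\log x)^{-A}$ over the whole modulus range, whereas the main term $\frac{x}{\log x}\sum_{y<d\le z}1/\phi(d)$ can be as small as $\frac{x\log\log y}{y\log x}$, far below that error, so no averaged result can give (a) in intervals of length $\log\log y$. One must use the pointwise results with small exceptional sets (Lemma \ref{prl1} from Alford--Granville--Pomerance, Lemma \ref{prl3} from Harman), and the heart of the argument --- which your outline omits --- is showing that the excluded moduli $\mathcal{MD}_\epsilon(x)$, resp.\ $\mathcal{ME}(x)$, cannot carry the bulk of $\sum_{y<d\le z}1/\phi(d)$. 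That analysis is precisely where the thresholds $z\ge y+\log\log y$ and $z\ge y+\exp\{4.532(\log y)^{1/4}\}$ originate (a single excluded modulus $d$ with $d/\phi(d)\gg\log\log d$ could dominate the sum when $z-y\ll\log\log y$); your explanation of the $\log\log y$ threshold via the Remark on $\sum1/\phi(d)$ is not the operative mechanism. Finally, your Brun--Titchmarsh bound with a uniform $\log x$ in the denominator is not justified when $y$ is close to $\sqrt x$ (allowed in (c)); the paper retains the factor $\log(2m)$ and exploits it. These last points are repairable; the absence of any workable idea for the pair sum in part (d) is the essential gap.
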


For intermediate and large values of $\eta$ we need results about
primes in arithmetic progressions {\it on average}\;in order to
control error terms coming from the linear sieve. The most famous
such result is the Bombieri-Vinogradov theorem \cite[p. 161]{dav}.
This theorem allows one to get the expected order of $\hs$ for $y\le
x^{1/2-\epsilon}$. To go beyond this threshold we make use of
Theorem 9 in \cite{bfi1}.

\begin{theorem}[Intermediate and large values of $\eta$; short intervals]\label{thminter1}
Fix $s\in\SZ\setminus\{0\}$ and $B\ge2$. Let $x\ge x_0(s,B)$,
$x(\log x)^{-B}\le\Delta\le x$ and $3\le y+1\le z\le x$ with
$\{y<d\le z:(d,s)=1\}\neq\emptyset$, $y\le\sqrt{x}$ and
$$z\ge y+\frac y{(\log y)^B}.$$ Then
$$\hsd\gg_{s,B}\frac\Delta x\frac{H(x,y,z)}{\log
x}.$$
\end{theorem}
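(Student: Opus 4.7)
The plan is to mirror the proof of the upper bound in Theorem \ref{thm2} from \cite[Theorem 6 and Lemma 6.1]{kf}, replacing the Brun--Titchmarsh bound on $\pi(x;d,-s)$ that drives that argument by an honest asymptotic on average over $d$: the Bombieri--Vinogradov theorem for $d \le \sqrt{x}/(\log x)^A$, and Theorem 9 of \cite{bfi1} for $\sqrt{x}/(\log x)^A < d \le x^{1-\epsilon}$. Short-interval versions follow by subtraction at the endpoints $x$ and $x-\Delta$; since $\Delta \ge x(\log x)^{-B}$, the resulting error $x(\log x)^{-A}$ is a factor $(\log x)^{-(A-B)}$ smaller than $\Delta$ and thus negligible for $A$ chosen large in terms of $B$.

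More concretely, Ford's analysis in \cite{kf} identifies a family $\mathcal{D} \subseteq (y,z]$ of divisors (each of the form $p_1 \cdots p_k$ with $p_i$ in prescribed dyadic ranges) and, for each $d \in \mathcal{D}$, a multiplier set $\mathcal{M}_d \subset [1, x/d]$ cut out by coprimality and anatomy conditions, such that $\sum_d |\mathcal{M}_d| \asymp H(x,y,z)$, every $n = dm$ with $d \in \mathcal{D}$, $m \in \mathcal{M}_d$ has a divisor in $(y,z]$, and the average multiplicity of the map $(d,m) \mapsto dm$ is $O(1)$. The shifted-prime analog is then
\[
\sum_{d \in \mathcal{D},\,(d,s)=1} \#\{p : x-\Delta < p+s \le x,\ d \mid p+s,\ (p+s)/d \in \mathcal{M}_d\}.
\]
Applying a lower-bound linear sieve in the variable $m = (p+s)/d$ reduces the inner count, for each fixed $d$, to essentially $(|\mathcal{M}_d|/(x/d)) (\pi(x;d,-s) - \pi(x-\Delta;d,-s))$ plus a sieve remainder. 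Summing over $d \in \mathcal{D}$ and replacing each prime count by its expected value $\Delta/(\phi(d)\log x)$ via BV or \cite[Theorem 9]{bfi1} delivers a main term of the desired shape $\gg \Delta H(x,y,z) / (x \log x)$.

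The principal obstacle will be to verify that Ford's combinatorial construction in \cite{kf} can be packaged as a dimension-one sieve problem whose level of distribution is compatible with BV (or BFI) and whose main term matches the Ford density $|\mathcal{M}_d|/(x/d)$; this is the bulk of the technical work and is what lets the prime-counting step slot in cleanly. A subsidiary issue is the range of divisors: when $y$ is close to $\sqrt{x}$, some $d \in \mathcal{D}$ can exceed $x^{1-\epsilon}$, falling beyond the reach of \cite[Theorem 9]{bfi1}. This is handled by replacing $z$ by $z' = \min(z, x^{1-\epsilon})$, which by Theorem \ref{thm1} preserves $H(x,y,z)$ up to a bounded factor (the truncation is active only when $z \ge y^2$, in which range $H(x,y,z) \asymp x \asymp H(x,y,y^2)$ for $y \le \sqrt{x}$).
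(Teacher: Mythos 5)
Your high-level strategy --- reuse Ford's structural description of integers with a divisor in $(y,z]$, sieve the cofactor of that divisor inside $p+s$, and feed in equidistribution of primes on average via Bombieri--Vinogradov and \cite[Theorem 9]{bfi1} --- is in spirit the strategy of the paper, which reduces $\hsd$ to a sum involving $L(a;\eta)$ and then applies Lemma \ref{prl8}. But the plan has a genuine gap at exactly the point you defer as ``the bulk of the technical work'', and the way you propose to invoke \cite[Theorem 9]{bfi1} would fail. That theorem (Lemma \ref{prl5} here) is neither an individual-modulus statement nor an absolute-value average over all moduli up to $x^{1-\epsilon}$: it bounds $\sum_{r\le R}\bigl\lvert\sum_{q\le Q}E(x;rq,a)\bigr\rvert$ only when the absolute value sits \emph{outside} an unweighted sum over a full range of $q$, and only for $R\le x^{1/10-\epsilon}$. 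In your scheme the relevant moduli are $d\cdot e$, where $d\in\mathcal{D}$ carries Ford's anatomical restrictions (so the $d$-sum is weighted by a non-smooth indicator) and $e$ runs over sieve divisors; for $y$ near $\sqrt{x}$ these moduli exceed the Bombieri--Vinogradov range, are far too large to play the role of $r$, and do not form an unweighted interval sum so they cannot play the role of $q$. Moreover, a lower-bound sieve in $m=(p+s)/d$ cannot produce the density $\lvert\mathcal{M}_d\rvert/(x/d)$ of an anatomically defined set: it only yields densities of sets defined by excluding small primes, so your main term ``$(\lvert\mathcal{M}_d\rvert/(x/d))(\pi(x;d,-s)-\pi(x-\Delta;d,-s))$'' is not what the sieve gives, and the unaddressed sieve remainder is precisely where the difficulty lives. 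The truncation $z'=\min(z,x^{1-\epsilon})$ addresses a side issue, not this one.

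The paper's proof is engineered around these obstacles. Via Lemma \ref{prl8}, the divisor condition is re-encoded by writing $n=aqb_1b_2s_1$ with a \emph{small} squarefree $a\le w=z^{1/20v}$ and a long variable $q$ with $\log(y/q)\in\mathcal{L}(a;\eta)$ and $P^-(q)>w$, the multiplicity being at most $2^{20v}v^2$. The rough conditions on $q$ and on the cofactor are replaced by fundamental-lemma weights $\lambda^{\pm}$ (supported on $c\le z^{1/20}$) and a Rosser--Iwaniec lower-bound sequence $\rho$ (supported on $m\le w^3$), and after a Vitali-type covering of $\mathcal{L}(a;\eta)$ (Lemma \ref{prl9}) the long variable runs over genuine intervals subject only to $(g,2s)=1$. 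The error term then has exactly the bilinear shape required by Lemma \ref{prl5}, with outer variable $r=ams_1c\le z^{1/8}\le x^{1/12}$ carrying the absolute values --- this is also why Theorem \ref{imvl1} restricts to $z\le x^{2/3}$, the case $\eta\ge\frac{\log y}{100}$ being handled separately through Theorem \ref{thm1} --- and Cauchy--Schwarz plus Lemma \ref{prl4} and Lemma \ref{prl7} finish the estimation; the main term is matched to $H(x,y,z)$ through Lemmas \ref{prl7} and \ref{prl8}, not through a claimed density $\lvert\mathcal{M}_d\rvert/(x/d)$. Without the two decisive ideas --- splitting the divisor into a small structured part and a long smoothly-summed part, and shaping the remainder to the specific structure of \cite[Theorem 9]{bfi1} --- your argument does not go through as stated.
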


We may combine Theorems \ref{thm2}\;and \ref{thminter1}\;with an
argument given in \cite{kf}\;to obtain the expected order of $\hs$
in the full range of the parameters $y$ and $z$, when $\eta\ge(\log
y)^{-B}$ for some fixed $B\ge2$.

\begin{theorem}[Intermediate and large values of
$\eta$]\label{thminter2} Fix $s\in\SZ\setminus\{0\}$ and $B\ge2$.
Let $x\ge x_0(s,B)$ and $3\le y+1\le z\le x$ with $\{y<d\le
z:(d,s)=1\}\neq\emptyset$ and
$$z\ge y+\frac y{(\log y)^B}.$$ Then
$$\hs\asymp_{s,B}\frac{H(x,y,z)}{\log
x}.$$
\end{theorem}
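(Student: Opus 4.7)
The plan is to split according to whether $y \le \sqrt{x}$ or $y > \sqrt{x}$, handling the former directly and reducing the latter to it via a duality argument of Ford from \cite{kf}. For $y \le \sqrt{x}$, the hypothesis $z \ge y + y/(\log y)^B$ with $B \ge 2$ forces $z - y \ge (\log y)^{2/3}$ once $y \ge y_0(B)$. Taking $\Delta = x$ in Theorem \ref{thm2} yields the upper bound $\hs \ll_s H(x,y,z)/\log x$, while taking $\Delta = x$ in Theorem \ref{thminter1} (whose hypothesis $x(\log x)^{-B} \le \Delta$ is trivial) gives the matching lower bound. The residual bounded-$y$ case involves only $O_{s,B}(1)$ progressions $-s \pmod d$ with $y < d \le z$ and is handled by Bombieri--Vinogradov.

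For $y > \sqrt{x}$ I would invoke the Ford duality used in the proof of Theorem \ref{thm1}(b): any $n \le x$ has a divisor $d \in (y,z]$ if and only if it has the complementary divisor $m = n/d \in [n/z, n/y) \subset [1, \sqrt{x})$. If $x/y < x/z + 1$, then the interval $(x/z, x/y]$ contains no integer, so for every $n \le x$ the interval $[n/z, n/y)$ has length less than one and each $n$ has at most one divisor in $(y,z]$; inclusion--exclusion collapses, and
\[
\hs = \sum_{\substack{y < d \le z \\ (d,s)=1}} \pi(x;d,-s) \asymp_s \frac{\eta x}{\log x}
\]
by the Bombieri--Friedlander--Iwaniec theorem (Theorem 9 of \cite{bfi1}), matching $H(x,y,z) \asymp \eta x$ from Theorem \ref{thm1}(b). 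If instead $x/y \ge x/z + 1$, I would dyadically partition over $p+s \in (N/2, N]$ with $N = 2^k \le x$; within each block the condition $(y,z] \ni d \mid p+s$ translates, via $m = (p+s)/d$, into a condition that $p+s$ have a divisor $m$ in a dyadic rescaling of the fixed interval $(x/z, x/y] \subset [1, \sqrt{x}]$. Since $x/y \le \sqrt{x}$, the already-established $y \le \sqrt{x}$ case applies to each block, and summing over $N$ reproduces $H(x, y, z; P_s) \asymp H(x, x/z, x/y)/\log x \asymp H(x,y,z)/\log x$ by Theorem \ref{thm1}(b).

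The principal difficulty lies in this $y > \sqrt{x}$ reduction: because the complementary-divisor interval $[n/z, n/y)$ depends on $n$, transferring the divisor condition to a fixed dual interval requires a careful dyadic decomposition, and one must verify that divisors whose rescaled partner falls just outside $(x/z, x/y]$ contribute at most $O_{s,B}(H(x,y,z)/\log x)$. The combinatorial framework for this is precisely the one developed by Ford in \cite{kf} for Theorem \ref{thm1}(b); the novelty here is merely to thread the shifted-prime counts through it using Bombieri--Vinogradov (or BFI) together with Theorems \ref{thm2} and \ref{thminter1} at the final step.
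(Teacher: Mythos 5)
Your treatment of the range $y\le\sqrt{x}$ is exactly the paper's: Theorem \ref{thm2} for the upper bound and Theorem \ref{thminter1} with $\Delta=x$ for the lower bound. The gaps are in the range $y>\sqrt{x}$, and they are genuine. First, in your case $x/y<x/z+1$ you evaluate $\sum_{y<d\le z,\,(d,s)=1}\pi(x;d,-s)$ ``by BFI'', but the Bombieri--Friedlander--Iwaniec input (Lemma \ref{prl5}) only controls moduli up to $x(\log x)^{-B(A)}$, while here $z$ may be of size $x$ up to a bounded factor; for such huge moduli no unconditional equidistribution on average is available in that form. Likewise the upper bound cannot be obtained by Brun--Titchmarsh applied to the moduli $d$ themselves: for $d$ near $x$ one only gets $\pi(x;d,-s)\ll x/(\phi(d)\log(2x/d))$, and summing this over $d\in(y,z]$ with $z\asymp x$ gives $\eta x$, not $\eta x/\log x$. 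The paper avoids both problems by passing to the small cofactor $q=(p+s)/d\le x/y$: the sets $A_q=\{p+s\in(qy,qz]:p\equiv -s\ (\mathrm{mod}\ q)\}$ are handled by Brun--Titchmarsh in $q$ for the upper bound, and the lower bound in the regime $\eta\le\log^{-2}(5x/z)$, $z>x^{2/3}$ comes from a Cauchy--Schwarz second-moment argument on $T(p)=|\{x/2z<q\le x/z:(q,s)=1,\ p+s\in A_q\}|$, with $\sum_p T(p)$ evaluated by Siegel--Walfisz when $x/z\le(\log x)^{C+1}$ and by Lemma \ref{prl5} (applied to the moduli $d\le z\le x(\log x)^{-C-1}$, now inside its range) otherwise, and $\sum_p T(p)^2$ bounded by Brun--Titchmarsh. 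None of this machinery appears in your sketch, and it is where the main content of the theorem in this range lies.

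Second, your dyadic reduction in the case $x/y\ge x/z+1$ cannot deliver the lower bound. For $p+s\in(N/2,N]$ the inner sandwich forces the complementary divisor into $[N/z,N/(2y))$, which is empty whenever $z\le 2y$ and, more generally, loses an amount comparable to $\log 2$ of the multiplicative length of the dual interval; since $\eta=\log(z/y)$ may be as small as $(\log y)^{-B}$, the transferred divisor condition is destroyed. The paper instead partitions $(x/\log^2(x/z),x]$ into blocks of multiplicative width $1+O(1/\log^3(x/z))$, so that the slack in the dual endpoints is $o(\eta)$ precisely because this case has been restricted to $\eta\ge\log^{-2}(5x/z)$ (the complementary regime being the second-moment argument above); it then needs Theorems \ref{thm2} and \ref{thminter1} in their short-interval form with $\Delta\asymp x_2/\log^3(x/z)$ --- not just $\Delta=x$ --- together with Theorem \ref{thm1} and Lemma \ref{cor1} to compare the slightly different dual parameters across blocks. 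So the two features you discard (the general $\Delta$ in Theorem \ref{thminter1} and a partition much finer than dyadic) are exactly what make the $y>\sqrt{x}$ reduction work, and your proposed case split (integer in $(x/z,x/y]$ or not) does not align with the split that the argument actually requires (size of $\eta$ versus $\log^{-2}(5x/z)$).
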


Finally, when $\eta$ is very large we are able to establish an
asymptotic formula for $\hs$, similar to the one given for
$H(x,y,z)$ in Theorem 21(iv) of \cite{hall_ten2}.

\begin{theorem}[Very large values of $\eta$]\label{thmlarge} Let
$s\in\SZ\setminus\{0\}$. If $2\le y\le z\le x$, then
$$\hs=\frac x{\log x}\biggl(1+O_s\biggl(\frac{\log y}{\log
z}\biggr)\biggr).$$
\end{theorem}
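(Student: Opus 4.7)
The plan is to establish the two-sided bound separately, with the upper bound following from the prime number theorem and the lower bound from an upper-bound sieve combined with Bombieri--Vinogradov.

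The upper bound is essentially trivial: every $p$ counted by $H(x,y,z;P_s)$ is a prime not exceeding $x$ (up to $O_s(1)$ boundary corrections from $p+s\notin[0,x]$), so by the prime number theorem
$$H(x,y,z;P_s)\le\pi(x)+O_s(1)=\frac{x}{\log x}\Bigl(1+O\Bigl(\frac{1}{\log x}\Bigr)\Bigr).$$
Since $\log y\ge\log 2$ and $\log z\le\log x$, we have $\log y/\log z\ge\log 2/\log x$, so the error $O(1/\log x)$ is $O(\log y/\log z)$ as claimed.

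For the lower bound, let $N$ denote the number of primes $p\le x$ for which $p+s$ has no divisor in $(y,z]$; it suffices to show $N\ll_s(x/\log x)(\log y/\log z)$. The key reduction is that if $p+s$ has no divisor in $(y,z]$, then in particular it has no \emph{prime} factor there, since any such prime would itself be a divisor. Hence
$$N\le\tilde N:=\bigl|\{p\le x\text{ prime}:(p+s,P(y,z))=1\}\bigr|,\qquad P(y,z)=\prod_{y<q\le z,\ q\text{ prime}}q.$$
Set $z':=\min\{z,\sqrt x/(\log x)^B\}$ for a sufficiently large absolute constant $B$, and note that $\tilde N$ only grows when we sieve by fewer primes, so we may restrict attention to sifting primes in $(y,z']$. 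I would then apply a standard upper-bound sieve (Selberg, or Brun's pure sieve) to the set $\mathcal A=\{p+s:p\le x\text{ prime}\}$, omitting the finitely many primes dividing $s$ (which contribute $O_s(1)$ in total). The sieve error has the shape $\sum_{d\le D}|\pi(x;d,-s)-\pi(x)/\phi(d)|$ with $D=\sqrt x/(\log x)^B$, controlled by the Bombieri--Vinogradov theorem as $O(x/(\log x)^2)$ provided $B$ is large enough. Together with the Mertens-type estimate
$$\prod_{y<q\le z'}\Bigl(1-\frac{1}{q-1}\Bigr)\asymp\frac{\log y}{\log z'},$$
which follows from classical Mertens combined with convergence of $\prod_q(1-(q-1)^{-2})$, this yields
$$\tilde N\ll_s\frac{x}{\log x}\cdot\frac{\log y}{\log z'}+\frac{x}{(\log x)^2}.$$

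Two cases then conclude the proof. If $z\le\sqrt x/(\log x)^B$, then $z'=z$ and the right-hand side is $\ll_s(x/\log x)(\log y/\log z)$, since $\log y/\log z\ge\log 2/\log x$ absorbs the $x/(\log x)^2$ error. Otherwise $z>\sqrt x/(\log x)^B$, so $\log z'\asymp\log x\ge\log z$ and hence $\log y/\log z'\ll\log y/\log z$, giving the same bound. The main technical obstacle is the sieve set-up and the invocation of Bombieri--Vinogradov at the correct level; the apparent difficulty that the sieve cannot access primes $q>\sqrt x$ is circumvented by the monotone inequality $\log y/\log x\le\log y/\log z$ valid for $z\le x$, making the loss of those primes harmless.
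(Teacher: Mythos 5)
Your proof is correct in substance, but it takes a genuinely different route from the paper for the key estimate. You and the paper share the same first reduction: pass to the complement and observe that if $p+s$ has no divisor in $(y,z]$ then it has no \emph{prime} factor there, so it suffices to bound $\lvert\{p\le x-s:(p+s,P)=1\}\rvert$ with $P=\prod_{y<q\le z}q$. At that point the paper finishes in one step by applying its Lemma \ref{l2} (the Nair--Tenenbaum/Shiu-type bound for shifted primes) with $q=1$, $h\asymp x$ and $F$ the indicator of integers free of prime factors in $(y,z]$ (which lies in $\mathcal{M}$), together with $\sum_{n\le x}F(n)/n\ll\log x\cdot\frac{\log y}{\log z}$; this needs no equidistribution input and has no level restriction, so it is uniform for $z$ all the way up to $x$. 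You instead run an upper-bound sieve on $\{p+s:p\le x\}$ over the sifting primes in $(y,z']$, control the remainder by Bombieri--Vinogradov, and absorb the truncation at $z'$ via $\log y/\log z'\asymp\log y/\log x\le\log y/\log z$ --- the right observation, and the case analysis and absorption of the $x/(\log x)^2$ error via $\log y/\log z\gg1/\log x$ are fine. Two small technical caveats in your write-up: Brun's \emph{pure} sieve does not reach a sifting range as large as $x^{1/2-o(1)}$, and with Selberg's sieve as usually stated the support of the $\lambda_d$'s must be at most the square root of the level at which you invoke Bombieri--Vinogradov, so you should either truncate the sifting range at a smaller fixed power of $x$ (which costs nothing, by the same monotonicity remark) or use the linear Rosser--Iwaniec upper-bound sieve with weights $|\lambda_d^\pm|\le1$; also the bounded-$x$ regime should be dispatched trivially before invoking BV. These are routine adjustments rather than gaps. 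The trade-off: your argument is self-contained classical sieve theory but imports Bombieri--Vinogradov, whereas the paper's argument is a two-line deduction from a lemma it has already established for other purposes and avoids any appeal to primes in arithmetic progressions on average.
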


\medskip\medskip

\subsection*{Shifted primes in the multiplication table.} A
straightforward application of Theorem \ref{thminter2}\;is to the
multiplication table problem. This problem, which was first posed by
Erd\H os \cite{erd1},\cite{erd2}, is to count the number of distinct
integers of the form $ab$ with $1\le a,b\le N$, namely to estimate
the quantity
$$A(N):=|\{ab:1\le a,b\le N\}|.$$ A related question is to estimate
$$A(N;P_s):=|\{ab\in P_s:1\le a,b\le N\}|,$$ that is how many shifted primes
appear in the multiplication table. The order of $A(N)$ was
determined by Ford in \cite{kf}, where he proved that
$$A(N)\asymp\frac{N^2}{(\log N)^\delta(\log\log N)^{3/2}}.$$ This
follows by the elementary inequalities
$$H\Bigl(\frac{N^2}2,\frac N2,N\Bigr)\le
A(N)\le\sum_{m\ge0}H\Bigl(\frac{N^2}{2^m},\frac N{2^{m+1}},\frac
N{2^m}\Bigr)$$ and Theorem \ref{thm1}. Similarly, using Theorem
\ref{thminter2}\;we establish the order of magnitude of $A(N;P_s)$.

\begin{corollary} If $N\ge N_0(s)$, then
$$A(N;P_s)\asymp_s\frac{A(N)}{\log N}.$$
\end{corollary}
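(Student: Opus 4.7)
The plan is to follow Ford's template for $A(N)$ itself, replacing Theorem \ref{thm1} throughout by Theorem \ref{thminter2}. The starting point is the elementary sandwich
\be\label{plan-sandwich}
H\Bigl(\frac{N^2}{2},\frac{N}{2},N;P_s\Bigr)\le A(N;P_s)\le\sum_{0\le m\le\log_2 N}H\Bigl(\frac{N^2}{2^m},\frac{N}{2^{m+1}},\frac{N}{2^m};P_s\Bigr),
\ee
proved exactly as in the unshifted case: any $p+s=ab$ with $1\le a\le b\le N$ has $a\in(N/2^{m+1},N/2^m]$ for some $m\ge0$ and $p+s\le N^2/2^m$, while conversely any $n\le N^2/2$ with a divisor in $(N/2,N]$ factors as $ab$ with $1\le a,b\le N$.

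At every scale appearing in \eqref{plan-sandwich} one has $z=2y$, which certainly satisfies $z\ge y+y/(\log y)^B$ for any $B\ge 2$, and for $N$ large the interval $(y,z]$ contains integers coprime to $s$. Hence Theorem \ref{thminter2} (with $B=2$, say) applies uniformly to every term, provided $y=N/2^{m+1}\ge x_0(s,2)$, i.e.\ $m\le\log_2 N-C(s)$; since $\log(N^2/2^m)\asymp\log N$ throughout this range, we obtain
\be\label{plan-dyadic}
H\Bigl(\frac{N^2}{2^m},\frac{N}{2^{m+1}},\frac{N}{2^m};P_s\Bigr)\asymp_s\frac{1}{\log N}H\Bigl(\frac{N^2}{2^m},\frac{N}{2^{m+1}},\frac{N}{2^m}\Bigr).
\ee

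The lower bound then comes from applying \eqref{plan-dyadic} with $m=0$ and invoking the $2y\le z\le y^2$ branch of Theorem \ref{thm1} at $u=\log 2/\log(N/2)\asymp 1/\log N$: this yields $H(N^2/2,N/2,N)\asymp N^2/((\log N)^\delta(\log\log N)^{3/2})\asymp A(N)$, so $A(N;P_s)\gg_s A(N)/\log N$. For the upper bound I sum \eqref{plan-dyadic} over $m\le\log_2 N-C(s)$ and absorb the $O_s(1)$ remaining terms via the trivial bound $H(\cdot\,;P_s)\ll_s N^2/(2^m\log N)$, which is easily $o(A(N)/\log N)$. By Ford's analysis underlying the order of $A(N)$, the unshifted sum $\sum_{m\ge0}H(N^2/2^m,N/2^{m+1},N/2^m)$ is $\asymp A(N)$, and dividing by $\log N$ closes the chain.

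The argument is essentially bookkeeping once Theorem \ref{thminter2} is in hand; the only point meriting care is checking that its hypotheses hold uniformly across the dyadic decomposition, and this is automatic because $z=2y$ at every scale and $N^2/2^m\ge N$ throughout.
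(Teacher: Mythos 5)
Your proposal is correct and follows essentially the same route the paper intends: the dyadic sandwich $H(N^2/2,N/2,N;P_s)\le A(N;P_s)\le\sum_m H(N^2/2^m,N/2^{m+1},N/2^m;P_s)$, with Theorem \ref{thminter2} applied at each scale (where $z=2y$ and $\log(N^2/2^m)\asymp\log N$) and Theorem \ref{thm1} plus Ford's evaluation of $A(N)$ to compare with the unshifted counts. Your handling of the finitely many small scales via the trivial bound $\pi(N^2/2^m)\ll N/\log N$ is the right bookkeeping, so the argument is complete.
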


%%%%%%%%%%%%%%%%%%%%%%%%%%%%%%%%%%%%%%%%%%%%%%
%%%%%%%%%%%%%%%%%%%%%%%%%%%%%%%%%%%%%%%%%%%%%%
%%%%%%%%%%%%%%%%%%%%%%%%%%%%%%%%%%%%%%%%%%%%%%
%%%%%%%%%%%%%%%%%%%%%%%%%%%%%%%%%%%%%%%%%%%%%%
%%%%%%%%%%%%%%%%%%%%%%%%%%%%%%%%%%%%%%%%%%%%%%

\section{Background material}

\textbf{Notation.} We make use of some standard notation. If $a(n)$,
$b(n)$ are two arithmetic functions, then we denote with $a*b$ their
Dirichlet convolution. Furthermore, for $n\in\SN$ and $1\le y\le z$
we put $\omega(n;y,z)=|\{p\;{\rm prime}:p|n,\;y<p\le z\}|$ and
$\Omega(n;y,z)=\sum\{a:p^a\|n,\;y<p\le z\}$, where $p^a\|n$ means
that $p^a|n$ and $p^{a+1}\nmid n$. Also, for brevity let
$\omega(n;z)=\omega(n;1,z)$ and $\Omega(n;z)=\Omega(n;1,z)$. For
$n\in\SN$ we use $P^+(n)$ and $P^-(n)$ to denote the largest and
smallest prime factor of $n$, respectively, with the notational
conventions that $P^+(1)=0$ and $P^-(1)=+\infty$. Given $1\le y<z$,
$\mathscr{P}(y,z)$ denotes the set of all integers $n$ such that
$P^+(n)\le z$ and $P^-(n)>y$. In addition, $\pi(x;q,a)$ stands for
the number of primes up to $x$ in the arithmetic progression $a\pmod
q$. Lastly, for a Dirichlet character $\chi$, $N(\sigma,V,\chi)$
denotes the number of zeros $\rho=\beta+i\gamma$ of its associated
$L$-function with $|\gamma|\le V$ and $\beta\ge\sigma$.

\medskip

In this section we state various preliminary results that are needed
in order to prove Theorems \ref{thmsmall1}, \ref{thminter1},
\ref{thminter2}\;and \ref{thmlarge}. First, we list a series of
results on primes in arithmetic progressions. We start with a lemma
which is a direct corollary of Theorem 2.1 in \cite{carnum}.

\begin{lemma}\label{prl1} Let $\epsilon\in(0,1/12)$. There exists $x_\epsilon\ge1$ such that
for every $x\ge x_\epsilon$, there is a set
$\mathcal{D}_\epsilon(x)\subset\SN\cap[\log x,x]$ with
$|\mathcal{D}_\epsilon(x)|\ll_\epsilon1$ such that for every
$(a,q)=1$ with $q\le x^{5/12-\epsilon}$,
$$\Bigl\lvert\pi(x;q,a)-\frac{{\rm
li}(x)}{\phi(q)}\Bigr\rvert\le\epsilon\frac{{\rm li}(x)}{\phi(q)},$$
with the possible exception of
$q\in\mathcal{MD}_\epsilon(x)=\{md:m\in\SN,d\in\mathcal{D}_\epsilon(x)\}$.
\end{lemma}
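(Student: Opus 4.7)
The plan is to derive Lemma \ref{prl1} directly from Theorem 2.1 of \cite{carnum}. That theorem of Alford, Granville and Pomerance asserts that for each fixed $\epsilon \in (0, 1/12)$ there exist constants $B_\epsilon, x_\epsilon \ge 1$ such that for every $x \ge x_\epsilon$ one can produce a family of ``primitive'' exceptional moduli $d_1,\dots,d_k \le x^{5/12-\epsilon}$, with $k \le B_\epsilon$, for which
$$\Bigl\lvert \pi(x;q,a) - \frac{{\rm li}(x)}{\phi(q)} \Bigr\rvert \le \epsilon\, \frac{{\rm li}(x)}{\phi(q)}$$
holds whenever $q \le x^{5/12-\epsilon}$, $(a,q)=1$, and no $d_i$ divides $q$. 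The first step in the proof would simply be to quote this statement.

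The second step would be to verify that the set $\mathcal{D}_\epsilon(x)$ can be taken to lie in $\SN \cap [\log x, x]$, as claimed. Moduli $q \le \log x$ are controlled unconditionally by the Siegel--Walfisz theorem, which for any constant $A$ fixed large in terms of $\epsilon$ gives an error of size $O_A(x(\log x)^{-A})$, comfortably smaller than $\epsilon\,{\rm li}(x)/\phi(q)$ in that range. Consequently, primitive AGP exceptions of size below $\log x$ play no role in the bound we actually need to establish; inspecting the proof of \cite[Thm.~2.1]{carnum} (which proceeds via log-free zero-density estimates together with Siegel's theorem on real zeros of $L$-functions) shows that every candidate exceptional modulus can be arranged to exceed any fixed power of $\log x$. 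Trimming the list $\{d_1,\dots,d_k\}$ to its members lying in $[\log x, x^{5/12-\epsilon}]$ yields the set $\mathcal{D}_\epsilon(x)$ demanded by the lemma, with cardinality still $\ll_\epsilon 1$.

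The main (and only) obstacle is cosmetic bookkeeping: aligning the normalization of the relative error and the precise phrasing of the exceptional set in \cite[Thm.~2.1]{carnum} with the shape stated in Lemma \ref{prl1}. No further analytic input is needed beyond the AGP theorem and Siegel--Walfisz.
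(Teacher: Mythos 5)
Your proposal takes essentially the same route as the paper, which proves Lemma \ref{prl1} simply by quoting Theorem 2.1 of \cite{carnum}, whose statement already supplies a bounded set of exceptional moduli each exceeding $\log x$. Your additional Siegel--Walfisz/trimming discussion is therefore superfluous (and note that trimming alone would not be legitimate reasoning: an exceptional divisor below $\log x$, if one existed, could divide large moduli $q$ not covered by Siegel--Walfisz, so one really does need the fact, built into the AGP statement and its proof, that all exceptional moduli exceed $\log x$).
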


Harman \cite{Harman}, allowing a larger set of exceptional moduli,
gave a variation of Lemma \ref{prl1}. His starting point is the
following result.

\begin{lemma}\label{prl2} Given $\epsilon>0$, there are constants
$K(\epsilon)\ge2$ and $c(\epsilon)>0$ such that if
$K(\epsilon)<q<x^{0.472}$ and for every $d|q$ with $\chi$ a
primitive character $\pmod d$ we have
$$L(\sigma+it,\chi)\neq0\;\;\;\text{for}\;\sigma>1-\frac1{(\log
q)^{3/4}},\;|t|\le\exp\{\epsilon(\log q)^{3/4}\},$$ then for any $a$
with $(a,q)=1$ we have
$$\pi(x;q,a)\ge\frac{c(\epsilon)x}{\phi(q)\log x}.$$
\end{lemma}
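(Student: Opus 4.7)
The plan is to derive the lower bound for $\pi(x;q,a)$ via the truncated explicit formula for $\psi(x;q,a)=\sum_{n\le x,\,n\equiv a\pmod q}\Lambda(n)$, using the hypothesised zero-free rectangle in conjunction with a classical log-free zero-density estimate.

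First I would pass from primes to prime powers at a cost of $O(x^{1/2}\log x)$, which is harmless, and then apply orthogonality of characters together with the truncated Perron formula. For any parameter $T\le x$ this gives
\be
\phi(q)\,\psi(x;q,a)=x-\sum_{\chi\pmod q}\bar\chi(a)\sum_{\substack{L(\rho,\chi^{*})=0\\ |\mathrm{Im}(\rho)|\le T}}\frac{x^{\rho}}{\rho}+R(x,q,T),
\ee
where $\chi^{*}$ is the primitive character inducing $\chi$, and the error $R(x,q,T)\ll\phi(q)\bigl(x\log^{2}(qx)/T+\log q\log x\bigr)$ accounts both for the Perron truncation and for the passage from $\psi(x,\chi)$ to $\psi(x,\chi^{*})$, the latter cost coming from primes dividing $q/d$. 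Taking, say, $T=x^{1/2}$ forces $R(x,q,T)=o(x)$ for $q\le x^{0.472}$.

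Second, I would exploit the zero-free region assumption. Under the hypothesis, every zero $\rho=\beta+i\gamma$ of $L(s,\chi^{*})$ with $|\gamma|\le T_{0}:=\exp\{\epsilon(\log q)^{3/4}\}$ and $\chi^{*}$ primitive mod $d\mid q$ satisfies $\beta\le 1-(\log q)^{-3/4}$, while for $T_{0}<|\gamma|\le T$ one still has the classical Landau--Page zero-free region $\sigma>1-c/\log(qT)$. Combined with a log-free zero-density estimate of Huxley type,
\be
\sum_{d\mid q}\sum_{\substack{\chi\pmod{d}\\ \chi\text{ primitive}}}N(\sigma,T,\chi)\ll(qT)^{\lambda(1-\sigma)}(\log qT)^{A}
\ee
for some exponent $\lambda<1/0.472\approx 2.119$, and an integration by parts in $\sigma$, the combined zero-sum is bounded by
\be
x\,\exp\!\Bigl(-\frac{\log x}{(\log q)^{3/4}}+\lambda(\log q)^{1/4}+O(1)\Bigr),
\ee
the dominant contribution coming from zeros with $|\gamma|\le T_{0}$. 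This is $o(x)$ precisely when $q<x^{1/\lambda}$, and it is here that the constraint $q<x^{0.472}$ enters.

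Combining both ingredients yields $\phi(q)\,\psi(x;q,a)=x(1+o(1))$ uniformly in the permitted ranges, whence $\pi(x;q,a)\ge c(\epsilon)x/(\phi(q)\log x)$ with $c(\epsilon)>0$. The main obstacle is the second step, since the whole argument hinges on having a log-free density estimate with exponent strictly below $1/0.472$; with a weaker density estimate the exponent $0.472$ in the statement would have to be replaced by $1/\lambda$, and the value in the lemma is chosen precisely to match the strongest density estimates then available. The remainder of the work, including the passage from induced characters $\chi\pmod q$ to their primitive inducing characters $\chi^{*}\pmod d$ and the bookkeeping balancing $q$, $T_{0}$ and $x$, is routine.
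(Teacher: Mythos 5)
The paper does not actually prove this lemma: it is quoted verbatim from Harman's paper \cite{Harman}, whose argument is a prime-detecting sieve, not an explicit-formula computation. Your route has a genuine gap precisely at the point you flag as "the main obstacle": the log-free density estimate with exponent $\lambda<1/0.472\approx 2.119$, uniform in $1/2\le\sigma\le 1$, that your second step requires does not exist in the literature. The best known uniform exponent (Ingham--Huxley--Montgomery) is $12/5$, and even the sharper results near the edge (Jutila, Heath-Brown, exponent $2+\epsilon$ for $\sigma\ge 11/14$) leave a middle range, roughly $0.58<\sigma<0.79$, where the exponent is as large as $12/5$. In that range the hypothesis of the lemma gives you nothing (it only excludes zeros with $\beta>1-(\log q)^{-3/4}$, $|\gamma|\le T_0$), so the zero sum is controlled solely by the density estimate, and for $q$ near $x^{0.472}$ one gets contributions of size up to $x^{\sigma}(qT_0)^{(12/5)(1-\sigma)}\approx x^{1+(1-\sigma)(0.472\cdot 12/5-1)}$, which exceeds $x$. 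Your displayed bound $x\exp\{-\log x/(\log q)^{3/4}+\lambda(\log q)^{1/4}+O(1)\}$ only reflects zeros at the edge $\sigma=1-(\log q)^{-3/4}$; the dominant obstruction is hypothetical zeros much deeper in the strip. Your closing claim that $0.472$ "is chosen to match the strongest density estimates then available" is therefore not correct: with the strongest available estimates (exponent $12/5$) your argument yields only $q\le x^{5/12-\epsilon}$, i.e.\ exactly the range of Lemma \ref{prl1} (Alford--Granville--Pomerance), and not the improvement to $0.472$ which is the entire content of Lemma \ref{prl2}.

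Harman reaches $0.472$ by a different mechanism: he decomposes the indicator of primes in the progression via Buchstab iterations (his ``alternative sieve''), reduces to Type I and Type II bilinear sums twisted by the characters mod $q$, and uses the hypothesized zero-free rectangle only to obtain cancellation in the relevant character sums over primes and almost-primes; the exponent $0.472$ then emerges from the numerical limits of the sieve decomposition (where the Type II ranges must be wide enough for the lower-bound terms to stay positive), not from a zero-density theorem with exponent $1/0.472$. So while your first step (orthogonality, truncated explicit formula, trivial treatment of imprimitive characters) is routine and fine, the proof as proposed cannot be completed without either proving an essentially new density theorem or switching to a sieve-theoretic argument of Harman's type.
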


Using Lemma \ref{prl2}\;along with estimates on averages of
$N(\sigma,V,\chi)$ Harman showed a variation of Lemma \ref{prl1}.
The main part of the argument is given in \cite{Harman}, but the
result is not stated explicitly; we state it and prove it here for
the sake of completeness.

\renewcommand{\labelenumi}{(\arabic{enumi})}

\begin{lemma}\label{prl3} There exist absolute positive constants $c_1$, $c_2$
and $x_0$ so that for all $x\ge x_0$ there is a set
$\mathcal{E}(x)\subset\SN\cap[\log x,x]$ satisfying the following:
\begin{enumerate}\item $|\mathcal{E}(x)|\le\exp\{3.641(\log x)^{1/4}\}$;
\item $|\mathcal{E}(x)\cap[1,\exp\{c_1(\log x)^{3/4}\}]|\ll1$;
\item For
every $(a,q)=1$ with $q\le x^{0.472}$ we have
$$\pi(x;q,a)\ge\frac{c_2x}{\phi(q)\log x},$$ with the possible
exception of
$q\in\mathcal{ME}(x)=\{me:m\in\SN,e\in\mathcal{E}(x)\}$.
\end{enumerate}
\end{lemma}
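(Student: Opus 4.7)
The plan is to define $\mathcal{E}(x)$ as the set of moduli that violate the zero-free hypothesis of Lemma \ref{prl2}, then bound its size using a log-free zero-density estimate for Dirichlet $L$-functions. Fix a small $\epsilon>0$, set $T_0=\exp\{C_0(\log x)^{3/4}\}$ and $\sigma_0=1-C_1(\log x)^{-3/4}$ for positive constants $C_0,C_1$ to be chosen, and let $\mathcal{E}(x)$ consist of those $d\in\SN\cap[\log x,x]$ for which some primitive character $\chi\pmod d$ has an $L$-function zero $\rho=\beta+i\gamma$ with $\beta>\sigma_0$ and $|\gamma|\le T_0$. The constants $C_0,C_1$ are chosen small enough so that, for every $q\le x^{0.472}$ with $q\ge K(\epsilon)$, the rectangle $\{\sigma>1-(\log q)^{-3/4},\,|t|\le\exp\{\epsilon(\log q)^{3/4}\}\}$ appearing in Lemma \ref{prl2} is contained in $\{\beta>\sigma_0,\,|\gamma|\le T_0\}$; this is possible thanks to $\log q\le 0.472\log x$.

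To establish property (3), fix $q\le x^{0.472}$ with $(a,q)=1$ and $q\notin\mathcal{ME}(x)$. If $q\le K(\epsilon)$ the bound follows from Siegel--Walfisz. Otherwise I verify the hypothesis of Lemma \ref{prl2}: for each $d|q$ with $d\ge\log x$, the condition $d\notin\mathcal{E}(x)$ (forced by $q\notin\mathcal{ME}(x)$) supplies the needed zero-free region; for each $d|q$ with $d<\log x$, I combine the classical Landau zero-free region (handling complex zeros, after tuning $C_0,C_1$ so the target region lies inside $\sigma>1-c/\log(dT_0)$) with Siegel's theorem (ruling out a real exceptional zero in the target region, at the cost of an ineffective $x_0$).

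For property (1), I substitute $\sigma=\sigma_0$ into a log-free zero-density estimate of the shape
\begin{equation*}
\sum_{d\le x}\sum_{\chi^*\pmod d}N(\sigma,T_0,\chi)\ll(x^2T_0)^{\kappa(1-\sigma)}(\log xT_0)^A.
\end{equation*}
With $1-\sigma_0=C_1(\log x)^{-3/4}$ and $\log T_0=C_0(\log x)^{3/4}$, the right-hand side becomes $\exp\{2\kappa C_1(\log x)^{1/4}+O(\log\log x)\}$. Since each $d\in\mathcal{E}(x)$ contributes at least one zero to this sum, choosing $\kappa,C_0,C_1$ appropriately (Huxley-type density exponents suffice) forces $|\mathcal{E}(x)|\le\exp\{3.641(\log x)^{1/4}\}$. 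For property (2), I restrict to $d\le\exp\{c_1(\log x)^{3/4}\}$ with $c_1$ small relative to $C_0,C_1$: the rectangle $\{\beta>\sigma_0,\,|\gamma|\le T_0\}$ then lies inside the classical Landau zero-free region of $L(s,\chi)$ for every non-exceptional primitive $\chi\pmod d$, and Landau's theorem on exceptional real zeros (at most one such modulus below any given height) yields $|\mathcal{E}(x)\cap[1,\exp\{c_1(\log x)^{3/4}\}]|\ll1$.

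The main obstacle is extracting exactly the numerical constant $3.641$ from the zero-density input: this requires a sharp bound for $N(\sigma,T,\chi)$ close to $\sigma=1$, together with a careful balance of $C_0$ (controlling the height) against $C_1$ (controlling the depth inside the critical strip), which is the delicate numerical optimization carried out in \cite{Harman}.
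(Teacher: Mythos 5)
There is a genuine gap at the heart of your verification of property (3), namely the claimed containment of the rectangle from Lemma \ref{prl2} in your fixed rectangle $\{\beta>\sigma_0,\,|\gamma|\le T_0\}$. The hypothesis of Lemma \ref{prl2} for a modulus $q$ concerns zeros with $\sigma>1-(\log q)^{-3/4}$, and for this half-plane to be contained in $\{\sigma>1-C_1(\log x)^{-3/4}\}$ you need $(\log q)^{-3/4}\le C_1(\log x)^{-3/4}$, i.e.\ $q\ge x^{C_1^{-4/3}}$ --- a \emph{lower} bound on $q$, not an upper bound. The inequality $\log q\le 0.472\log x$ only helps with the height $\exp\{\epsilon(\log q)^{3/4}\}\le T_0$; it does nothing for the width. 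So for any fixed choice of $C_1$ your construction says nothing about moduli with $K(\epsilon)<q<x^{C_1^{-4/3}}$ (e.g.\ $q=\exp\{(\log x)^{9/10}\}$ or $q=x^{0.01}$ with $C_1$ of moderate size): knowing that no primitive character to a divisor of $q$ has a zero with $\beta>1-C_1(\log x)^{-3/4}$ does not verify the much wider zero-free region Lemma \ref{prl2} demands for such $q$, and Siegel--Walfisz only covers $q\le K(\epsilon)$. Trying to repair this by enlarging the depth (taking $C_1$, or ultimately a constant-width region, to cover smaller $q$) destroys property (1): the zero-density bound then gives an exceptional set of size $\exp\{c\,C_1(\log x)^{1/4}\}$ with $c\,C_1$ far exceeding $3.641$, or even a power of $x$ if you push down to bounded $q$.

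This is exactly why the paper does not run Lemma \ref{prl2} over the whole range $q>K(\epsilon)$. It first invokes Lemma \ref{prl1} (Alford--Granville--Pomerance) with a fixed small $\epsilon_0$, which handles \emph{all} $q\le x^{0.4166}$ outside multiples of a \emph{bounded} exceptional set $\mathcal{D}_{\epsilon_0}(x)\subset[\log x,x]$, and only uses Harman's Lemma \ref{prl2} for $q\in[x^{0.4166},x^{0.472}]$; there $(\log q)^{-3/4}\le 1/W$ with $W=(0.4166\log x)^{3/4}$, so the zero rectangle $\{\sigma>1-1/W,\,|t|\le V\}$, $V=\exp\{c_1(\log x)^{3/4}\}$, really does cover the required region, with divisors $d<\log x$ handled by the classical zero-free region and the exceptional-modulus bound (Davenport, pp.~93--96), much as you propose for the tiny divisors. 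Moreover, this split is what pins down the numerics you leave open: the depth $1/W$ is forced by the threshold $x^{0.4166}$, and Montgomery's estimate with $Q=x^{0.472}$ and $\sigma=1-1/W$ gives the exponent $2\cdot 0.944\cdot(0.4166)^{-3/4}\approx 3.6409$, whence $3.641$ after adding the $O(1)$ elements of $\mathcal{D}_{\epsilon_0}(x)$. Without an input of the strength of Lemma \ref{prl1} for the intermediate moduli, your scheme cannot simultaneously achieve property (3) on all of $(K(\epsilon),x^{0.472}]$ and the size bound in property (1).
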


\begin{proof} Set $W=(0.4166\log x)^{3/4}$. From \cite[p. 93, 95]{dav}\;there
is an absolute constant $c_1$ such that there is at most one
primitive character $\chi_1$ to a modulus $q_1\le V=\exp\{c_1(\log
x)^{3/4}\}$ whose $L$-function has a zero $\rho$ with $|{\rm
Im}(\rho)|\le V$ and ${\rm Re}(\rho)>1-1/W$. By \cite[p. 96]{dav},
this exceptional modulus $q_1$ satisfies $q_1\ge\log x$. In
addition, Montgomery showed in \cite{mont}\;that
\be\label{pre1}\sum_{q\le Q}\quad\sideset{}{^*}\sum_{\chi\;({\rm
mod}\;q)}N(\sigma,V,\chi)\ll (Q^2V)^{2(1-\sigma)/\sigma}(\log
QV)^{13}\;\;\;(4/5\le\sigma\le1),\ee where $\sideset{}{^*}\sum$
means that the sum runs over primitive characters only. Inequality
\eqref{pre1}\;with $Q=x^{0.472}$ and $\sigma=1-1/W$ yields that
$N(\sigma,V,\chi)=0$ for all primitive characters to every moduli
$q\le x^{0.472}$ with at most $\exp\{3.64094(\log x)^{1/4}\}$
exceptions. Call this exceptional set $\mathcal{E}_1(x)$. This set
contains no elements $\le\log x$ and at most one element $\le V$, by
the discussion in the beginning of the proof. Next, applying Lemma
\ref{prl1}\;with $\epsilon_0=2/3\times10^{-4}$ we obtain a set
$\mathcal{D}_{\epsilon_0}(x)\subset[\log x,x]$ with boundedly many
elements and the property that if $q\le x^{0.4166}$ and
$q\notin\mathcal{MD}_{\epsilon_0}(x)$, then
\be\label{prl3e2}\pi(x;q,a)\ge(1-\epsilon_0)\frac x{\phi(q)\log
x}\quad{\rm for}\quad(a,q)=1.\ee Set
$$\mathcal{E}(x)=\mathcal{E}_1(x)\cup\mathcal{D}_{\epsilon_0}(x).$$ Clearly,
conditions (1) and (2) hold for $\mathcal{E}(x)$. Also, if
$q\le x^{0.4166}$ is such that $q\notin\mathcal{ME}(x)$, then (3)
holds by \eqref{prl3e2}. Finally, if $q\in[x^{0.4166},x^{0.472}]$
and $q\notin\mathcal{ME}(x)$, then the hypothesis of Lemma
\ref{prl2}\;is met and we deduce (3). This completes the proof of
the lemma.
\end{proof}

Below we state the Brun-Titchmarsh inequality \cite[Theorem
3.7]{Halb}.

\begin{lemma}\label{prl4} Uniformly in $1\le q<y\le x$ and $(a,q)=1$ we
have that $$\pi(x;q,a)-\pi(x-y;q,a)\ll\frac y{\phi(q)\log(2y/q)}.$$
\end{lemma}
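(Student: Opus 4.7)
The plan is to apply Selberg's upper-bound sieve to the set $\mathcal{A}$ of integers in $(x-y,x]$ lying in the progression $a \bmod q$. Writing each such integer as $n = a+qm$, I would obtain a set of $y/q + O(1)$ values of $m$ in an interval, and for each squarefree $d$ coprime to $q$ the divisibility condition $d \mid a+qm$ would pick out a single residue class of $m$ modulo $d$, giving $|\mathcal{A}_d| = y/(qd) + O(1)$. The sifting density on primes $p \nmid q$ is then $g(p) = 1/p$, and the sifted cardinality $S(\mathcal{A},z)$ majorizes the number of primes $>z$ in the progression that lie in $(x-y,x]$.

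Next, I would choose a sifting parameter $z$ slightly below $(y/q)^{1/2}$ (for instance $z = (y/q)^{1/2}/\log(2y/q)$) and invoke Selberg's $\Lambda^2$ construction to obtain
$$S(\mathcal{A},z) \le \frac{y/q}{G(z)} + R, \qquad G(z) = \sum_{\substack{d \le z \\ (d,q) = 1}} \frac{\mu^2(d)}{\phi(d)},$$
with remainder $R \ll \sum_{n \le z^2} 3^{\omega(n)} \ll z^2 (\log z)^2$, which by the choice of $z$ is $o(y/q)$. Primes $\le z$ contribute an additional $\pi(z) \ll \sqrt{y/q}$, so altogether
$$\pi(x;q,a) - \pi(x-y;q,a) \le \frac{y/q}{G(z)} + O(y/q).$$

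The decisive input is the classical lower bound $G(z) \gg (\phi(q)/q)\log(2z)$, which I would derive from the Mertens-type estimate $\sum_{d \le z} \mu^2(d)/\phi(d) = \log z + O(1)$ by Mobius inverting the coprimality condition, expressing $G(z)$ (up to standard multiplicative bookkeeping) as a sum over $k \mid q$ of $\mu(k)/\phi(k)$ times a truncated inner sum; the resulting Euler product at the primes dividing $q$ produces precisely the factor $\phi(q)/q$. With the above choice of $z$ this gives $G(z) \gg (\phi(q)/q)\log(2y/q)$, and assembling the pieces yields $\ll y/(\phi(q)\log(2y/q))$. When $y \le q$ the claim is trivial, since then $(x-y,x]$ contains at most one term of the progression while $\phi(q) \le q \le y$ and $\log(2y/q) = O(1)$.

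The main obstacle is the uniformity in $q$ of the lower bound $G(z) \gg (\phi(q)/q)\log(2z)$; this is the step that forces $\phi(q)$ rather than $q$ in the denominator of the final inequality, and tracking the $q$-dependence faithfully through the Mobius inversion is the heart of the argument. A secondary concern is balancing the Selberg remainder against the main term, which constrains $z$ to lie just below $(y/q)^{1/2}$ but does not affect the order of magnitude. A fully detailed execution is carried out in Halberstam--Richert, Theorem 3.7.
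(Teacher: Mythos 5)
The paper offers no proof of this lemma: it is the classical Brun--Titchmarsh inequality, quoted verbatim from Halberstam--Richert, Theorem 3.7, so the only ``paper proof'' is that citation, and your Selberg-sieve sketch is essentially the argument of the cited source. However, as written your assembly of the estimate has a genuine quantitative flaw. With your choice $z=(y/q)^{1/2}/\log(2y/q)$ the Selberg remainder is $\ll z^{2}(\log z)^{2}\asymp y/q$, not $o(y/q)$; and, more importantly, an additive error of size $y/q$ is not acceptable here: the bound to be proved is $y/(\phi(q)\log(2y/q))$, which is smaller than $y/q$ by the factor $q/(\phi(q)\log(2y/q))$. Thus when $\phi(q)\asymp q$ and $y/q$ is large, your displayed inequality
$$\pi(x;q,a)-\pi(x-y;q,a)\le\frac{y/q}{G(z)}+O\Bigl(\frac yq\Bigr)$$
is weaker than the lemma by a factor of $\log(2y/q)$, so the step ``the remainder is negligible'' fails for your parameter choice.

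The repair is routine but must be made: take, say, $z^{2}=(y/q)\log^{-4}(2y/q)$, so that the remainder and the contribution $\pi(z)\ll z$ of the primes $\le z$ are both $\ll (y/q)\log^{-2}(2y/q)\le y/(\phi(q)\log(2y/q))$ once $y/q$ exceeds an absolute constant, while one still has $\log z\gg\log(2y/q)$ in that range; for bounded $y/q$ the lemma is trivial, since then the progression meets $(x-y,x]$ in at most $y/q+1\ll y/\phi(q)$ integers and $\log(2y/q)\ll1$. (Note that the genuinely trivial case is bounded $y/q$, not $y\le q$, which is excluded by the hypothesis $q<y$.) Two smaller remarks: the key bound $G(z)\ge\frac{\phi(q)}{q}\log z$ is obtained more cleanly not by M\"obius-inverting the coprimality condition --- the alternating signs and the truncations $k\mid q$, $k\le z$ make a lower bound uniform in $q$ awkward when $q$ has many prime factors --- but from $\frac{\mu^{2}(d)}{\phi(d)}=\frac{\mu^{2}(d)}{d}\prod_{p\mid d}(1-1/p)^{-1}$, which gives
$$G(z)\ \ge\sum_{\substack{n\le z\\(n,q)=1}}\frac1n\ \ge\ \frac{\phi(q)}{q}\sum_{n\le z}\frac1n\ \ge\ \frac{\phi(q)}{q}\log z,$$
which is exactly where the factor $\phi(q)/q$ enters, as you correctly identified; with these adjustments your argument does deliver the stated uniform bound.
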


In addition, we will need a generalization of Lemma \ref{prl4},
which is an easy application of the results and methods in
\cite{nair_ten}. Let $\mathcal{M}$ denote the class of functions
$F:\SN\to[0,+\infty)$ for which there exist constants $A_F$ and
$B_{F,\epsilon}$, $\epsilon>0$, such that
$$F(nm)\le\min\{A_F^{\Omega(m)},B_{F,\epsilon}m^\epsilon\}F(n)$$ for all $(m,n)=1$ and all $\epsilon>0$.

\begin{lemma}\label{l2} Let $F\in\mathcal{M}$, $a\in\SZ\setminus\{0\}$ and $1\le q\le h\le x$ such that
$(a,q)=1$ and $x>|a|$. If $q\le x^{1-\epsilon}$ and $\frac
hq\ge(\frac{x-a}q)^\epsilon$ for some $\epsilon>0$, then
$$\sum_{\substack{x-h<p\le x\\p\equiv\;a({\rm
mod}\;q)}}F\Bigl(\frac{p-a}q\Bigr)\ll_{a,\epsilon,F}\frac
h{\phi(q)(\log x)^2}\sum_{n\le x}\frac{F(n)}n;$$ the implied
constant depends on $F$ only via the constants $A_F$ and
$B_{F,\alpha},\;\alpha>0$.
\end{lemma}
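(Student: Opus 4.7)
The strategy is to reduce the bound, via the substitution $m=(p-a)/q$, to an $F$-weighted sum over integers $m$ in a short interval for which $qm+a$ is prime, and then to apply the Shiu--Nair--Tenenbaum framework of \cite{nair_ten}. Setting
\[
S:=\sum_{\substack{m\in I\\ qm+a\ \text{prime}}}F(m),\qquad I:=\Bigl(\tfrac{x-h-a}{q},\tfrac{x-a}{q}\Bigr],
\]
we have $|I|=h/q$; the hypothesis $(a,q)=1$ ensures the linear form $Q(m)=qm+a$ has no fixed prime divisor, and $h/q\ge ((x-a)/q)^\epsilon$ guarantees $I$ is long enough to apply Shiu-type bounds in short intervals.

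I would detect the primality of $qm+a$ via a standard upper-bound sieve (say, the Selberg sieve at level $D=(x/q)^{\theta}$ for some small $\theta>0$), producing weights $\lambda_d$ supported on squarefree $d\le D$ with $(d,qa)=1$, $|\lambda_d|\le 1$, and $\lambda_1=1$, such that $\mathbf{1}_{qm+a\,\text{prime}}\le \sum_{d\mid qm+a}\lambda_d$ whenever $qm+a>D$; the $O(D/q)$ remaining $m\in I$ contribute negligibly (their $F$-values are $\ll x^\epsilon$ by the class-$\mathcal{M}$ growth). Since $(d,q)=1$, the divisibility $d\mid qm+a$ collapses to the single congruence $m\equiv -a\bar q\pmod d$, and swapping summations gives
\[
S \ll \sum_{\substack{d\le D\\ (d,qa)=1}}|\lambda_d|\sum_{\substack{m\in I\\ m\equiv -a\bar q\,(\mathrm{mod}\ d)}}F(m) \; + \; \text{(negligible)}.
\]
For each $d$ the inner sum is an $F$-weighted short-interval arithmetic-progression sum, controlled by the Shiu--Nair--Tenenbaum bound
\[
\sum_{\substack{m\in I\\ m\equiv c\,(\mathrm{mod}\ d)}}F(m) \ll_{\epsilon,F}\frac{|I|}{d\log x}\sum_{n\le x}\frac{F(n)}{n},
\]
uniformly in $d\le D$. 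Summing against $\lambda_d$ and invoking the sieve main-term estimate with the $(d,q)=1$ restriction---which produces the singular-series factor $q/\phi(q)$ from $\prod_{p\mid q}(1-1/p)^{-1}$---gives $\sum_{d\le D,(d,qa)=1}|\lambda_d|/d\ll_a (q/\phi(q))/\log x$, whence
\[
S\ll_{a,\epsilon,F}\frac{h}{\phi(q)(\log x)^2}\sum_{n\le x}\frac{F(n)}{n},
\]
as required.

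The principal obstacle is establishing the Shiu--Nair--Tenenbaum bound for $F$-weighted short-interval AP sums uniformly in $d$ up to the sieve cutoff $D$, with implied constants depending on $F$ only through the class constants $A_F$ and $B_{F,\alpha}$; this is exactly where $q\le x^{1-\epsilon}$ and $h/q\ge ((x-a)/q)^\epsilon$ enter decisively, and it is precisely the content of the machinery developed in \cite{nair_ten}. The dependence on $a$ arises only through bounded local corrections at primes dividing $a$, both in the sieve weights (the excluded $d$'s) and in the singular series, explaining why the implied constant depends on $a$ rather than only on $\epsilon$ and $F$.
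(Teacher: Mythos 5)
There is a genuine gap at the decisive step of your argument. After inserting the sieve weights you pass to absolute values, bound each inner sum by a Shiu--Nair--Tenenbaum estimate, and then claim $\sum_{d\le D,\,(d,qa)=1}|\lambda_d|/d\ll_a (q/\phi(q))/\log x$. No such bound holds for any standard upper-bound sieve weights (Selberg or beta sieve): one has $|\lambda_d|\le 1$ on a dense set of squarefree moduli, so $\sum_{d\le D}|\lambda_d|/d$ is $\gg 1$ (typically of size a power of $\log D$), not $O(1/\log x)$. The $1/\log$ saving in sieve theory comes from sign cancellation in the \emph{signed} sum $\sum_d\lambda_d\,g(d)/d$, and to exploit those signs you would need, for each $d\le D$, an asymptotic formula $\sum_{m\in I,\,m\equiv -a\bar q\,(d)}F(m)=\frac{g_F(d)}{d}M+E_d$ with $\sum_{d\le D}|E_d|$ small --- not merely the one-sided Shiu bound. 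For a general $F$ in the class $\mathcal{M}$ (which contains, e.g., indicator functions of integers free of small prime factors) such equidistribution in progressions to moduli that are a power of $x$, in short intervals, is not available; so the composition ``sieve for primality of $qm+a$, then Shiu in each residue class'' cannot be carried out as written, and your argument loses exactly the second factor of $\log x$ that the lemma asserts.

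The paper's proof avoids this obstruction by never separating the sieve from the $F$-average: it relaxes primality of $p=q_1m+a$ to the condition $P^-(q_1m+a)>\sqrt{X}$ (with $X=(x-a)/q_1$, the exceptional range $q_1m+a\le\sqrt{X}$ handled separately), and then applies Corollary 3 of \cite{nair_ten} directly to the bivariate sum $\sum_{X-H<m\le X}F_1(Q_1(m))F_2(Q_2(m))$ with $Q_1(m)=m$, $Q_2(m)=q_1m+a$, $F_1=F$ (suitably modified at the prime $2$, with $q_1\in\{q,2q\}$ chosen to fix parity) and $F_2$ the characteristic function of integers with $P^-(n)>\sqrt{X}$. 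In that result the Eulerian factor $\prod_{p\le X}(1-\rho_Q(p)/p)$ together with $\sum_{n\le X}F_2(n)\rho_{Q_2}(n)/n\ll 1$ delivers the extra $q/(\phi(q)\log x)$ simultaneously with the weighted average of $F$, which is precisely the coupling your term-by-term approach cannot reproduce. If you want to salvage your outline, the fix is essentially to invoke this bivariate Nair--Tenenbaum theorem rather than a sieve composed with single-function short-interval bounds.
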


\begin{proof} Observe that it suffices to show the
lemma for the function $\widetilde{F}$ defined for $n=2^rm$ with
$(m,2)=1$ by
$$\widetilde{F}(n)=\min\{A_F^r,\min_{\epsilon>0}(B_{F,\epsilon}2^{r\epsilon})\}F(m).$$
We have that $\widetilde{F}\in\mathcal{M}$ with parameters $A_F$ and
$B_{F,\alpha}^2,~\alpha>0$. Without loss of generality we may assume
that $\widetilde{F}(1)=1$. Also, suppose that $x\ge
x_0(\epsilon,a,F)$, where $x_0(a,\epsilon,F)$ is a sufficiently
large constant; otherwise, the result is trivial. Put
$$q_1=\begin{cases}q&{\rm if}\;2|aq\cr 2q,&{\rm if}\;2\nmid
aq,\end{cases}$$ and let $X=(x-a)/q_1$ and $H=h/q_1$. Note that if
$p\equiv a\pmod q$ and $p>2$, then $p\equiv a\pmod{q_1}$. So if we
set $p=q_1m+a$ for $p>2$, then
\be\begin{split}\sum_{\substack{x-h<p\le x\\p\equiv\;a({\rm
mod}\;q)}}\widetilde{F}\Bigl(\frac{p-a}q\Bigr)&\le\sum_{\substack{X-H<m\le
X\\P^-(q_1m+a)>\sqrt{X}}}\widetilde{F}\Bigl(\frac{q_1}qm\Bigr)+\sum_{\substack{X-H<m\le
X\\3\le q_1m+a\le\sqrt{X}}}\widetilde{F}\Bigl(\frac{q_1}qm\Bigr)+O_{a,F}(1)\nonumber\\
&\ll_{a,F}\sum_{\substack{X-H<m\le
X\\P^-(q_1m+a)>\sqrt{X}}}\widetilde{F}(m)+\sum_{\substack{X-H<m\le
X\\m\le\sqrt{X}-a}}\widetilde{F}(m)+1,\end{split}\ee since
$q_1/q\in\{1,2\}$ and $\widetilde{F}(2m)\ll_F\widetilde{F}(m)$ for
all $m\in\SN$. Let $F_1(n)=\widetilde{F}(n)$ and $F_2(n)$ be the
characteristic function of integers $n$ such that $P^-(n)>\sqrt{X}$.
Let $Q_1(x)=x$, $Q_2(x)=q_1x+a$ and $Q=Q_1Q_2$. Also, if
$P(x)\in\SZ[x]$, then let $\rho_P(m)$ be the number of solution of
the congruence $P(x)\equiv0\pmod m$. By Corollary 3 in
\cite{nair_ten}, we have that
\be\label{l2e1}\begin{split}\sum_{\substack{X-H<m\le
X\\P^-(q_1m+a)>\sqrt{X}}}\widetilde{F}(m)&=\sum_{X-H<m\le X}F_1(m)F_2(mq_1+a)\\
&\ll_{a,\epsilon,F} H\prod_{p\le
X}\Bigl(1-\frac{\rho_Q(p)}p\Bigr)\prod_{j=1}^2\sum_{n\le
X}\frac{F_j(n)\rho_{Q_j}(n)}n\\
&\ll_{a,\epsilon}\frac h{\phi(q)}\frac1{\log^2x}\sum_{n\le
X}\frac{\widetilde{F}(n)}n,\end{split}\ee since $q\le
x^{1-\epsilon}$ and the discriminant of $Q$ depends only on $a$.
Also, if the sum
$$\sum_{\substack{X-H<m\le X\\m\le\sqrt{X}-a}}\widetilde{F}(m)$$ is non-zero, then $H\ge X/2$. In this case
Corollary 3 in \cite{nair_ten}\;implies that
$$\sum_{\substack{X-H<m\le X\\m\le\sqrt{X}-a}}\widetilde{F}(m)\ll_{a,\epsilon,F}\frac{\sqrt{X}}{\log X}\sum_{n\le
X}\frac{\widetilde{F}(n)}n \ll_{a,\epsilon}\frac
h{q\log^2x}\sum_{n\le X}\frac{\widetilde{F}(n)}n,$$ which together
with~\eqref{l2e1} completes the proof of the lemma.
\end{proof}

Using Lemma \ref{l2}\;we prove the following estimate.

\begin{lemma}\label{l2b} Let $1\le v\le v_0<2$, $a\in\SZ\setminus\{0\}$,
$1\le q\le x$ and $3/2\le y\le(x-a)/q$ with $(a,q)=1$ and $x>|a|$.
If $q\le x^{1-\epsilon}$ for some $\epsilon>0$, then
$$\sum_{\substack{p\le x\\p\equiv\;a({\rm
mod}\;q)}}v^{\Omega(\frac{p-a}q;y)}\ll_{a,\epsilon,v_0}\frac
x{\phi(q)\log x}(\log y)^{v-1}.$$
\end{lemma}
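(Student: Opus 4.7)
The plan is to apply Lemma \ref{l2} to the multiplicative function $F(n) = v^{\Omega(n;y)}$ and then to estimate the resulting sum $\sum_{n\le x} F(n)/n$ by a standard Mertens-type computation. First I would verify that $F\in\mathcal{M}$: it is nonnegative and multiplicative, with $F(p^k)=v^k$ for $p\le y$ and $F(p^k)=1$ for $p>y$, so for coprime $n,m$ we have $F(nm)=F(n)F(m)\le v_0^{\Omega(m)}F(n)$; since $v_0<2$, the divisor bound gives $v_0^{\Omega(m)}\le d(m)^{\log v_0/\log 2}\ll_{v_0,\alpha} m^\alpha$ for every $\alpha>0$. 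Hence the constants $A_F,B_{F,\alpha}$ depend only on $v_0$.

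Next I apply Lemma \ref{l2} with $h=x$. The hypothesis $h/q\ge((x-a)/q)^\epsilon$ becomes $x/q\ge((x-a)/q)^\epsilon$, which, using $q\le x^{1-\epsilon}$ and $(x-a)/q\le 2x/q$, holds once $x\ge x_0(a,\epsilon)$; small $x$ can be absorbed into the implied constant. Lemma \ref{l2} then supplies
$$\sum_{\substack{p\le x\\p\equiv a\,({\rm mod}\,q)}} v^{\Omega((p-a)/q;y)}\ll_{a,\epsilon,v_0}\frac{x}{\phi(q)(\log x)^2}\sum_{n\le x}\frac{v^{\Omega(n;y)}}{n}.$$

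It remains to show that $\sum_{n\le x} v^{\Omega(n;y)}/n\ll_{v_0}(\log x)(\log y)^{v-1}$. For this I decompose each $n\le x$ uniquely as $n=n_1n_2$ with $P^+(n_1)\le y<P^-(n_2)$, so that $v^{\Omega(n;y)}=v^{\Omega(n_1)}$, and bound the double sum by
$$\sum_{\substack{n_1\le x\\P^+(n_1)\le y}}\frac{v^{\Omega(n_1)}}{n_1}\sum_{\substack{n_2\le x/n_1\\P^-(n_2)>y}}\frac{1}{n_2}.$$
By the Mertens theorem and partial summation, the inner sum is $\ll(\log x)/\log y$ uniformly for $y\ge 3/2$. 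The outer sum is dominated by the Euler product
$$\prod_{p\le y}\Bigl(1-\frac{v}{p}\Bigr)^{-1}\asymp(\log y)^v,$$
where the implied constants are uniform in $v\in[1,v_0]$ since $v_0<2$ keeps the factor at $p=2$ bounded. Combining the two estimates yields the desired $(\log x)(\log y)^{v-1}$, and substituting back into the displayed consequence of Lemma \ref{l2} finishes the proof.

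The only mild obstacle is keeping everything uniform in $v\in[1,v_0]$: one needs that the Mertens estimate $\prod_{p\le y}(1-v/p)^{-1}\asymp(\log y)^v$, and the absorption of $F$ into the class $\mathcal{M}$, have constants depending only on $v_0$ rather than on $v$. Both are routine because the constraint $v_0<2$ forbids the Euler factor at $p=2$ from blowing up, and the divisor-bound estimate $v_0^{\Omega(m)}\ll_{v_0,\alpha}m^\alpha$ is uniform in $m$. Everything else is bookkeeping.
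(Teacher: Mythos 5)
The first step of your argument fails: $F(n)=v^{\Omega(n;y)}$ does \emph{not} belong to the class $\mathcal{M}$ when $v>1$. The inequality you invoke, $v_0^{\Omega(m)}\le d(m)^{\log v_0/\log 2}$, is backwards — the standard fact is $d(m)\le 2^{\Omega(m)}$, i.e. $d(m)^{\log v_0/\log 2}\le v_0^{\Omega(m)}$ — and in truth $v^{\Omega(m)}$ is not $\ll_{\alpha}m^{\alpha}$ for small $\alpha$: taking $m=2^k$ (or $p^k$ for any fixed prime $p\le y$) gives $v^{\Omega(m;y)}=m^{\log v/\log p}$, so the requirement $F(nm)\le B_{F,\alpha}m^{\alpha}F(n)$ for \emph{every} $\alpha>0$ is violated as soon as $v>1$. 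Since the hypothesis $F\in\mathcal{M}$ is exactly what makes the Nair--Tenenbaum input behind Lemma \ref{l2} work (it is designed to exclude functions growing geometrically on prime powers, such as $v^{\Omega}$), you cannot apply Lemma \ref{l2} directly to $v^{\Omega(\cdot;y)}$, and the rest of the proposal has nothing to stand on. Your subsequent Mertens-type estimate $\sum_{n\le x}v^{\Omega(n;y)}/n\ll_{v_0}(\log x)(\log y)^{v-1}$ is correct (the factor at $p=2$ is controlled by $v_0<2$), but it does not repair the application step.

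The missing idea, which is how the paper proceeds, is to separate the squarefull part: write $v^{\Omega(n;y)-\omega(n;y)}=(1*b)(n)$ where $b$ is multiplicative, vanishes on primes and on primes $>y$, and satisfies $b(p^l)=v^{l-2}(v-1)$ for $l\ge2$, $p\le y$. This gives $v^{\Omega(n;y)}\le\sum_{kf=n}b(k)v^{\omega(k;y)}v^{\omega(f;y)}$, so the sum over $p\equiv a\pmod q$ is bounded by a sum over moduli $qk$, and Lemma \ref{l2} is applied to the function $v^{\omega(\cdot;y)}$, which \emph{does} lie in $\mathcal{M}$ because $v_0^{\omega(m)}=m^{O(1/\log\log 3m)}\ll_{\alpha}m^{\alpha}$. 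One applies this for $k\le\sqrt{X}$ (so that $qk\le x^{1-\epsilon/3}$) and treats $k>\sqrt{X}$ trivially via the mean value of $v^{\omega}$; the hypothesis $v\le v_0<2$ is then used precisely to make the resulting sums over the (squarefull-supported) weights $b(k)v^{\omega(k;y)}$, with the extra factor $k^{\alpha}$ in the large-$k$ range, converge (one needs $2^{1-\alpha}>v_0$). If you want to salvage your write-up, this convolution decomposition must be inserted before any appeal to Lemma \ref{l2}.
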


\begin{proof} We may assume that $x\ge x_0(a,\epsilon,v_0)$, where $x_0(a,\epsilon,v_0)$
is a sufficiently large constant. Let $X=(x-a)/q$ and write
$v^{\Omega(n;y)-\omega(n;y)}=(1*b)(n)$, where $b$ is the
multiplicative function that satisfies
$$b(p^l)=\begin{cases}0&\text{if}\;l=1\;{\rm or}\;p>y,\cr
v^{l-2}(v-1)&\text{if}\;l\ge2\;{\rm and}\;p\le y.\end{cases}$$ Then
$$v^{\Omega(n;y)}=v^{\omega(n;y)}\sum_{kf=n}b(k)\le\sum_{kf=n}b(k)v^{\omega(k;y)}v^{\omega(f;y)}$$
and consequently \be\label{prl10}\sum_{\substack{p\le
x\\p\equiv\;a({\rm mod}\;q)}}v^{\Omega(\frac{p-a}q;y)}\le\sum_{k\le
X}v^{\omega(k;y)}b(k)\sum_{\substack{p\le x\\p\equiv\;a({\rm
mod}\;qk)}}v^{\omega(\frac{p-a}{qk};y)}.\ee If $k\le\sqrt{X}$, then
$kq\le x^{1-\epsilon/3}$. So Lemma \ref{l2}\;implies that
$$\sum_{\substack{p\le x\\p\equiv\;a({\rm
mod}\;qk)}}v^{\omega(\frac{p-a}{qk};y)}\ll_{a,\epsilon}\frac{x(\log
y)^{v-1}}{\phi(kq)\log x}.$$ If $k>\sqrt{X}$, then
$$\sum_{\substack{p\le x\\p\equiv\;a({\rm
mod}\;qk)}}v^{\omega(\frac{p-a}{qk};y)}\le\sum_{m\le
X/k}v^{\omega(m)}\ll_{a,\epsilon}\frac{x(\log X)^{v-1}}{kq},$$ by
Theorem 01 in \cite{hall_ten2}. Hence the right hand side of
\eqref{prl10}\;is $$\ll_{a,\epsilon}\frac{x(\log
y)^{v-1}}{\phi(q)\log
x}\sum_{k\le\sqrt{X}}\frac{v^{\omega(k;y)}b(k)}{\phi(k)}+\frac{x(\log
X)^{v-1}}{qX^{\alpha/2}}\sum_{\sqrt{X}<k\le
X}\frac{v^{\omega(k;y)}b(k)k^\alpha}k\ll_{a,\epsilon,v_0}\frac{x(\log
y)^{v-1}}{\phi(q)\log x},$$ provided that $0<\alpha<1/2$ and
$2^{1-\alpha}>v_0$, which completes the proof.
\end{proof}

We complete the results about primes in arithmetic progressions with
the following estimate.

\begin{lemma}\label{prl5} Let $a\in\SZ\setminus\{0\}$, $\epsilon>0$ and $A>0$. There exists $B=B(A)$ such that if $R\le
x^{1/10-\epsilon}$ and $QR<x(\log x)^{-B}$, then
$$\sum_{\substack{r\le R\\(r,a)=1}}\biggl\lvert\sum_{\substack{q\le Q\\(q,a)=1}}\Bigl(\pi(x;rq,a)-\frac{{\rm li}(x)}{\phi(rq)}\Bigr)\biggr\rvert
\ll_{a,\epsilon,A}\frac x{(\log x)^A}.$$
\end{lemma}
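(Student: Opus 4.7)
The plan is to dualize the outer absolute value into $\pm 1$ weights, recognize the resulting quantity as a bilinear error term for primes in arithmetic progressions with well-factorable coefficients, and then invoke Theorem~9 of \cite{bfi1}.

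First, for each admissible $r$ set $\epsilon_r\in\{-1,0,+1\}$ equal to the sign of
$$T_r := \sum_{\substack{q\le Q\\(q,a)=1}}\Bigl(\pi(x;rq,a)-\frac{{\rm li}(x)}{\phi(rq)}\Bigr).$$
Then the quantity to estimate equals
$$\sum_{\substack{r\le R\\(r,a)=1}}\epsilon_rT_r=\sum_{\substack{m\le RQ\\(m,a)=1}}\lambda_m\Bigl(\pi(x;m,a)-\frac{{\rm li}(x)}{\phi(m)}\Bigr),$$
where
$$\lambda_m=\sum_{\substack{rq=m,\,r\le R,\,q\le Q\\(r,a)=(q,a)=1}}\epsilon_r$$
is the Dirichlet convolution of the bounded sequences $(\epsilon_r\mathbf{1}_{r\le R,\,(r,a)=1})$ and $(\mathbf{1}_{q\le Q,\,(q,a)=1})$. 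In particular $|\lambda_m|\le\tau(m)$, and the outer index $m$ runs over integers bounded by $RQ\le x(\log x)^{-B}$.

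Second, after a dyadic decomposition of the $r$- and $q$-ranges, the right-hand side is a bilinear error sum in the fixed residue class $a$, with well-factorable coefficients whose two factors have lengths bounded by $R\le x^{1/10-\epsilon}$ and $Q\le x/R$ respectively. This is precisely the setting of Theorem 9 of \cite{bfi1}, whose application yields the desired bound $\ll_{a,\epsilon,A}x(\log x)^{-A}$ provided $B=B(A)$ is chosen sufficiently large in terms of $A$ and $\epsilon$; the $O(\log^2 x)$ loss from the dyadic decomposition is harmless.

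The main technical obstacle is to match the coefficients $\lambda_m$ to the precise hypotheses on well-factorable weights in \cite{bfi1}, a bookkeeping step handled via the dyadic splitting above. The hypothesis $R\le x^{1/10-\epsilon}$ is exactly what the dispersion method underlying Theorem 9 requires on the short variable so that the long variable $q$ (of length essentially up to $x^{9/10}$) remains accessible to the bilinear / large-sieve estimates proved in \cite{bfi1}. Once the coefficients are placed in the canonical form, all other parts of the argument are routine.
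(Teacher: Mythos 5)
There is a genuine problem here, not with the theorem you cite but with what you think it says. Theorem 9 of \cite{bfi1} is not a dispersion estimate for well-factorable weights; it is, almost verbatim, the $\psi$-analogue of the statement to be proved: for $a\neq0$, $A>0$, $R\le x^{1/10-\epsilon}$ and $QR<x(\log x)^{-B(A)}$ it bounds $\sum_{r\le R,(r,a)=1}\bigl|\sum_{q\le Q,(q,a)=1}\bigl(\psi(x;qr,a)-x/\phi(qr)\bigr)\bigr|$ by $x(\log x)^{-A}$, with the absolute value over the short variable $r$ only and the inner sum over $q$ carrying the constant coefficient $1$. That flat inner sum is exactly what allows the combined modulus $qr$ to go up to $x(\log x)^{-B}$ (the paper itself stresses this point when it later smooths a sieve weight precisely so that Lemma 2.6 becomes applicable). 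Consequently the paper's proof is a one-liner: apply Theorem 9 and use partial summation to pass from $\psi(x;qr,a)$ and $x/\phi(qr)$ to $\pi(x;qr,a)$ and ${\rm li}(x)/\phi(qr)$ --- a step your write-up omits entirely.

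Your reduction, by contrast, replaces the expression by $\sum_{m\le RQ}\lambda_m\bigl(\pi(x;m,a)-{\rm li}(x)/\phi(m)\bigr)$ with $|\lambda_m|\le\tau(m)$ and then appeals to a ``well-factorable'' estimate at level $RQ$. This step would fail: the well-factorable results of \cite{bfi1} (and all known unconditional results of that type) only reach level $x^{4/7-\epsilon}$, far below $x(\log x)^{-B}$; moreover your $\lambda_m$, being a convolution of a sequence supported on $[1,R]$ with the indicator of $[1,Q]$ for one fixed pair $(R,Q)$, is not well-factorable in the sense required there, and a bound of the shape $x(\log x)^{-A}$ for general divisor-bounded coefficients at level near $x$ would go far beyond the Elliott--Halberstam conjecture. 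Your heuristic that the long variable has length ``essentially up to $x^{9/10}$'' is also off: when $R$ is small, $Q$ may be nearly $x$. In short, the sign-dualization and dyadic decomposition discard exactly the structural feature (unweighted $q$-sum, absolute values only over $r\le x^{1/10-\epsilon}$) that makes the true Theorem 9 work; keeping the expression in its original double-sum form, citing Theorem 9 as actually stated, and adding the partial summation gives the correct and much shorter argument.
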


\begin{proof} Use Theorem 9 in \cite{bfi1}\;plus partial summation.
\end{proof}

We need an estimate on the summatory function of the reciprocals of
Euler's $\phi$ function and other closely related quantities. Such a
result was proved by Sitaramachandra \cite{Sita}. Using the methods
of \cite{Sita}\;we extend this result according to the needs of this
paper.

\begin{lemma}\label{prl7} Let $a\in\SN$, $s\in\SZ$ and $x\ge1$ such that $1\le|s|\le x$. Then
\be\begin{split}\sum_{\substack{n\le
x\\(n,s)=1}}\frac{\phi(a)}{\phi(an)}=\frac{315\zeta(3)}{2\pi^4}\frac{\phi(s)}{|s|}g(as)\Bigl(\log
x+\gamma-\sum_{p\nmid as}\frac{\log p}{p^2-p+1}+\sum_{p|s}\frac{\log
p}{p-1}\Bigr)\nonumber\\
+O\Bigl(\tau(s)\frac{a|s|}{\phi(as)}\frac{(\log2x)^{2/3}}x\Bigr),\end{split}\ee
where $g(as)=\prod_{p|as}\frac{p(p-1)}{p^2-p+1}$.
\end{lemma}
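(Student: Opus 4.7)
The plan is to reduce to a coprimality version of Sitaramachandra's estimate via the multiplicative factorization $n=uv$ with $u\mid a^\infty$ (every prime factor of $u$ divides $a$) and $(v,a)=1$. Since $u$ and $a$ share their prime support, $\phi(au)=u\phi(a)$, and because $(au,v)=1$ this gives $\phi(a)/\phi(an)=1/(u\phi(v))$. The condition $(n,s)=1$ splits into $(u,s)=1$ and $(v,s)=1$; combined with $(v,a)=1$ the latter becomes $(v,as)=1$. Hence
$$\sum_{\substack{n\le x\\(n,s)=1}}\frac{\phi(a)}{\phi(an)}=\sum_{\substack{u\mid a^\infty\\(u,s)=1}}\frac{1}{u}\sum_{\substack{v\le x/u\\(v,as)=1}}\frac{1}{\phi(v)},$$
the outer sum being effectively supported on $u\le x$.

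For the inner sum I would follow Sitaramachandra's method, adapted to the coprimality constraint $(v,as)=1$. Writing $1/\phi(v)=(1/v)\sum_{d\mid v}\mu^2(d)/\phi(d)$, substituting $v=de$, and invoking the Mertens-type estimate $\sum_{e\le E,(e,q)=1}1/e=(\phi(q)/q)\bigl(\log E+\gamma+\sum_{p\mid q}\frac{\log p}{p-1}\bigr)+O(\tau(q)/E)$ with $q=as$, one reduces to a sum over $d$ whose main term can be extended to infinity via Sitaramachandra's tail bound $\sum_{d>D}\mu^2(d)/(d\phi(d))\ll(\log 2D)^{2/3}/D$. The extended Dirichlet series evaluates as
$$\sum_{\substack{d\ge1\\(d,as)=1}}\frac{\mu^2(d)}{d\phi(d)}=\prod_{p\nmid as}\frac{p^2-p+1}{p(p-1)}=\frac{315\zeta(3)}{2\pi^4}\,g(as),$$
and its logarithmic derivative yields $\sum_{(d,as)=1}\mu^2(d)(\log d)/(d\phi(d))=(315\zeta(3)/(2\pi^4))\,g(as)\sum_{p\nmid as}\log p/(p^2-p+1)$. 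Combining these gives the inner sum as $(\phi(as)/as)(315\zeta(3)/(2\pi^4))g(as)$ times the bracket $\log(x/u)+\gamma+\sum_{p\mid as}\frac{\log p}{p-1}-\sum_{p\nmid as}\frac{\log p}{p^2-p+1}$, plus a tail error.

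To assemble the stated main term, I would sum over $u$ using the Euler product and its log-derivative,
$$\sum_{\substack{u\mid a^\infty\\(u,s)=1}}\frac{1}{u}=\prod_{\substack{p\mid a\\p\nmid s}}\frac{p}{p-1},\qquad\sum_{\substack{u\mid a^\infty\\(u,s)=1}}\frac{\log u}{u}=\Bigl(\prod_{\substack{p\mid a\\p\nmid s}}\frac{p}{p-1}\Bigr)\sum_{\substack{p\mid a\\p\nmid s}}\frac{\log p}{p-1},$$
together with the algebraic simplifications
$$\frac{\phi(as)}{as}\prod_{\substack{p\mid a\\p\nmid s}}\frac{p}{p-1}=\frac{\phi(s)}{|s|},\qquad\sum_{p\mid as}\frac{\log p}{p-1}-\sum_{\substack{p\mid a\\p\nmid s}}\frac{\log p}{p-1}=\sum_{p\mid s}\frac{\log p}{p-1}.$$
These collapse the assembled expression into the exact form in the statement, with the factor $(\phi(s)/|s|)g(as)$ and the clean constant term $\gamma-\sum_{p\nmid as}\log p/(p^2-p+1)+\sum_{p\mid s}\log p/(p-1)$.

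The main obstacle is error consolidation. Each inner-sum error, once multiplied by $1/u$ and summed over $u\mid a^\infty$, $u\le x$, must collapse into the stated $O(\tau(s)(a|s|/\phi(as))(\log 2x)^{2/3}/x)$. Using $\tau(as)\le\tau(a)\tau(s)$ to isolate the $\tau(s)$ factor, packaging the remaining $a$-dependence into $a|s|/\phi(as)$, and ensuring that the $(\log2(x/u))^{2/3}/(x/u)$ tail estimate decays fast enough that the resulting series in $u$ converges, are the most delicate steps. All of these are handled by a careful application of Sitaramachandra's refined bound with the weight $\phi(as)/as$ tracked throughout, rather than replacing the tail by a crude Rankin-type estimate.
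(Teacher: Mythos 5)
Your main-term computation is sound; in fact, merging your two variables $u$ and $e$ into the single variable $f=ue$ (which, for $u\mid a^\infty$, $(u,s)=1$, $(e,as)=1$, runs exactly over the integers $f$ with $(f,s)=1$) recovers the identity the paper itself uses, namely $\frac{\phi(a)}{\phi(an)}=\sum_{kf=n,(k,a)=1}\frac{\mu^2(k)}{k\phi(k)f}$, and your Euler products and logarithmic derivatives do collapse correctly to the stated main term. The genuine gap is exactly the step you flag and then wave away: the error consolidation, done termwise in $u$ as you propose, cannot reach the stated bound. Estimating the inner sum at level $X=x/u$ gives, at best, an error of size about $\frac{as}{\phi(as)}\tau(s)\frac{(\log 2X)^{2/3}}{X}$; after multiplying by $1/u$ this is $\asymp\frac{(\log (2x/u))^{2/3}}{x}$, with no decay in $u$ whatsoever. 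Summing over all $u\mid a^\infty$ with $u\le x$ therefore multiplies the error bound by the number of such $u$, which grows like a power of $\log x$ (already $\asymp\log x$ when $a=2$), so this route only yields an error $O_a\bigl((\log x)^{\omega(a)+2/3}/x\bigr)$ -- far weaker than, and not uniform in $a$ like, the stated $O\bigl(\tau(s)\frac{a|s|}{\phi(as)}\frac{(\log 2x)^{2/3}}{x}\bigr)$. No careful tracking of the weight $\phi(as)/as$ repairs this; one would need cancellation among the errors attached to different $u$, i.e.\ one must keep the oscillating terms and treat the $u$- and $e$-sums jointly, which is precisely what the merged convolution does.

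Two further points in the same direction. The reduction ``$\tau(as)\le\tau(a)\tau(s)$, then package the remaining $a$-dependence into $a|s|/\phi(as)$'' is not legitimate: $\tau(a)$ is not $\ll a/\phi(a)$ (for $a$ a product of the first $k$ primes, $\tau(a)=2^k$ while $a/\phi(a)\asymp\log k$), so a $2^{\omega(a)}$-type loss coming from a M\"obius sum over the divisors of $as$ cannot be absorbed into the stated factor. Moreover, invoking the Mertens-type estimate with the crude error $O(\tau(q)/E)$ already forfeits the exponent $2/3$: summed against $\mu^2(d)/(d\phi(d))$ it produces an error of order $\tau(q)(\log X)/X$. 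To retain both $(\log 2X)^{2/3}$ and the factor $as/\phi(as)$ one must keep the Euler--Maclaurin term $P(E)=\{E\}-\tfrac12$ and appeal to Walfisz's bound through an estimate of the shape $\sum_{n\le x,(n,m)=1}\frac{\mu^2(n)}{\phi(n)}P(x/n)\ll\frac{|m|}{\phi(m)}(\log 2x)^{2/3}$, as the paper does. The paper arranges the bookkeeping so that the only divisor sum incurred is over $d\mid s$ (which is exactly where the factor $\tau(s)$ comes from), the $P$-weighted sums are handled once with modulus $as$ (giving $a|s|/\phi(as)$), and Euler--Maclaurin is applied once per squarefree $k$ rather than once per pair. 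If you restructure your argument accordingly -- collapsing $u$ and $e$ into the single harmonic sum $\sum_{f\le x/k,(f,s)=1}1/f$ before estimating anything -- your computation becomes the paper's proof; as written, it proves only a weaker error term.
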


\begin{proof} Since the proof of this part is along the same lines with the proof of the main
result in \cite{Sita}, we simply sketch it. Let $P(x)=\{x\}-1/2$,
where $\{x\}$ denotes the fractional part of $x$. Then using the
estimate
$$\sum_{n\le x}\frac{P(x/n)}n\ll(\log2x)^{2/3},$$ which was proved
in \cite[p. 98]{wal}, along with an argument similar to the one
leading to Lemma 2.2 in \cite{Sita}, we find that
\be\label{pre3}\sum_{\substack{n\le
x\\(n,m)=1}}\frac{\mu^2(n)}{\phi(n)}P(x/n)\ll\frac{|m|}{\phi(m)}(\log2x)^{2/3}\ee
for every $m\in\SZ\setminus\{0\}$. Also, by the Euler-McLaurin's
summation formula we have \be\label{pre4}\sum_{n\le x}\frac1n=\log
x+\gamma-\frac{P(x)}x+O\Bigl(\frac1{x^2}\Bigr).\ee Observe that the
arithmetic function $n\to\phi(a)/\phi(an)$ is multiplicative. In
particular, we have that
\be\label{pre3b}\frac{\phi(a)}{\phi(an)}=\sum_{\substack{kf=n\\(k,a)=1}}\frac{\mu^2(k)}{k\phi(k)f}.\ee
Using relations \eqref{pre3}, \eqref{pre4}\;and \eqref{pre3b}\;and
estimating the error terms as in \cite{Sita}\;gives us that
\be\begin{split}\sum_{\substack{n\le
x\\(n,s)=1}}\frac{\phi(a)}{\phi(an)}&=\sum_{\substack{k\le
x\\(k,as)=1}}\frac{\mu^2(k)}{k\phi(k)}\sum_{\substack{f\le
x/k\\(f,s)=1}}\frac1f=\sum_{d|s}\frac{\mu(d)}d\sum_{\substack{k\le
x/d\\(k,as)=1}}\frac{\mu^2(k)}{k\phi(k)}\sum_{l\le x/kd}\frac1l\\
&=\sum_{d|s}\frac{\mu(d)}d\sum_{\substack{k\le
x/d\\(k,as)=1}}\frac{\mu^2(k)}{k\phi(k)}\Bigl(\log\frac{x/d}k+\gamma-\frac
k{x/d}P\Bigl(\frac{x/d}k\Bigr)+O\Bigl(\frac{k^2}{(x/d)^2}\Bigr)\Bigl)\\
&=\sum_{d|s}\frac{\mu(d)}d\sum_{\substack{k=1\\(k,as)=1}}^\infty\frac{\mu^2(k)}{k\phi(k)}\Bigl(\log\frac{x/d}k+\gamma\Bigr)
+O\Bigl(\frac{\tau(s)a|s|}{\phi(as)}\frac{(\log2x)^{2/3}}x\Bigr),\nonumber\end{split}\ee
since $|s|\le x$. Finally, a simple calculation and the identity
$$\sum_{k=1}^\infty\frac{\mu^2(k)}{k\phi(k)}=\frac{315\zeta(3)}{2\pi^4}$$ complete the proof.
\end{proof}

The following result is known as the `fundamental lemma' of sieve
methods. It has appeared in the literature in several different
forms (see for example \cite[Theorem 2.5, p. 82]{Halb}). We need a
version of it that can be found in \cite{fi}\;and \cite{iwan1}.

\begin{lemma}\label{fundle} Let $D\ge2$, $D=Z^v$ with $v\ge3$.
\begin{enumerate}\item[(a)] Fix $\kappa>0$. There exist two sequences $\{\lambda^+(d)\}_{d\le D}$, and
$\{\lambda^-(d)\}_{d\le D}$ such that $$|\lambda^\pm(d)|\le1,$$
$$\begin{cases}(\lambda^-*1)(n)=(\lambda^+*1)(n)=1&\text{if}\;P^-(n)>Z,\cr
(\lambda^-*1)(n)\le0\le(\lambda^+*1)(n)&\text{otherwise},\end{cases}$$
and, for any multiplicative function $\alpha(d)$ with $0\le
\alpha(p)\le\min\{\kappa,p-1\}$, $$\sum_{d\le
D}\lambda^\pm(d)\frac{\alpha(d)}d=\prod_{p\le
Z}\Bigl(1-\frac{\alpha(p)}p\Bigr)(1+O_\kappa(e^{-v})).$$
\item[(b)] There exists a sequence $\{\rho(d)\}_{d\le D}$ such that
\be\label{prop1}|\rho(d)|\le1,\ee
\be\label{prop2}\begin{cases}(\rho*1)(n)=1&\text{if}\;P^-(n)>Z,\cr
(\rho*1)(n)\le0&\text{otherwise},\end{cases}\ee and, for any
multiplicative function $\alpha(d)$ satisfying $0\le \alpha(p)\le
p-1$ and \be\label{iwancond}\prod_{y<p\le
w}\Bigl(1-\frac{\alpha(p)}p\Bigr)^{-1}\le\frac{\log w}{\log
y}\Bigl(1+\frac L{\log y}\Bigr)\quad(3/2\le y\le w),\ee we have
\be\label{prop3}\sum_{d\le D}\rho(d)\frac{\alpha(d)}d\gg\prod_{p\le
Z}\Bigl(1-\frac{\alpha(p)}p\Bigr),\ee provided that $D\ge D_0(L)$,
where $D_0(L)$ is a constant depending only on $L$.
\end{enumerate}
\end{lemma}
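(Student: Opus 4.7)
The plan is to realize Lemma \ref{fundle} as an instance of the Rosser--Iwaniec combinatorial sieve machinery, specialized to the case where the level of distribution is a fixed power $D = Z^v$ of the sifting parameter. For part (a), I would construct $\lambda^\pm(d)$ as truncated M\"obius weights: set $\lambda^\pm(d) = 0$ unless $d$ is squarefree with all prime factors $\le Z$, and, writing $d = p_1 p_2 \cdots p_r$ with $Z \ge p_1 > p_2 > \cdots > p_r$, put $\lambda^+(d) = \mu(d)$ precisely when the descending chain obeys the beta-sieve admissibility condition $p_1 p_2 \cdots p_{2m}\, p_{2m+1}^{s} < D$ for every $0 \le m < r/2$, with $s = s(\kappa)$ chosen so that the sieve behaves as a $\kappa$-dimensional one; define $\lambda^-(d)$ analogously but with admissibility indices shifted by one, reversing the sign. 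The bounds $|\lambda^\pm| \le 1$ and $\mathrm{supp}(\lambda^\pm) \subset [1, D]$ are immediate from the construction.

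The sign property of $\lambda^\pm * 1$ follows from a Buchstab-style induction on $P^-(n)$: iterating the identity $\mathbf{1}_{P^-(n) > Z}(n) = \mathbf{1}_{n=1} - \sum_{p \mid n,\, p \le Z} \mathbf{1}_{P^-(n/p) = p}(n)$ produces partial sums whose residues alternate in sign, and the admissibility cutoff at depth $v = \log D/\log Z$ is precisely what guarantees the one-sided inequalities required in (a). For the main-term estimate, I would factor out the Mertens product $V(Z) := \prod_{p \le Z}(1 - \alpha(p)/p)$ and compare $\sum_{d \le D} \lambda^\pm(d)\alpha(d)/d$ to $V(Z) F_\kappa(v)$ and $V(Z) f_\kappa(v)$, respectively, where $F_\kappa, f_\kappa$ are the standard $\kappa$-dimensional sieve functions. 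The asymptotic $F_\kappa(v), f_\kappa(v) = 1 + O_\kappa(e^{-v})$ as $v \to \infty$, derived from the delay-differential equations these functions satisfy, yields the claimed error.

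For part (b), I would take $\rho$ to be the Rosser lower-bound weights of the linear sieve ($\kappa = 1$). Under the hypothesis (\ref{iwancond}), which is the one-sided linear-dimension condition used by Iwaniec, the linear sieve main theorem of \cite{iwan1} gives $\sum_{d \le D}\rho(d)\alpha(d)/d \ge V(Z)\bigl(f(v) + O(L/\log Z)\bigr)$ for the Rosser lower-sieve function $f$, which is strictly positive for $v \ge 2$. Since $v \ge 3$, $f(v)$ is bounded below by a positive absolute constant, and the hypothesis $D \ge D_0(L)$ is what allows one to absorb the $L$-dependent perturbation so as to recover (\ref{prop3}). The main obstacle in a self-contained treatment would be the combinatorial verification of the sign conditions (\ref{prop2}) for $\rho$ and their analogs for $\lambda^\pm$: these identities are the technical heart of the Rosser--Iwaniec construction, and the versions needed here are worked out in \cite{fi} and \cite{iwan1}, so the proof essentially amounts to quoting those sources with the parameters matched to the present setting.
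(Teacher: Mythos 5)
Your proposal is correct and follows essentially the same route as the paper: the paper proves (a) by directly citing \cite[Lemma 5]{fi}, and proves (b) by writing down exactly the Rosser lower-bound weights $\rho(d)=\mu(d)\mathbf{1}_A(d)$ with the admissibility condition $p_{2l}^3p_{2l-1}\cdots p_1<D$, quoting \cite{fi} for the support and sign properties \eqref{prop1}--\eqref{prop2} and \cite[Lemma 3]{iwan1} (plus the positivity $h(3)>0$ from \cite{iwan2}) for the lower bound \eqref{prop3}, with the $L$-dependent error absorbed by $D\ge D_0(L)$. Apart from cosmetic details (your $f(v)+O(L/\log Z)$ versus the paper's $h(v)+O(e^{\sqrt L-v}(\log D)^{-1/3})$, and the slight imprecision in saying $\lambda^-$ ``reverses the sign'' rather than shifts the admissibility parity), this matches the paper's argument.
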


\begin{proof} (a) The result follows by \cite[Lemma 5, p. 732]{fi}.

\medskip

(b) The construction of the sequence $\{\rho(d)\}_{d\le D}$ and the
proof that it satisfies the desired properties is based on
\cite[Lemma 5]{fi}\;and \cite[Lemma 3]{iwan1}. We sketch the proof
below. Without loss of generality we may assume that $Z\notin\SN$.
Set $P(Z)=\prod_{p<Z}p$ and $\rho(d)=\mu(d){\bf 1}_A(d)$, where
${\bf 1}_A$ is the characteristic function of the set
$$A=\{d|P(Z):d=p_1\cdots
p_r,\;p_r<\cdots<p_1<Z,\;p_{2l}^3p_{2l-1}\cdots p_1<D\;(1\le l\le
r/2)\}.$$ By the proof of Lemma 5 in \cite{fi}, the sequence
$\{\rho(d)\}_{d=1}^\infty$ is supported in $\{d\in\SN:d<D\}$ and
satisfies \eqref{prop1}\;and \eqref{prop2}. Finally, by Lemma 3 in
\cite{iwan1}, there exists a function $h$, independent of the
parameters $D$, $Z$ and $L$, such that $$\sum_{d\le
D}\rho(d)\frac{\alpha(d)}d\ge\bigl(h(v)+O(e^{\sqrt{L}-v}(\log
D)^{-1/3})\bigr)\prod_{p<Z}\Bigl(1-\frac{\alpha(p)}p\Bigr)$$ for all
multiplicative functions $\alpha(d)$ that satisfy $0\le\alpha(p)\le
p-1$ and \eqref{iwancond}. In addition, $h$ is increasing and
$h(3)>0$, by \cite[p. 172-173]{iwan2}. This proves that
\eqref{prop3}\;holds too and completes the proof of the lemma.
\end{proof}

We now introduce some notation we will be utilizing later. For $a$
and $k$ in $\SN$ and $1\le y<z$ define
$$\tau(a)=|\{d\in\SN:d|n\}|,\quad\tau(a,y,z)=|\{d\in\SN:d|n,y<d\le z\}|$$ and
$$\tau_k(a)=\lvert\{(d_1,\dots,d_k)\in\SN^k:d_1\cdots d_k=a\}\rvert.$$ Moreover, for $\sigma>0$ let
$$\mathscr{L}(a;\sigma)=\{x\in\SR:\tau(a,e^x,e^{x+\sigma})\ge1\}=\bigcup_{d|a}[\log d-\sigma,\log d)$$ and
$$L(a;\sigma)={\rm meas}(\mathscr{L}(a;\sigma)),$$ where `${\rm
meas}$' denotes the Lebesgue measure on the real line. We note the
straightforward inequality
\be\label{pre2}L(ab;\sigma)\le\tau(a)L(b;\sigma)\quad\text{for}\quad(a,b)=1,\ee
which is item (ii) of Lemma 3.1 in \cite{kf}.

When $\eta$ is in the intermediate range of values, the basic result
we will use to bound $\hs$ from below is the following estimate.

\begin{lemma}\label{prl8}Let $\epsilon>0$, $B>0$, $x\ge1$, $3\le y+1\le z$ with $z\le x^{2/3}$ and
$\eta\in[(\log y)^{-B},\frac{\log y}{100}]$. Then
$$H(x,y,z)\asymp_{\epsilon,B}\frac x{\log^2y}\sum_{\substack{a\le
y^\epsilon\\\mu^2(a)=1}}\frac{L(a;\eta)}a.$$
\end{lemma}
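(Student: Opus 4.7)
The plan is to invoke Ford's structural decomposition of integers with a divisor in $(y,z]$ from \cite{kf}, specialized to the intermediate range $\eta\in[(\log y)^{-B},(\log y)/100]$. The guiding intuition is that any such integer admits an essentially unique parametrization $n=epm\le x$, where $a$ is a squarefree $y^\epsilon$-smooth integer, $e$ is a divisor of $a$, $p$ is a prime satisfying $pe\in(y,z]$, and $m$ is coprime to $ap$ with $P^-(m)>y^\epsilon$; the factor $pe\in(y,z]$ is the divisor of $n$ witnessing that $n$ is counted by $H(x,y,z)$. The measure of the admissible set for $\log p$, namely $\log y-\mathscr L(a;\eta)$, is precisely $L(a;\eta)$, and the two factors of $\log y$ in the denominator of the target expression come from one application of the prime number theorem (for $p$) and one application of the fundamental lemma (for $m$).

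\textbf{Lower bound.} I would fix squarefree $a\le y^\epsilon$ with $P^+(a)\le y^\epsilon$ and count the tuples $(a,e,p,m)$ directly. Short-interval PNT is effective here because each constituent interval $(y/e,z/e]$ has length $\gg(y/e)(\log y)^{-B}$ while $z/e\le x^{2/3}$; after using inclusion--exclusion to collapse overlaps in $\bigcup_{e\mid a}(y/e,z/e]$, this yields $\gg yL(a;\eta)/(a\log y)$ admissible primes. Combining with Lemma \ref{fundle} (with sifting parameter $Z=y^{\epsilon/v}$ for some $v\ge3$) to count $m\le x/(ep)$ with $P^-(m)>y^\epsilon$, and summing over $a$, yields the lower bound. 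Distinctness of the resulting $n$ follows because the prime $p$ is characterized as the unique prime factor of $n$ in $(y^\epsilon,ye^\eta]$, so the map $(a,e,p,m)\mapsto n$ is at worst boundedly many-to-one.

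\textbf{Upper bound.} I would run the argument in reverse: for each $n\le x$ with a divisor in $(y,z]$, select a canonical divisor $d\in(y,z]$ (for instance, the smallest), let $a$ be the squarefree $y^\epsilon$-smooth kernel of $n$, and decompose $d=ep$ with $e\mid a$ squarefree $y^\epsilon$-smooth and $p$ a prime exceeding $y^\epsilon$. The cases where $d$ is itself $y^\epsilon$-smooth, or $d/e$ has several large prime factors, contribute negligibly by Rankin's trick using $z\le x^{2/3}$ and $\eta\le(\log y)/100$. Bounding the count of primes $p$ in the resulting union of short intervals by Brun--Titchmarsh (Lemma \ref{prl4}), and the count of complementary factors $m=n/d$ by Lemma \ref{fundle}, produces the matching upper bound $\ll(x/\log^2 y)\sum L(a;\eta)/a$.

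\textbf{Main obstacle.} The principal difficulty is the inclusion--exclusion over $e\mid a$: when several divisors of $a$ lie within $\eta$ of each other on the log scale, the union $\bigcup_{e\mid a}(y/e,z/e]$ has measure strictly smaller than $\sum_e\eta/e$, and it is exactly this saving that turns the naive weight $\eta\tau(a)$ into $L(a;\eta)$. On the lower-bound side one must actually realize this measure via a prime count (which requires the PNT hypothesis $z\le x^{2/3}$ to remain within the validity of Huxley's bound for every $e\mid a$), while on the upper-bound side one must avoid double-counting via a carefully chosen canonical decomposition of $d$. Both of these operations are precisely what Ford executes in \cite{kf}, and the proof of Lemma \ref{prl8} is an unpacking of that machinery rather than a new argument.
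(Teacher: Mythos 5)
Your proposal takes essentially the same route as the paper: the paper gives no self-contained argument for Lemma \ref{prl8}, but simply cites Theorem 1 and Lemmas 4.1, 4.2, 4.5, 4.8 and 4.9 of \cite{kf}, i.e.\ exactly the Ford machinery (parametrization $n=apm$ with $d\mid a$, $y<dp\le z$, the measure $L(a;\eta)$ of $\bigcup_{d\mid a}[\log d-\eta,\log d)$, PNT plus the fundamental lemma for the lower bound, Brun--Titchmarsh plus a canonical decomposition for the upper bound) that you sketch. The only quibbles are cosmetic: ordinary PNT suffices since the intervals lose only powers of $\log y$ (no Huxley-type input, and $z\le x^{2/3}$ serves to keep the sifted cofactor long relative to the sieve level), and the parametrization should read $n=apm$ with the witnessing divisor $ep$, $e\mid a$.
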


The proof of Lemma \ref{prl8}\;can be found in \cite{kf}. Even
though this result\;is not stated explicitly, it is a direct
corollary of the methods there: see Theorem 1 and Lemmas 4.1, 4.2,
4.5, 4.8 and 4.9 in \cite{kf}. Also, we will need the following
result, which is Corollary 1 in \cite{kf}.

\begin{lemma}\label{cor1} Suppose $x_1, y_1, z_1, x_2, y_2, z_2$ are
real numbers with $2\le y_i+1\le z_i\le x_i\;(i=1,2)$,
$\log(z_1/y_1)\asymp\log(z_2/y_2)$, $\log y_1\asymp\log y_2$ and
$\log(x_1/z_1)\asymp\log(x_2/z_2)$. Then
$$\frac{H(x_1,y_1,z_1)}{x_1}\asymp\frac{H(x_2,y_2,z_2)}{x_2}.$$
\end{lemma}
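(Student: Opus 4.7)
The strategy is to derive the lemma directly from Theorem \ref{thm1} via a piecewise analysis, exploiting that Ford's formula for $H(x,y,z)/x$ is expressed in terms of scale-invariant quantities ($\eta$, $u=\eta/\log y$, $\beta$, $\xi$, $\log y$) that are preserved up to constants under the hypothesised transformation. First I would reduce to the regime $y_i \le \sqrt{x_i}$. If $y_i > \sqrt{x_i}$ and $x_i/y_i \ge x_i/z_i+1$, part (b) of Theorem \ref{thm1} gives $H(x_i,y_i,z_i)\asymp H(x_i,x_i/z_i,x_i/y_i)$. Setting $y_i'=x_i/z_i$ and $z_i'=x_i/y_i$, the new triples satisfy $y_i'\le\sqrt{x_i}$ (since $z_i\ge y_i>\sqrt{x_i}$), $\log y_1'\asymp\log y_2'$, $\log(z_i'/y_i')=\log(z_i/y_i)$, and $\log(x_i/z_i')=\log y_i$, so the three asymptotic hypotheses transfer unchanged. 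If instead $x_i/y_i<x_i/z_i+1$, then $H(x_i,y_i,z_i)/x_i\asymp\eta_i$, which is trivially comparable; at the boundary where one index falls in each regime the two expressions agree up to constants, so no discontinuity arises.

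Once in the case $y\le\sqrt{x}$, the expression in Theorem \ref{thm1}(a) is independent of $x$, so I would verify the conclusion sub-case by sub-case. For $y+1\le z\le z_0(y)$ the bound equals $\eta$, which is comparable by hypothesis. For $2y\le z\le y^2$ the bound is $u^{\delta}(\log(2/u))^{-3/2}$; since $u_1\asymp u_2$ with both values in a compact subset of $(0,2]$, both factors are preserved up to constants. For $z\ge y^2$ the bound is $\asymp 1$ and there is nothing to prove. Crossings between adjacent sub-cases are harmless because Ford's bounds agree up to absolute constants at each common boundary, so if $(y_1,z_1)$ and $(y_2,z_2)$ straddle a boundary one compares each against the common boundary value.

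The main technical obstacle is the middle sub-case $z_0(y)\le z\le 2y$, in which the bound reads $\beta/(\max\{1,-\xi\}(\log y)^{G(\beta)})$ with $\beta,\xi$ defined by $\eta=(\log y)^{-\beta}$ and $\beta=\log4-1+\xi/\sqrt{\log\log y}$. A short computation shows that $\eta_1\asymp\eta_2$ together with $\log y_1\asymp\log y_2$ forces $\beta_1-\beta_2=O(1/\log\log y)$, and hence $\xi_1-\xi_2=O(1/\sqrt{\log\log y})$; combined with the continuity of $G$ at $\log4-1$ this yields $(\log y_1)^{G(\beta_1)}\asymp(\log y_2)^{G(\beta_2)}$ and $\max\{1,-\xi_1\}\asymp\max\{1,-\xi_2\}$ whenever $|\xi_i|$ is bounded away from $0$. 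In the residual band $|\xi_i|=O(1/\sqrt{\log\log y})$, a direct check shows that the bound collapses to $\asymp(\log y)^{-\log4+1}\asymp\eta$, matching the adjacent sub-case $y+1\le z\le z_0(y)$; the band is therefore absorbed into its neighbour and the same comparability follows. Putting the pieces together completes the proof.
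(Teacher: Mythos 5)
The paper does not actually prove this lemma: it is quoted verbatim as Corollary 1 of \cite{kf}, so your attempt to rederive it from the statement of Theorem \ref{thm1} is a self-contained route rather than a reconstruction of anything in this paper. Within the regime $y_i\le\sqrt{x_i}$ your analysis is essentially right, and the delicate sub-case $z_0(y)\le z\le 2y$ is handled correctly: $\beta_1-\beta_2=O(1/\log\log y)$ and $\xi_1-\xi_2=O(1/\sqrt{\log\log y})$ do follow from the hypotheses, and (using that $G$ is Lipschitz on $[0,\log 4-1]$ — mere continuity at $\log4-1$ is not enough, since you need $|G(\beta_1)-G(\beta_2)|\log\log y=O(1)$ — and that $\beta_i\gg 1/\log\log y_i$ there, so that also $\beta_1\asymp\beta_2$) the middle-range expression is preserved up to constants; the boundary matchings at $z=z_0(y)$, $z=2y$, $z=y^2$ are genuine.

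The gap is in the reduction to that regime. You treat the case distinction of Theorem \ref{thm1} as if a mismatch could only occur "at the boundary", but the hypotheses are only comparabilities of logarithms, so one triple can have $y_1=x_1^{0.6}$ (handled by part (b)) while the other has $y_2=x_2^{0.4}$ (handled by part (a)); neither is near $y=\sqrt{x}$, and after transforming only the first triple you need $\log(x_1/z_1)\asymp\log y_2$, which is not among your hypotheses. The statement is still true, but proving it requires an extra observation you never make: in any such mixed configuration the hypotheses force $\log y_i\asymp\log(x_i/y_i)=\eta_i+\log(x_i/z_i)\asymp\log x_i$ for both $i$, and one must then check case by case that both sides collapse to comparable quantities (either both $\asymp 1$, or both $\asymp\eta$, or part (a) evaluated at genuinely comparable parameters). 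The same issue recurs inside part (b): if one triple falls in the branch $x/y<x/z+1$ (value $\asymp\eta$) and the other in the main branch, you must argue that the transformed second triple necessarily lands in the range where part (a) also gives $\asymp\eta$ (which again follows from the forced inequality $\eta\ll 1/(x/z)$, but needs to be said). Finally, Theorem \ref{thm1} is only stated for $x>100000$ and $y\ge 100$, and the transformation can produce $y'=x/z$ below that threshold, so the bounded-parameter corner cases (where one shows directly $H\asymp \min\{1,\eta\}x$ with absolute constants) have to be dispatched separately; as written your argument silently assumes Theorem \ref{thm1} applies to every triple it meets.
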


Finally, we state a covering lemma, which a special case of Lemma
3.15 in \cite{fol}. Here for $I$ an interval of the real line we
denote by $rI$ the interval that has the same center as $I$ and $r$
times its diameter.

\begin{lemma}\label{prl9} Let $A=\bigcup_{n=1}^NI_n\subset\SR$, where the
sets $I_n$ are nonempty intervals of the form $[a,b)$. Then there
exists a subcollection $\{I_{i_m}\}_{m=1}^M$ of mutually disjoint
intervals such that
$$A\subset\bigcup_{m=1}^M3I_{i_m}.$$
\end{lemma}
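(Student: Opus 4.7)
The plan is to use a standard greedy Vitali-type selection. First I would reorder the intervals so that their lengths are non-increasing, $|I_1| \geq |I_2| \geq \cdots \geq |I_N|$. Then I construct the subcollection $\{I_{i_m}\}_{m=1}^M$ inductively: set $i_1 = 1$, and having selected indices $i_1 < i_2 < \cdots < i_k$, let $i_{k+1}$ be the smallest index $j > i_k$ for which $I_j$ is disjoint from $I_{i_1} \cup \cdots \cup I_{i_k}$, stopping when no such index exists. Mutual disjointness of the selected intervals is immediate from the construction.

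To verify the covering property $A \subset \bigcup_{m=1}^M 3I_{i_m}$, I would fix an arbitrary $n \in \{1,\ldots,N\}$. If $n = i_m$ for some $m$, then trivially $I_n \subset 3I_{i_m}$. Otherwise, the greedy rule forces the existence of some previously selected $i_m < n$ with $I_n \cap I_{i_m} \neq \emptyset$, and by the length ordering $|I_{i_m}| \geq |I_n|$. Writing $I_{i_m} = [a,b)$ and choosing $p \in I_n \cap I_{i_m}$, any $q \in I_n$ satisfies $|q - p| < |I_n| \leq b - a$; combined with $a \leq p < b$, this yields $2a - b < q < 2b - a$, so $q \in 3I_{i_m} = [2a-b, 2b-a)$. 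Hence $I_n \subset 3I_{i_m}$, as required.

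The proof is essentially the one-dimensional Vitali covering lemma, so no substantive obstacle is expected. The only minor care needed is handling the half-open convention when bounding the left endpoint of $3I_{i_m}$, which is a routine bookkeeping of strict versus non-strict inequalities. The key geometric content is simply that an interval intersecting a longer interval is contained in the threefold dilation of the longer one, which is precisely what the greedy rule (pick the longest remaining disjoint interval) is designed to exploit.
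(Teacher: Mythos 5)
Your proof is correct, and it is essentially the same argument the paper relies on: the paper does not prove this lemma but cites it as a special case of Lemma 3.15 in Folland's \emph{Real Analysis}, whose proof is precisely your greedy selection of intervals in order of decreasing length together with the observation that an interval meeting a longer one lies in its threefold dilation. Your handling of the half-open endpoints (strict inequalities $2a-b<q<2b-a$) is sound, so nothing further is needed.
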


%%%%%%%%%%%%%%%%%%%%%%%%%%%%%%%%%%%%%%%%%%%%%%
%%%%%%%%%%%%%%%%%%%%%%%%%%%%%%%%%%%%%%%%%%%%%%
%%%%%%%%%%%%%%%%%%%%%%%%%%%%%%%%%%%%%%%%%%%%%%
%%%%%%%%%%%%%%%%%%%%%%%%%%%%%%%%%%%%%%%%%%%%%%
%%%%%%%%%%%%%%%%%%%%%%%%%%%%%%%%%%%%%%%%%%%%%%

\section{Small values of $\eta$}

In this section we give the proof of Theorem \ref{thmsmall1}. First,
we show an auxiliary result.

\renewcommand{\labelenumi}{(\alph{enumi})}

\begin{lemma}\label{svl1} Let $a\in\SZ\setminus\{0\}$, $x\ge2$ and $3\le Q_1+1\le Q_2\le2Q_1$ with
$Q_1\le\sqrt{x}$ and $\{Q_1<q\le Q_2:(q,a)=1\}\neq\emptyset$.
\begin{enumerate}\item Let $\epsilon\in(0,1/12)$. If $x\ge x_0(a,\epsilon)$ and $Q_1+\log\log Q_1\le Q_2\le
x^{5/12-\epsilon}$, then \be\label{svl1e}\sum_{\substack{Q_1<q\le
Q_2\\(q,a)=1}}\pi(x;q,a)\gg_{a,\epsilon}\frac x{\log
x}\sum_{\substack{Q_1<q\le Q_2\\(q,a)=1}}\frac1{\phi(q)}.\ee If, in
addition, $(Q_2-Q_1)/\log\log Q_1\to\infty$ as $Q_1\to\infty$, then
\be\label{svl1ee}\sum_{\substack{Q_1<q\le
Q_2\\(q,a)=1}}\pi(x;q,a)\sim_{a,\epsilon}\frac x{\log
x}\sum_{\substack{Q_1<q\le
Q_2\\(q,a)=1}}\frac1{\phi(q)}\;\;\;(Q_1\to\infty).\ee
\item If $x\ge x_0(a)$ and $Q_1+\exp\{4.532(\log Q_1)^{1/4}\}\le Q_2\le x^{0.472}$, then
\eqref{svl1e}\;holds with the implied constant depending only on
$a$.
\medskip
\item Let $B\ge2$. If $Q_2\ge Q_1+Q_1(\log Q_1)^{-B}$, then
$$\sum_{\substack{Q_1<q\le
Q_2\\(q,a)=1}}\pi(x;q,a)\sim_{a,B}f(a)\frac{315\zeta(3)}{2\pi^4}\frac
{x\log(Q_2/Q_1)}{\log x}.$$
\end{enumerate}
\end{lemma}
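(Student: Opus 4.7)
The three parts share a common structure. I would first evaluate
\[
T := \sum_{\substack{Q_1<q\le Q_2\\(q,a)=1}}\frac{1}{\phi(q)}
\]
via Lemma \ref{prl7} applied at $Q_1$ and $Q_2$ and subtracted, obtaining $T = f(a)\frac{315\zeta(3)}{2\pi^4}\log(Q_2/Q_1) + O_a((\log Q_2)^{2/3}/Q_1)$. Then I would show
\[
\sum_{\substack{Q_1<q\le Q_2\\(q,a)=1}}\pi(x;q,a) \ge (1+o(1))\,{\rm li}(x)\,T,
\]
by invoking an input on primes in arithmetic progressions appropriate to the range: Lemma \ref{prl1} for (a), Lemma \ref{prl3} for (b), and Lemma \ref{prl5} for (c). Since ${\rm li}(x)\sim x/\log x$, this directly yields \eqref{svl1e} and the asymptotic identities after some bookkeeping.

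For part (a), for each $q\in(Q_1,Q_2]\setminus\mathcal{MD}_{\epsilon_0}(x)$ coprime to $a$, Lemma \ref{prl1} (with a small $\epsilon_0=\epsilon_0(\epsilon)$) gives $\pi(x;q,a)\ge(1-\epsilon_0){\rm li}(x)/\phi(q)$. Summing over such ``good'' $q$ yields
\[
\sum_q\pi(x;q,a) \ge (1-\epsilon_0)\,{\rm li}(x)\,(T-T_{\rm bad}),
\]
where $T_{\rm bad}$ collects the contribution of $\mathcal{MD}_{\epsilon_0}(x)\cap(Q_1,Q_2]$. Since $\mathcal{D}_{\epsilon_0}(x)$ has $O_\epsilon(1)$ elements, each $\ge\log x$, the bad $q$'s in the interval number $\ll_\epsilon 1+(Q_2-Q_1)/\log x$, with $1/\phi(q)\ll\log\log Q_2/Q_1$ each; hence $T_{\rm bad}\ll_\epsilon\log\log x/Q_1+(Q_2-Q_1)\log\log x/(Q_1\log x)$. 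When $Q_2-Q_1\ge(\log Q_1)^{2/3}$ the Sitaramachandra estimate (cf.\ the remark after Theorem \ref{thm2}) gives $T\asymp(Q_2-Q_1)/Q_1$, so $T_{\rm bad}/T\ll_\epsilon\log\log x/\log x=o(1)$. The asymptotic \eqref{svl1ee} follows because under $(Q_2-Q_1)/\log\log Q_1\to\infty$ all error terms, including those from Lemma \ref{prl7}, are genuinely $o(T)$.

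Part (b) is structurally identical to (a), with Lemma \ref{prl3} and its exceptional set $\mathcal{E}(x)$ replacing Lemma \ref{prl1} and $\mathcal{MD}_{\epsilon_0}(x)$. Splitting $\mathcal{E}(x)$ into elements below and above $\exp\{c_1(\log x)^{3/4}\}$ and using properties (1) and (2) of Lemma \ref{prl3}, the bad count is $O((Q_2-Q_1)/\log x)$ plus exponentially smaller contributions; the calibration between $4.532$ and $3.641$ then forces $T_{\rm bad}=o(T)$. For part (c), since $Q_2\le 2Q_1\le 2\sqrt{x}\le x/(\log x)^{B(A)}$, Lemma \ref{prl5} with $R=1$ applied at $Q_1$ and $Q_2$ and subtracted gives
\[
\sum_{\substack{Q_1<q\le Q_2\\(q,a)=1}}\pi(x;q,a) = {\rm li}(x)\,T + O_A\!\bigl(x/(\log x)^A\bigr)
\]
for any $A>0$. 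The hypothesis $Q_2-Q_1\ge Q_1/(\log Q_1)^B$ forces $\log(Q_2/Q_1)\gg(\log Q_1)^{-B}$, which dominates both the Lemma \ref{prl7} error and, for $A=A(B)$ large, the Bombieri--Vinogradov error, so the asymptotic follows.

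The main obstacle is part (a) in the short-interval regime $\log\log Q_1\le Q_2-Q_1<(\log Q_1)^{2/3}$, where Sita's estimate does not apply and $T$ could be as small as a single term $\asymp\log\log Q_1/Q_1$, comparable to $T_{\rm bad}$. I would handle this via a case split on $Q_1$: for $Q_1$ bounded in terms of $a$ and $\epsilon$, every coprime modulus $q\in(Q_1,Q_2]$ is uniformly bounded, so Siegel--Walfisz applies pointwise and gives $\pi(x;q,a)\gg x/(\phi(q)\log x)$ directly; for $Q_1$ large, $\log\log Q_1$ eventually exceeds the bounded count of bad integers in the interval, so good coprime integers dominate and the desired lower bound closes.
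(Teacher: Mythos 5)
Your overall strategy is the paper's: treat the moduli in $(Q_1,Q_2]$ as ``good'' or ``bad'' according to the exceptional sets of Lemmas \ref{prl1}, \ref{prl3}, \ref{prl5}, lower-bound the good contribution by $(1-\epsilon_0)\,{\rm li}(x)/\phi(q)$, and control the bad contribution; part (c) via Lemmas \ref{prl5} and \ref{prl7} is exactly the paper's proof, and your fallback for the short-interval regime of (a) (bounded $Q_1$ by Siegel--Walfisz; for large $Q_1$, the $\gg_a(Q_2-Q_1)$ good coprime moduli, each contributing at least $1/(2Q_1)$, dominate the $O_{a,\epsilon}(1)$ bad moduli, each contributing at most $O(\log\log Q_1/Q_1)$) is in substance the paper's two-case device of switching from $1/\phi(q)$ to $1/q$ when $\sum 1/\phi(q)\ll\log\log Q_1/Q_1$. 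However, there are concrete gaps. First, in part (b) your ``calibration between $4.532$ and $3.641$'' only bites when $\log Q_1\ge(3.641/4.532)^4\log x\approx 0.4166\log x$: for smaller $Q_1$ the hypothesis only gives $Q_2-Q_1\ge\exp\{4.532(\log Q_1)^{1/4}\}$, which can be far smaller than the possible size $\exp\{3.641(\log x)^{1/4}\}$ of $\mathcal{E}(x)$, so the exceptional multiples can swamp the interval and $T_{\rm bad}=o(T)$ does not follow. The paper handles this by splitting: for $Q_1\le x^{0.41666}$ one has $Q_2\le x^{5/12-\epsilon'}$ and part (a) applies, and only for $Q_1>x^{0.41666}$ is Lemma \ref{prl3} used; this case split is the actual reason for the constant $4.532$ and is missing from your argument.

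Second, \eqref{svl1ee} is a two-sided statement, but you only argue a lower bound $\sum\pi\ge(1+o(1))\,{\rm li}(x)T$; to get the matching upper bound you must bound $\pi(x;q,a)$ for the exceptional moduli from above, which the paper does with Brun--Titchmarsh (Lemma \ref{prl4}), and you must let the parameter $\epsilon_0$ in Lemma \ref{prl1} tend to $0$ rather than fix it, as the paper does by working with arbitrary $\epsilon_1\in(0,\epsilon]$. Finally, a smaller slip: in the regime $Q_2-Q_1\ge(\log Q_1)^{2/3}$ your claim $T_{\rm bad}/T\ll_\epsilon\log\log x/\log x$ is wrong as written, since $T_{\rm bad}/T\ll\log\log Q_1/(Q_2-Q_1)+\log\log Q_1/\log x$ and the first term is not $\ll\log\log x/\log x$ when $Q_1$ is much smaller than $x$; the conclusion survives because $1/\phi(q)\ll\log\log Q_1/q$ (not $\log\log x/q$) for $q\asymp Q_1$, so the first term is $\ll\log\log Q_1/(\log Q_1)^{2/3}\to0$ once bounded $Q_1$ is set aside, but the inequality should be corrected.
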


\begin{proof} (a) For every $\epsilon_1\in(0,\epsilon]$ and $x\ge
x_{\epsilon_1}$ Lemmas \ref{prl1}\;and \ref{prl4}\;imply that
\be\label{sv3}\begin{split}\sum_{\substack{Q_1<q\le
Q_2\\(q,a)=1}}\pi(x;q,a)&=(1+\epsilon_1\theta){\rm
li}(x)\sum_{\substack{Q_1<q\le
Q_2\\q\notin\mathcal{MD}_{\epsilon_1}(x)\\(q,a)=1}}\frac1{\phi(q)}+O\Bigl(\frac
x{\log x}\sum_{\substack{Q_1<q\le
Q_2\\q\in\mathcal{MD}_{\epsilon_1}(x)}}\frac1{\phi(q)}\Bigr)\\
&=(1+\epsilon_1\theta){\rm li}(x)\sum_{\substack{Q_1<q\le
Q_2\\(q,a)=1}}\frac1{\phi(q)}+O\Bigl(\frac x{\log
x}\sum_{\substack{Q_1<q\le
Q_2\\q\in\mathcal{MD}_{\epsilon_1}(x)}}\frac1{\phi(q)}\Bigr),\end{split}\ee
for some $|\theta|\le1$. Fix $d\in\mathcal{D}_{\epsilon_1}(x)$. If
$d\ge Q_2-Q_1$, then the interval $(Q_1/d,Q_2/d]$ contains at most
one integer and therefore \be\label{sv5}\sum_{Q_1/d<m\le
Q_2/d}\frac1{\phi(dm)}\ll\frac{\log\log Q_1}{Q_1}.\ee On the other
hand, if $d\le Q_2-Q_1$, then \be\label{sv5b}\sum_{Q_1/d<m\le
Q_2/d}\frac1{\phi(dm)}\ll\frac{\log\log Q_1}d\log(Q_2/Q_1).\ee Since
$d\ge\log x$ and $|\mathcal{D}_{\epsilon_1}(x)|\ll_{\epsilon_1}1$,
relations \eqref{sv5}\;and \eqref{sv5b}\;yield that
\be\label{sve5c}\sum_{\substack{Q_1<q\le
Q_2\\q\in\mathcal{MD}_{\epsilon_1}(x)}}\frac1{\phi(q)}\ll_{\epsilon_1}\frac{\log\log
Q_1}{Q_1}+\frac{\log\log Q_1}{\log x}\log(Q_2/Q_1).\ee Also,
\be\label{sve0}\sum_{\substack{Q_1<q\le
Q_2\\(q,a)=1}}\frac1{\phi(q)}\ge\sum_{\substack{Q_1<q\le
Q_2\\(q,a)=1}}\frac1q\gg_a\log(Q_2/Q_1)\asymp\frac{Q_2-Q_1}{Q_1},\ee
uniformly in $Q_1+1\le Q_2\le2Q_1$ with $\{Q_1<q\le
Q_2:(q,a)=1\}\neq\emptyset$. The above inequality, \eqref{sv3}\;and
\eqref{sve5c}\;imply that $$\sum_{\substack{Q_1<q\le
Q_2\\(q,a)=1}}\pi(x;q,a)=(1+\epsilon_1\theta){\rm
li}(x)\sum_{\substack{Q_1<q\le
Q_2\\(q,a)=1}}\frac1{\phi(q)}\Bigl(1+O_{a,\epsilon_1}\Bigl(\frac{\log\log
Q_1}{\log x}+\frac{\log\log Q_1}{Q_2-Q_1}\Bigr)\Bigr).$$ This proves
that \eqref{svl1ee}\;holds. Next, we show that \eqref{svl1e}\;holds.
Fix a large positive constant $M=M(\epsilon,a)$ with $M\ge
x_\epsilon$. If $Q_1\le M$ and $x$ is large enough, then
$$\sum_{\substack{Q_1<q\le Q_2\\(q,a)=1}}\pi(x;q,a)\ge\max_{\substack{Q_1<q\le Q_2\\(q,a)=1}}\pi(x;q,a)\gg_{a,\epsilon}\frac x{\log x}
\asymp_{a,\epsilon}\frac x{\log x}\sum_{\substack{Q_1<q\le
Q_2\\(q,a)=1}}\frac1{\phi(q)},$$ by our assumption that $\{Q_1<q\le
Q_2:(q,a)=1\}\neq\emptyset$ and the Prime Number Theorem for
arithmetic progressions \cite[p. 123]{dav}. So we may suppose that
$Q_1>M$. By \eqref{sv3}, \eqref{sve5c}\;and \eqref{sve0}\;with
$\epsilon_1=\epsilon$ we deduce that
\be\label{sve2}\sum_{\substack{Q_1<q\le
Q_2\\(q,a)=1}}\pi(x;q,a)\ge\frac x{2\log
x}\Bigl(\sum_{\substack{Q_1<q\le
Q_2\\(q,a)=1}}\frac1{\phi(q)}-C_{a,\epsilon}\frac{\log\log
Q_1}{Q_1}\Bigr)\ee for some positive constant $C_{a,\epsilon}$. We
separate two cases. If \be\label{sve3}\sum_{\substack{Q_1<q\le
Q_2\\(q,a)=1}}\frac1{\phi(q)}\ge2C_{a,\epsilon}\frac{\log\log
Q_1}{Q_1},\ee then \eqref{svl1e}\;holds by \eqref{sve2}. So assume
that \eqref{sve3}\;fails. Then, by \eqref{sve0}\;and our assumption
that $Q_2\ge Q_1+\log\log Q_1$, we have that
\be\label{sve4}\frac{\log\log
Q_1}{Q_1}\ll\log\frac{Q_2}{Q_1}\ll_a\sum_{\substack{Q_1<q\le
Q_2\\(q,a)=1}}\frac1q\le\sum_{\substack{Q_1<q\le
Q_2\\(q,a)=1}}\frac1{\phi(q)}\le2C_{a,\epsilon}\frac{\log\log
Q_1}{Q_1}.\ee Also, Lemma \ref{prl1}\;implies that
\be\label{sve5}\sum_{\substack{Q_1<q\le
Q_2\\(q,a)=1}}\pi(x;q,a)\ge\frac x{2\log x}\sum_{\substack{Q_1<q\le
Q_2\\(q,a)=1,\;q\notin\mathcal{MD}_{\epsilon}(x)}}\frac1{\phi(q)}\ge\frac
x{2\log x}\Bigl(\sum_{\substack{Q_1<q\le
Q_2\\(q,a)=1}}\frac1q-\sum_{\substack{Q_1<q\le
Q_2\\q\in\mathcal{MD}_{\epsilon}(x)}}\frac1q\Bigr).\ee By the
argument leading to~\eqref{sve5c} we find that
\be\label{sve4b}\sum_{\substack{Q_1<q\le
Q_2\\q\in\mathcal{MD}_{\epsilon}(x)}}\frac1q\ll_\epsilon\frac1{Q_1}+\frac{\log(Q_2/Q_1)}{\log
x}.\ee Inserting \eqref{sve4}\;and \eqref{sve4b}\;into
\eqref{sve5}\;proves \eqref{svl1e}\;in the case that
\eqref{sve3}\;does not hold too.

\medskip

(b) When $Q_1\le x^{0.41666}<x^{5/12}$ the result follows from part
(a). When $Q_1>x^{0.41666}$ note that $$Q_2-Q_1\ge\exp\{4.532(\log
Q_1)^{1/4}\}\ge\exp\{3.6411(\log x)^{1/4}\}.$$ So following a very
similar argument with the one given in part (a) and using Lemma
\ref{prl3}\;in place of Lemma \ref{prl1}\;we obtain the desired
result.

\medskip

(c) Apply Lemmas \ref{prl5}\;and \ref{prl7}.
\end{proof}

We are now in position to prove Theorem \ref{thmsmall1}.

\begin{proof}[Proof of Theorem \ref{thmsmall1}] First, assume that
$z\le y+y(\log y)^{-2}$. We treat all four parts of the theorem
simultaneously. Let $y_0$ be a large constant, possibly depending on
$s,B,\epsilon$ and $c$, the constant in \eqref{wgrh}, according to
the assumptions of each of the parts (a), (b) and (c). If $y\le
y_0$, then we trivially have that
$$\hs\ge\max_{\substack{y<d\le z\\(d,s)=1}}\pi(x-s;d,-s)\asymp_{y_0}\frac
x{\log x},$$ by our assumption that $\{y<d\le
z:(d,s)=1\}\neq\emptyset$ and the Prime Number Theorem for
arithmetic progressions \cite[p. 123]{dav}. So assume that $y>y_0$.
By the inclusion-exclusion principle, we have that
\be\begin{split}\label{sv0}\sum_{y<d\le
z}&\pi(x-s;d,-s)-\sum_{y<d_1<d_2\le
z}\pi(x-s;[d_1,d_2],-s)\le\hs\le\sum_{y<d\le
z}\pi(x-s;d,-s).\end{split}\ee Lemma \ref{prl4}\;then implies that
\be\label{sv1}\begin{split}\hs=\sum_{\substack{y<d\le
z\\(d,s)=1}}\pi(x-s;d,-s)+O\Bigl(\sum_{y<d_1<d_2\le z}\frac
x{\log(2x/[d_1,d_2])\phi([d_1,d_2])}\Bigl).\end{split}\ee In the sum
over $d_1$ and $d_2$ in the right hand side of \eqref{sv1}\;set
$m=(d_1,d_2)$ and $d_i=mt_i$, $i=1,2$. Since $t_1+1\le t_2$, we get
that $m\le d_2-d_1\le z-y$. Moreover, notice that
$$\log\frac{2x}{[d_1,d_2]}=\log\frac{2x}{t_1t_2m}\ge\log\frac{2xm}{z^2}\gg\frac{\log2m\log x}{\log
y},$$ uniformly in $y\le\sqrt{x}$. Therefore
\be\begin{split}\sum_{y<d_1<d_2\le
z}\frac1{\log(2x/[d_1,d_2])\phi([d_1,d_2])}&\ll\frac{(\log
y)(\log\log y)}{\log x}\sum_{m\le z-y}\frac
1{m\log2m}\sum_{y/m<t_1<t_2\le z/m}\frac
1{t_1t_2}\\
&\le\frac{(\log y)(\log\log y)}{\log x}\sum_{m\le
z-y}\frac1{m\log2m}\biggl(\sum_{y/m<t\le
z/m}\frac 1t\biggr)^2\\
&\ll\frac{\eta^2(\log y)(\log\log y)^2}{\log x}\ll\frac\eta{\log
x}\frac{(\log\log y)^2}{\log y},\nonumber\end{split}\ee which
combined with \eqref{sv1}\;yields that $$\hs=\sum_{\substack{y<d\le
z\\(d,s)=1}}\pi(x-s;d,-s)+O_s\Bigl(\frac{\eta x}{\log
x}\frac{(\log\log y)^2}{\log y}\Bigr).$$ The above estimate together
with Lemma \ref{svl1}\;and the inequality
$$\sum_{\substack{y<d\le z\\(d,s)=1}}\frac1{\phi(d)}\ge\sum_{\substack{y<d\le
z\\(d,s)=1}}\frac1d\gg_s\eta,$$ which holds uniformly in $y+1\le z$
with $\{y<d\le z:(d,s)=1\}\neq\emptyset$, completes the proof of
parts (a), (b) and (c) as well as of part (d) when $z\le y+y(\log
y)^{-2}$. It remains to show part (d)\;when $z>y+y(\log y)^{-2}$, in
which case $(\log y)^{-2}\ll\eta\ll(\log y)^{-\log4+1}$. First, by
\eqref{sv0}\;and Lemma \ref{svl1}(c), we have that
$$\hs\le\sum_{\substack{y<d\le
z\\(d,s)=1}}\pi(x-s;d,-s)+O_s(1)\sim_sf(s)\frac{315\zeta(3)}{2\pi^4}\frac{\eta
x}{\log x},$$ which proves the desired upper bound. For the lower
bound, let $\chi$ be the characteristic function of integers $n$
satisfying $$\Omega(n;y)\le L(y):=2\log\log y+\psi(y)(\log\log
y)^{1/2},$$ where $\psi(y)\to\infty$ as $y\to\infty$ and
$\psi(y)\ll(\log\log y)^{1/6}$. Then the inclusion-exclusion
principle and Lemma \ref{svl1}(c)\;imply that
\be\label{pfe11}\begin{split}\hs\ge\sum_{\substack{p+s\le
x\\\tau(p+s,y,z)\ge1}}\chi(p+s)&\ge\sum_{p+s\le
x}\chi(p+s)\Bigl(\sum_{\substack{d|p+s\\y<d\le
z}}1-\sum_{\substack{[d_1,d_2]|p+s\\y<d_1<d_2\le
z}}1\Bigr)\\
&\ge f(s)\frac{315\zeta(3)}{2\pi^4}\frac{\eta x}{\log
x}(1-o_s(1))-S-S^\prime,\end{split}\ee where
$$S:=\sum_{\substack{p+s\le x\\p\nmid
s}}(1-\chi(p+s))\sum_{\substack{d|p+s\\y<d\le z}}1\quad{\rm
and}\quad S^\prime:=\sum_{\substack{p+s\le x\\p\nmid
s}}\chi(p+s)\sum_{\substack{[d_1,d_2]|p+s\\y<d_1<d_2\le z}}1.$$ To
bound $S$ observe that for every $1\le v\le3/2$ we have that
\be\label{pfe6}\begin{split}S\le v^{-L(y)}\sum_{\substack{p+s\le
x\\p\nmid s}}v^{\Omega(p+s;y)}\sum_{\substack{d|p+s\\y<d\le z}}1
&\le v^{-L(y)}\sum_{\substack{y<d\le
z\\(d,s)=1}}v^{\Omega(d;y)}\sum_{\substack{p+s\le
x\\p\equiv-s\;({\rm
mod}\;d)}}v^{\Omega(\frac{p+s}d;y)}\\
&\ll_s\frac{xv^{-L(y)}(\log y)^{v-1}}{\log x}\sum_{y<d\le
z}\frac{v^{\Omega(d)}}{\phi(d)},\end{split}\ee by Lemma \ref{l2b}.
Writing $$\frac d{\phi(d)}=\sum_{k|d}\frac{\mu^2(k)}{\phi(k)}$$ and
using Theorem 04 in \cite{hall_ten2}\;we find that
\be\label{pfe8}\begin{split}\sum_{y<d\le z}\frac
{v^{\Omega(d)}}{\phi(d)}&=\sum_{k\le
z}\frac{\mu^2(k)v^{\Omega(k)}}{k\phi(k)}\sum_{y/k<f\le
z/k}\frac{v^{\Omega(f)}}f\\
&\ll\sum_{k\le\sqrt{y}}\frac{\mu^2(k)v^{\Omega(k)}}{k\phi(k)}\bigl(\eta(\log
(y/k))^{v-1}+(\log(y/k))^{v-3}\bigr)\\
&\quad\quad+\sum_{\sqrt{y}<k\le
z}\frac{\mu^2(k)v^{\Omega(k)}}{k\phi(k)}(\log y)^{v-1}\\
&\ll\eta(\log y)^{v-1}+\frac{(\log
y)^{v-1}}{y^{1/4}}\sum_{\sqrt{y}<k\le
z}\frac{\mu^2(k)v^{\Omega(k)}}{\sqrt{k}\phi(k)}\\
&\ll\eta(\log y)^{v-1},\end{split}\ee since $\eta\gg(\log y)^{-2}$.
Combining inequalities \eqref{pfe6}\;and \eqref{pfe8}\;we find that
$$S\ll_s\frac{\eta x}{\log x}\frac{(\log y)^{2v-2}}{v^{L(y)}}.$$ Setting
$v=L(y)/2\log\log y$ we deduce that \be\label{pfe10}S\ll_s\frac{\eta
x}{\log
x}\exp\Bigl\{-\frac{\psi(y)^2}4+O\Bigl(\frac{\psi(y)^3}{(\log\log
y)^{1/2}}\Bigr)\Bigr\}=o\Bigl(\frac{\eta x}{\log
x}\Bigr)\quad(y\to\infty).\ee Next, we turn to the estimation of
$S^\prime$. Note that for every $1/10\le v\le1$ we have that
\be\label{pfe12}\begin{split}S^\prime&\le
v^{-L(y)}\sum_{\substack{p+s\le x\\p\nmid s}}v^{\Omega(p+s;y)}
\sum_{\substack{[d_1,d_2]|p+s\\y<d_1<d_2\le z}}1\\
&=v^{-L(y)}\sum_{\substack{y<d_1<d_2\le
z\\(d_1d_2,s)=1}}v^{\Omega([d_1,d_2];y)}\sum_{\substack{p+s\le
x,\;p\nmid s\\p\equiv-s\;({\rm
mod}\;[d_1,d_2])}}v^{\Omega(\frac{p+s}{[d_1,d_2]};y)}.\end{split}\ee
Set $$S_1^\prime=\sum_{\substack{y<d_1<d_2\le
z\\(d_1d_2,s)=1\\(d_1,d_2)>y^2x^{-3/4}}}v^{\Omega([d_1,d_2];y)}\sum_{\substack{p+s\le
x,\;p\nmid s\\p\equiv-s\;({\rm
mod}\;[d_1,d_2])}}v^{\Omega(\frac{p+s}{[d_1,d_2]};y)}$$ and
$$S_2^\prime=\sum_{\substack{y<d_1<d_2\le
z\\(d_1d_2,s)=1\\(d_1,d_2)\le
y^2x^{-3/4}}}v^{\Omega([d_1,d_2];y)}\sum_{\substack{p+s\le
x,\;p\nmid s\\p\equiv-s\;({\rm
mod}\;[d_1,d_2])}}v^{\Omega(\frac{p+s}{[d_1,d_2]};y)}.$$ Put
$m=(d_1,d_2)$ and $d_i=mt_i$ so that $[d_1,d_2]=mt_1t_2$. Note that
$m\le z-y$. First, we deal with $S_1^\prime$. Since
$v^{\Omega(n;y)}\le v^{\omega(n;y)}$ for $v\le1$ and $[d_1,d_2]\le
2x^{3/4}$ in the range of $S_1^\prime$, Lemma \ref{l2}\;gives us
that
\be\label{pfe13}\begin{split}S_1^\prime&\ll_s\sum_{y^2x^{-3/4}<m\le
z-y}\quad\sum_{y/m<t_1<t_2\le z/m}\frac{v^{\Omega(mt_1t_2;y)}x(\log
y)^{v-1}}{\phi(mt_1t_2)\log x}\\
&\ll\frac{x(\log y)^{v-1}\log\log y}{\log x}\sum_{m\le
z-y}\frac{v^{\Omega(m)}}m\Bigl(\sum_{y/m<t\le
z/m}\frac{v^{\Omega(t)}}t\Bigr)^2,\end{split}\ee uniformly in
$1/10\le v\le1$, since $\Omega(n;y)\ge\Omega(n)-2$ for $n\le y^3$.
By relation (2.39) in \cite{hall_ten2}\;we have
\be\label{pfe16}\sum_{y/m<t\le
z/m}\frac{v^{\Omega(t)}}t\ll\eta\Bigl(\log\frac1\eta\Bigr)^{1-v}\Bigl(\log\frac
ym\Bigr)^{v-1}\asymp\eta(\log\log y)^{1-v}\Bigl(\log\frac
ym\Bigr)^{v-1},\ee which, combined with \eqref{pfe13}, yields that
\be\label{pfe14}S_1^\prime\ll_s\frac{\eta^2x(\log y)^{v-1}(\log\log
y)^{3-2v}}{\log x}\sum_{m\le
z-y}\frac{v^{\Omega(m)}}m\Bigl(\log\frac ym\Bigr)^{2v-2}.\ee We now
estimate $S_2^\prime$. First, for $d_1,d_2$ in the range of
summation of $S_2^\prime$ we have $x(d_1,d_2)/y^2\le x^{1/4}$, by
definition. So if $S_2^\prime$ is a non-empty sum, we must have that
$y\ge x^{3/8}$ and $m=(d_1,d_2)\le x^{1/4}\le y^{2/3}$.
Consequently,
$$S_2^\prime\le\sum_{\substack{m\le y^{2/3}\\(m,s)=1}}\sum_{\substack{y/m<t_1<t_2\le
z/m\\(t_1t_2,s)=1}}\sum_{\substack{p+s\le x,\;p\nmid
s\\p\equiv-s\;({\rm mod}\;mt_1t_2)}}v^{\Omega(p+s;y)}.$$ Set
$p+s=mt_1t_2k$. Then we have that $k\le x/(yt_1)$,
$\frac{z-y}m\ge(\frac zm)^{1/2}$ and $mt_1k\le(t_1kz)^{7/8}$. Also,
note that $\Omega(n;y)\ge\Omega(n)-2$ for $n\le x$, since $y\ge
x^{3/8}$. So
\be\begin{split}S_2^\prime&\le\frac1{v^2}\sum_{\substack{m\le
y^{2/3}\\(m,s)=1}}\sum_{\substack{y/m<t_1\le
z/m\\(t_1,s)=1}}\sum_{\substack{k\le
x/(yt_1)\\(k,s)=1}}v^{\Omega(mt_1k)}\sum_{\substack{t_1ky<p+s\le
t_1kz\\p\equiv-s\;({\rm
mod}\;mt_1k)}}v^{\omega(\frac{p+s}{mt_1k})}\nonumber\\
&\ll_s\sum_{m\le y^{2/3}}\sum_{y/m<t_1\le z/m}\sum_{k\le
x/(yt_1)}\frac{v^{\Omega(mt_1k)}t_1k(z-y)(\log(t_1kz))^{v-2}}{\phi(mt_1k)}\\
&\ll\frac{\eta y(\log y)^{v-1}\log\log y}{\log x}\sum_{m\le
y^{2/3}}\frac{v^{\Omega(m)}}m\sum_{y/m<t_1\le
z/m}v^{\Omega(t_1)}\sum_{k\le xm/y^2}v^{\Omega(k)}\\
&\ll\frac{\eta x(\log y)^{v-1}\log\log y}{y\log x}\sum_{m\le
y^{2/3}}v^{\Omega(m)}(\log2m)^{v-1}\sum_{y/m<t_1\le
z/m}v^{\Omega(t_1)},\end{split}\ee uniformly in $1/10\le v\le 1$, by
Lemma \ref{l2}\;and Theorem 01 in \cite{hall_ten2}. Also,
\be\begin{split}\sum_{y/m<t_1\le z/m}v^{\Omega(t_1)}\asymp\frac
ym\sum_{y/m<t_1\le z/m}\frac{v^{\Omega(t_1)}}{t_1}
&\ll\frac{\eta y(\log\log y)^{1-v}}m\Bigl(\log\frac ym\Bigr)^{v-1}\\
&\asymp\frac{\eta y(\log y)^{v-1}(\log\log
y)^{1-v}}m,\nonumber\end{split}\ee by \eqref{pfe16}, since $m\le
y^{2/3}$. Hence \be\label{pfe15}S_2^\prime\ll_s\frac{\eta^2x(\log
y)^{2v-2}(\log\log y)^{2-v}}{\log x}\sum_{m\le
y^{2/3}}\frac{v^{\Omega(m)}}m(\log2m)^{v-1}.\ee Inequalities
\eqref{pfe12}, \eqref{pfe14}\;and \eqref{pfe15}\;imply that
$$S^\prime\ll_s\frac{\eta^2xv^{-L(y)}(\log\log y)^{3-2v}}{\log
x}\sum_{m\le z-y}\frac{v^{\Omega(m)}}m(\log2m)^{v-1}\Bigl(\log\frac
ym\Bigr)^{2v-2}.$$ If we set $v=1/2$, by partial summation and the
estimate $\sum_{n\le x}v^{\Omega(n)}\ll x(\log2x)^{v-1}$ we find
that
$$\sum_{m\le z-y}\frac{v^{\Omega(m)}}m(\log m)^{v-1}\Bigl(\log\frac
ym\Bigr)^{2v-2}\ll\frac{\log\log y}{\log y}$$ and consequently
$$S^\prime\ll_s\frac{\eta^2x}{\log x}(\log
y)^{\log4-1}2^{\psi(y)\sqrt{\log\log y}}(\log\log y)^3.$$ Lastly,
putting $\psi(y)=\min\{\xi,(\log\log y)^{1/6}\}$ yields that
$$S^\prime\ll_s\frac{\eta x}{\log x}\frac{(\log\log y)^3}{e^{(1-\log2)\xi\sqrt{\log\log y}}}=o\Bigl(\frac{\eta x}{\log
x}\Bigr).$$ Inserting the above estimate and \eqref{pfe10}\;into
\eqref{pfe11}\;gives us that
$$\hs\ge f(s)\frac{315\zeta(3)}{2\pi^4}\frac{\eta x}{\log x}(1-o_s(1)),$$
which completes the proof of part (d) in the case that $z>y+(\log
y)^{-2}$ too.
\end{proof}

%%%%%%%%%%%%%%%%%%%%%%%%%%%%%%%%%%%%%%%%%%%%%%
%%%%%%%%%%%%%%%%%%%%%%%%%%%%%%%%%%%%%%%%%%%%%%
%%%%%%%%%%%%%%%%%%%%%%%%%%%%%%%%%%%%%%%%%%%%%%
%%%%%%%%%%%%%%%%%%%%%%%%%%%%%%%%%%%%%%%%%%%%%%
%%%%%%%%%%%%%%%%%%%%%%%%%%%%%%%%%%%%%%%%%%%%%%

\section{Intermediate and large values of $\eta$}

To prove Theorem \ref{thminter1}\;we reduce the counting in $\hsd$
to the estimation of a sum involving $L(a;\eta)$ as done in
\cite{kf}\;for bounding $H(x,y,z)$; then we apply Lemma \ref{prl8}.
First, we show the following result. Theorem \ref{thminter1}\;will
then follow as an easy corollary.

\renewcommand{\labelenumi}{(\arabic{enumi})}

\begin{theorem}\label{imvl1} Fix $s\in\SZ\setminus\{0\}$ and $B\ge 2$. Let $x\ge x_0(s,B)$ and $3\le y+1\le z$ with $z\le
x^{2/3}$, $\eta\in[(\log y)^{-B},\frac{\log y}{100}]$ and $\{y<d\le
z:(d,s)=1\}\neq\emptyset\}$. Then for $$\frac x{(\log
x)^B}\le\Delta\le\frac x2$$ we have that
$$\hsd\gg_{s,B}\frac\Delta x\frac{H(x,y,z)}{\log x}.$$
\end{theorem}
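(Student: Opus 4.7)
The plan is to imitate Ford's lower-bound construction for $H(x,y,z)$ underlying Lemma \ref{prl8}, replacing the role of ``integers with the prescribed divisor structure'' by ``primes $p$ such that $p+s$ has that structure''. The key arithmetic input will be Lemma \ref{prl5} (Bombieri--Friedlander--Iwaniec), which lets us replace prime counts in arithmetic progressions by their expected main terms on average, provided the modulus factors as $r\cdot q$ with $r \le x^{1/10-\epsilon}$ and $rq \le x/(\log x)^{B'}$.

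Concretely, I would fix a small $\epsilon>0$ and, for each squarefree $a$ with prime factors in a controlled window $(P_0, y^\epsilon]$ (with $P_0$ large but fixed) and each divisor $d \mid a$ with $\log d \in \mathscr{L}(a;\eta)$, pick a prime $b$ in the resulting log-interval of length $\eta$ so that $ab \in (y,z]$ and $(ab,s)=1$. Running Ford's sieve decomposition from Lemmas 4.1--4.9 of \cite{kf} on the outer variable $m = n/(ab)$, but with $n = abm$ required to lie in $P_s \cap (x-\Delta, x]$ rather than $\SN \cap [1,x]$, produces a lower bound of the shape
\begin{equation*}
\hsd \;\gg\; \sum_{(a,b)} W(a,b)\Bigl(\pi(x-s;ab,-s) - \pi(x-s-\Delta;ab,-s)\Bigr),
\end{equation*}
where $W(a,b)$ encodes the sieve and uniqueness conditions. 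The modulus $ab$ factors naturally as $a\cdot b$ with $a \le y^\epsilon \le x^{1/10-\epsilon'}$ and $ab \le z \le x^{2/3}$, so Lemma \ref{prl5} applies and replaces each bracketed difference above by $\Delta/(\phi(ab)\log x)$ up to a total error of size $O_{s,A}(x/(\log x)^A)$ for any fixed $A$; choosing $A$ large in terms of $B$ absorbs this error, since $\Delta \ge x/(\log x)^B$.

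Summing the resulting main term over $(a,b)$ yields a quantity which, by the same calculation that produced Lemma \ref{prl8}, equals $\Delta/(x\log x)$ times the main term of $H(x,y,z)$, hence the asserted bound $\gg (\Delta/x)\cdot H(x,y,z)/\log x$. The main obstacle is the faithful transcription of Ford's sieve decomposition: one must construct $W(a,b)$ so that every shifted prime $p+s \in (x-\Delta,x]$ with a divisor in $(y,z]$ is counted $\gg 1$ times and so that the modulus $ab$ always meets the factorization hypothesis of Lemma \ref{prl5}. A subsidiary point is to check that the coprimality constraint $(ab,s)=1$ removes at most a constant fraction of the sum (this uses $f(s) > 0$ and that the primes dividing $s$ form a fixed finite set) and that the one-dimensional linear sieve on the outer variable $m$ -- now representing a shifted prime -- goes through unchanged, since that sieve is equally compatible with $P_s$ as with $\SN$.
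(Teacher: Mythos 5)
Your overall strategy (transplant Ford's lower-bound construction, control the error terms in arithmetic progressions on average via Lemma \ref{prl5}, then convert the main term into $H(x,y,z)$ through Lemma \ref{prl8}) is indeed the paper's strategy, but the pivotal step ``the modulus $ab$ factors as $a\cdot b$ with $a\le y^\epsilon$, so Lemma \ref{prl5} applies'' has a genuine gap, and repairing it is where nearly all of the actual work lies. In Lemma \ref{prl5} the absolute value sits \emph{outside} the sum over the large variable: the input from Bombieri--Friedlander--Iwaniec only controls the complete, unweighted sum $\sum_{q\le Q,\,(q,a)=1}\bigl(\pi(x;rq,a)-\mathrm{li}(x)/\phi(rq)\bigr)$, with only the small factor $r\le x^{1/10-\epsilon}$ allowed to carry an absolute value (equivalently, arbitrary bounded weights). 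In your construction the large factor $b$ of the modulus is restricted to primes, and more generally your weights $W(a,b)$ weight the inner variable; the inner sum is then no longer of the required shape, and since $ab$ can be as large as $z\le x^{2/3}$, far beyond the Bombieri--Vinogradov range, no unconditional result evaluates $\pi(x-s;ab,-s)-\pi(x-s-\Delta;ab,-s)$ for such weighted or sparse families of moduli, either individually or after this kind of averaging. (Choosing one prime $b$ per interval also presupposes a lower bound for primes in progressions to these huge moduli, which is exactly what is unavailable; and note that for a lower bound on $\hsd$ you need the multiplicity of your representation bounded \emph{above}, not ``counted $\gg1$ times''.)

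The paper's proof is organized precisely so that no primality or roughness condition is ever imposed on the variable summed inside the BFI average. The divisor in $(y,z]$ is written as $dq$ with $d\mid a$, $a\le w=z^{1/20v}$ squarefree and $q$ rough ($P^-(q)>w$); the condition $P^-(q)>w$ is then replaced by fundamental-lemma weights $\lambda^{\pm}$ of Lemma \ref{fundle}(a) supported on moduli $c\le z^{1/20}$, and the roughness of the cofactor $(p+s)/(aqs_1)$ is detected from below by the sieve $\rho$ of Lemma \ref{fundle}(b) with moduli $m\le w^3$. After these substitutions the error term takes the form of sums of $\bigl\lvert\sum_{g\le t/f}E(x;rg,-s)\bigr\rvert$ with $r=ams_1c\le z^{1/8}\le x^{1/10-\epsilon}$ small and $g$ running smoothly over an initial segment, which is exactly the shape Lemma \ref{prl5} handles (after a subdivision in $f$, Lemma \ref{prl4}, Cauchy--Schwarz, and Lemma \ref{l2} for discarded terms); the covering Lemma \ref{prl9} and the bound $\tau(qb_1)\le2^{20v}v^2$ on the multiplicity supply the remaining bookkeeping, and only then does Lemma \ref{prl8} convert $\sum_a L(a;\eta)/\phi(a)$ into $H(x,y,z)$. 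Without the $\lambda^{\pm},\rho$ smoothing step, the appeal to Lemma \ref{prl5} in your sketch is not valid, so the proposal as written does not go through.
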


\begin{proof} Fix $\Delta\in(x(\log x)^{-B},x/2]$ and set $s_1=2/(s,2)$. Let $y_0=y_0(s,B)$ be a
large positive constant. If $y\le y_0$, then
\be\begin{split}\hsd&\ge\max_{\substack{y<d\le
z\\(d,s)=1}}\Bigl(\pi(x-s;d,-s)-\pi(x-\Delta-s;d,-s)\Bigr)\nonumber\\
&\gg_{y_0}\frac\Delta{\log x}\asymp_{y_0}\frac\Delta
x\frac{H(x,y,z)}{\log x},\end{split}\ee by the Prime Number Theorem
for arithmetic progressions \cite[p. 123]{dav}\;and our assumption
that $\{y<d\le z:(d,s)=1\}\neq\emptyset$. Suppose now that $y>y_0$.
Fix an integer $v=v(s)\ge3$ and set $w=z^{1/20v}$. We will choose
$v$ later; till then, all implied constants will be independent of
$v$. Consider integers $n=aqb_1b_2s_1\in(x-\Delta,x]$ with
\begin{enumerate}\item $a\le w$, $\mu^2(a)=1$ and $(a,2s)=1$;
\item $\log(y/q)\in\mathcal{L}(a;\eta)$, $P^-(q)>w$ and $(q,2s)=1$;
\item $b_1\in\mathscr{P}(w,z)$ and $\tau(b_1)\le v^2$;
\item $P^-(b_2)>z$;
\item $n-s$ is prime.
\end{enumerate} Condition (2) implies that there exists $d|a$ such that $y/d<q\le z/d$; in particular, we
have that $\tau(n,y,z)\ge 1$ and thus $n$ is counted by $\hsd$.
Also, $\Omega(q)\le\frac{\log z}{\log w}=20v$ and therefore
$$\tau(qb_1)\le2^{\Omega(q)}\tau(b_1)\le2^{20v}v^2.$$ Since
each $n$ has at most $\tau(qb_1)\le2^{20v}v^2$ representations of
this form, we find that
\be\label{imve12}\begin{split}2^{20v}v^2\bigl(\hsd\bigr)&\ge\sum_{\substack{a\le
w\\\mu^2(a)=1\\(a,2s)=1}}\sum_{\substack{\log(y/q)\in\mathcal{L}(a;\eta)\\P^-(q)>w\\(q,2s)=1}}\sum_{\substack{(x-\Delta)/aqs_1<b_1b_2\le
x/aqs_1\\b_1\in\mathscr{P}(w,z),\;P^-(b_2)>z\\\tau(b_1)\le v^2\\aqb_1b_2s_1-s\;\text{prime}}}1\\
&=:\sum_{\substack{a\le
w\\\mu^2(a)=1\\(a,2s)=1}}\sum_{\substack{\log(y/q)\in\mathcal{L}(a;\eta)\\P^-(q)>w\\(q,2s)=1}}B_0(a,q).\end{split}\ee
Given $a$ and $q$ as above, put
$$B(a,q)=\sum_{\substack{(x-\Delta)/aqs_1<b\le
x/aqs_1\\P^-(b)>w\\aqbs_1-s\;\text{prime}}}1\quad\text{and}\quad
R(a,q)=B(a,q)-B_0(a,q).$$ Given $b$ with $P^-(b)>w$, write
$b=b_1b_2$ with $b_1\in\mathscr{P}(w,z)$ and $P^-(b_2)>z$ and put
$F(b)=\tau(b_1)$. Then, for fixed $a$ and $q$ with $(aq,2s)=1$, we
have that
$$R(a,q)\le\frac1{v^2}\sum_{\substack{(x-\Delta)/aqs_1<b\le
x/aqs_1\\P^-(b)>w\\aqbs_1-s\;\text{prime}}}F(b)=\frac1{v^2}\sum_{\substack{x-\Delta<p+s\le
x\\p\equiv-s\;({\rm
mod}\;aqs_1)\\P^-(\frac{p+s}{aqs_1})>w}}F\Bigl(\frac{p+s}{aqs_1}\Bigr)\ll_s\frac1v\frac{\Delta}{\phi(aq)\log
x\log w},$$ by Lemma \ref{l2}. Inserting the above estimate into
\eqref{imve12}\;yields that
\be\label{imve13}\begin{split}2^{20v}v^2\bigl(H(x,y,z;P_s)-H(x&-\Delta,y,z;P_s)\bigr)\ge\sum_{\substack{a\le
w\\\mu^2(a)=1\\(a,2s)=1}}\sum_{\substack{\log(y/q)\in\mathcal{L}(a;\eta)\\P^-(q)>w\\(q,2s)=1}}B(a,q)\\
&-O_s\Bigl(\frac1v\frac \Delta{\log x\log w}\sum_{\substack{a\le
w\\\mu^2(a)=1\\(a,2s)=1}}\sum_{\substack{\log(y/q)\in\mathcal{L}(a;\eta)\\P^-(q)>w\\(q,2s)=1}}\frac1{\phi(aq)}\Bigr).\end{split}\ee
Next, we need to approximate the characteristic function of integers
$n$ with $P^-(n)>w$ with a `smoother' function, the reason being
that the error term $\pi(x;rq,a)-{\rm li}(x)/\phi(rq)$ in Lemma
\ref{prl5}\;is weighted with the smooth function 1 as $q$ runs
through $[1,Q]\cap\SN$. To do this we appeal to Lemma
\ref{fundle}(a) with $Z=w$, $D=z^{1/20}$ and $\kappa=2$. Then
\be\label{imve1}\begin{split}2^{20v}v^2\bigl(&\hsd\bigr)\\&\ge\sum_{\substack{a\le
w\\\mu^2(a)=1,(a,2s)=1}}\sum_{\substack{\log(y/q)\in\mathcal{L}(a;\eta)\\(q,2s)=1}}(\lambda^-*1)(q)B(a,q)-O_s(\mathscr{R}_1)\\
&\ge\sum_{\substack{a\le
w\\\mu^2(a)=1,(a,2s)=1}}\sum_{\substack{\log(y/q)\in\mathcal{L}(a;\eta)\\(q,2s)=1}}(\lambda^+*1)(q)B(a,q)-O_s(\mathscr{R}_1+\mathscr{R}_2),\end{split}\ee
where $$\mathscr{R}_1:=\frac1v\frac\Delta{\log x\log
w}\sum_{\substack{a\le
w\\\mu^2(a)=1,(a,2s)=1}}\sum_{\substack{\log(y/q)\in\mathcal{L}(a;\eta)\\(q,2s)=1}}\frac{(\lambda^+*1)(q)}{\phi(aq)}$$
and $$\mathscr{R}_2:=\sum_{\substack{a\le
w\\\mu^2(a)=1,(a,2s)=1}}\sum_{\substack{\log(y/q)\in\mathcal{L}(a;\eta)\\(q,2s)=1}}\bigl((\lambda^+*1)(q)-(\lambda^-*1)(q)\bigr)B(a,q).$$
We now bound $\mathscr{R}_2$ from above. For fixed $a$ and $q$ with
$(aq,2s)=1$ we have $$B(a,q)\ll_s\frac\Delta{\phi(aq)\log x\log
w},$$ by the arithmetic form of the large sieve \cite{MonV}\;or
Lemma \ref{l2}. Since $\lambda^+*1-\lambda^-*1$ is always
non-negative, we get that
\be\label{imve2}\mathscr{R}_2\ll\frac\Delta{\log x\log
w}\sum_{\substack{a\le
w\\\mu^2(a)=1,(a,2s)=1}}\sum_{\substack{\log(y/q)\in\mathcal{L}(a;\eta)\\(q,2s)=1}}\frac{(\lambda^+*1)(q)-(\lambda^-*1)(q)}{\phi(aq)}.\ee
Fix $a\le w$ with $(a,2s)=1$ and let $\{I_r\}_{r=1}^R$ be the
collection of the intervals $[\log d-\eta,\log d)$ with $d|a$. Then
for $I=[\log d-\eta,\log d)$ in this collection Lemmas
\ref{prl7}\;and \ref{fundle}(a) imply that
\be\label{imve4}\begin{split}\sum_{\substack{\log(y/q)\in
3I\\(q,2s)=1}}&\frac{(\lambda^+*1)(q)-(\lambda^-*1)(q)}{\phi(aq)}\\
=&\sum_{\substack{c\le
z^{1/20}\\(c,2s)=1}}(\lambda^+(c)-\lambda^-(c))\sum_{\substack{e^{-\eta}y/cd<m\le
e^{2\eta}y/cd\\(m,2s)=1}}\frac1{\phi(acm)}\\
=&\frac{315\zeta(3)}{2\pi^4}\frac{g(2as)\phi(2s)}{2|s|\phi(a)}\sum_{\substack{c\le
z^{1/20}\\(c,2s)=1}}\frac{\lambda^+(c)-\lambda^-(c)}{c}\frac{g(ac)}{g(a)}\frac{c\phi(a)}{\phi(ac)}\Bigl(3\eta+O_s(y^{-2/3})\Bigr)\\
\ll_s&\frac{\eta}{e^v\phi(a)}\prod_{\substack{p\le w\\p\nmid
2s,p|a}}\Bigl(1-\frac1p\Bigr)\prod_{\substack{p\le w\\p\nmid
2sa}}\Bigl(1-\frac{g(p)}{p-1}\Bigr)+\frac1{\phi(a)\sqrt{y}}\asymp_s\frac1{e^v}\frac{\eta}{\phi(a)\log
w},\end{split}\ee provided that $y_0$ is large enough, since
$g(p)p/(p-1)\le\min\{p-1,2\}$ for $p\ge3$, $g(p)=1+O(p^{-2})$ and
$g(a)\asymp1$. By Lemma \ref{prl9}, there exists a sub-collection
$\{I_{r_t}\}_{t=1}^T$ of mutually disjoint intervals so that
$$\bigcup_{t=1}^T3I_{r_t}\supset\mathcal{L}(a;\eta).$$ Consequently
\be\begin{split}\sum_{\substack{\log(y/q)\in\mathcal{L}(a;\eta)\\(q,2s)=1}}\frac{(\lambda^+*1)(q)-(\lambda^-*1)(q)}{\phi(aq)}
&\le\sum_{t=1}^T\sum_{\substack{\log(y/q)\in3I_{r_t}\\(q,2s)=1}}\frac{(\lambda^+*1)(q)-(\lambda^-*1)(q)}{\phi(aq)}\nonumber\\
&\ll_s\frac1{e^v}\frac{T\eta}{\phi(a)\log w}\\
&=\frac1{e^v}\frac1{\phi(a)\log w}{\rm
meas}\Bigl(\bigcup_{t=1}^TI_{r_t}\Bigr)\\
&\le\frac1{e^v}\frac{L(a;\eta)}{\phi(a)\log w},\end{split}\ee since
$\lambda^+*1-\lambda^-*1$ is always non-negative. By the above
inequality and~\eqref{imve2} we get that
\be\label{imve3}\mathscr{R}_2\ll_s\frac1{e^v}\frac \Delta{\log
x\log^2w}\sum_{\substack{a\le
w\\\mu^2(a)=1,(a,2s)=1}}\frac{L(a;\eta)}{\phi(a)}.\ee We now bound
from the below the sum
$$\mathscr{S}:=\sum_{\substack{a\le
w\\\mu^2(a)=1,(a,2s)=1}}\sum_{\substack{\log(y/q)\in\mathcal{L}(a;\eta)\\(q,2s)=1}}(\lambda^+*1)(q)B(a,q).$$
We fix $a$ and $q$ with $(aq,2s)=1$ and seek a lower bound on
$B(a,q)$. By Lemma \ref{fundle}(b) applied with $Z=w$ and $D=w^3$,
there exists a sequence $\{\rho(d)\}_{d\le w^3}$ such that $\rho*1$
is bounded above by the characteristic function of integers $b$ with
$P^-(b)>w$. So, if we put
$$E(x;k,a)=\pi(x-s;k,a)-\pi(x-\Delta-s;k,a)-\frac{{\rm
li}(x-s)-{\rm li}(x-\Delta-s)}{\phi(k)},$$ then Lemma
\ref{fundle}(b) and the fact that $2|s_1s$ imply that
\be\begin{split}B(a,q)&=\sum_{\substack{x-\Delta<p+s\le
x\\p\equiv-s\;({\rm
mod}\;aqs_1)\\P^-((p+s)/aqs_1)>w}}1\ge\sum_{\substack{x-\Delta<p+s\le
x\\p\equiv-s\;({\rm
mod}\;aqs_1)\\p\nmid s}}(\rho*1)\Bigl(\frac{p+s}{aqs_1}\Bigr)\nonumber\\
&=\sum_{\substack{m\le w^3\\(m,s)=1}}\rho(m)\bigl(\pi(x-s;aqs_1m,-s)-\pi(x-s-\Delta;aqs_1m,-s)\bigr)+O_s(1)\\
&=\bigl({\rm li}(x-s)-{\rm li}(x-s-\Delta)\bigr)\sum_{\substack{m\le
w^3\\(m,s)=1}}\frac{\rho(m)}{\phi(aqs_1m)}+O_s(1)+\mathscr{R}_{aqs_1}^\prime\\
&\ge C_s\frac\Delta{\phi(aq)\log x\log
w}+\mathscr{R}_{aqs_1}^\prime\end{split}\ee for some positive
constant $C_s$, where
$$\mathscr{R}_{aqs_1}^\prime=\sum_{\substack{m\le
w^3\\(m,s)=1}}\rho(m)E(x;aqs_1m,-s).$$ Since $\lambda^+*1$ is always
non-negative, we deduce that \be\label{imv9}\mathscr{S}\ge
C_s\frac\Delta{\log x\log w}\sum_{\substack{a\le
w\\\mu^2(a)=1,(a,2s)=1}}\sum_{\substack{\log(y/q)\in\mathcal{L}(a;\eta)\\(q,2s)=1}}\frac{(\lambda^+*1)(q)}{\phi(aq)}
+\mathscr{R}^\prime,\ee where
$$\mathscr{R}^\prime=\sum_{\substack{a\le w\\\mu^2(a)=1,(a,2s)=1}}
\sum_{\substack{\log(y/q)\in\mathcal{L}(a;\eta)\\(q,2s)=1}}(\lambda^+*1)(q)\mathscr{R}^\prime_{aqs_1}.$$
Combining \eqref{imve1}, \eqref{imve3}\;and \eqref{imv9}\;we get
that
\be\label{imve14}\begin{split}2^{20v}v^2\bigl(&\hsd\bigr)\\
&\ge\frac{C_s}2\frac{\Delta}{\log x\log w}\sum_{\substack{a\le
w\\\mu^2(a)=1,(a,2s)=1}}\sum_{\substack{\log(y/q)\in\mathcal{L}(a;\eta)\\(q,2s)=1}}\frac{(\lambda^+*1)(q)}{\phi(aq)}\\
&\quad-O_s\Bigl(|\mathscr{R}^\prime|+\frac1{e^v}\frac \Delta{\log
x\log^2w}\sum_{\substack{a\le
w\\\mu^2(a)=1,(a,2s)=1}}\frac{L(a;\eta)}{\phi(a)}\Bigr),\end{split}\ee
provided that $v$ is large enough. Fix now $a\le w$ with $(a,2s)=1$
and look at the sum over $q$ on the right hand side of
\eqref{imve14}. Let $\{I_r\}_{r=1}^R$ be the collection of the
intervals $[\log d-\eta,\log d)$ with $d|a$. Then, using a similar
argument with the one leading to \eqref{imve4}, we find that for $I$
in this collection $$\sum_{\substack{\log(y/q)\in
I\\(q,2s)=1}}\frac{(\lambda^+*1)(q)}{\phi(aq)}\gg_s\frac{\eta}{\phi(a)\log
w},$$ provided that $y_0$ and $v$ are large enough. Moreover, by
Lemma \ref{prl9}, there exists a sub-collection
$\{I_{r_t}\}_{t=1}^T$ of mutually disjoint intervals so that $$\eta
T={\rm Vol}\Bigl(\bigcup_{t=1}^TI_{r_t}\Bigr)\ge\frac 13{\rm
Vol}\Bigl(\bigcup_{r=1}^RI_r\Bigr)=\frac{L(a;\eta)}3.$$ Hence
$$\sum_{\substack{\log(y/q)\in\mathcal{L}(a;\eta)\\(q,2s)=1}}\frac{(\lambda^+*1)(q)}{\phi(aq)}\ge\sum_{t=1}^T\sum_{\substack{\log(y/q)\in
I_{r_t}\\(q,2s)=1}}\frac{(\lambda^+*1)(q)}{\phi(aq)}\gg_s\frac{\eta
T}{\phi(a)\log w}\gg\frac{L(a;\eta)}{\phi(a)\log w},$$ where we used
the fact that $\lambda^+*1$ is non-negative. Inserting this
inequality into \eqref{imve14}\;and choosing a large enough $v$ we
conclude that \be\label{imve15}\hsd\ge M_s\frac\Delta{\log
x\log^2y}\sum_{\substack{a\le
w\\\mu^2(a)=1,(a,2s)=1}}\frac{L(a;\eta)}{\phi(a)}-O_s(|\mathscr{R}^\prime|)\ee
for some positive constant $M_s$. Furthermore, note that if $a$ is
squarefree, we may uniquely write $a=db$, where $d|2s$,
$\mu^2(d)=\mu^2(b)=1$ and $(b,2s)=1$, in which case
$L(a;\eta)\le\tau(d)L(b;\eta)$, by inequality \eqref{pre2}. Thus
$$\sum_{\substack{a\le
w\\\mu^2(a)=1}}\frac{L(a;\eta)}{\phi(a)}\le\sum_{d|2s,\mu^2(d)=1}\frac{\tau(d)}{\phi(d)}\sum_{\substack{b\le
w/d\\\mu^2(b)=1\\(b,2s)=1}}\frac{L(b;\eta)}{\phi(b)}\le\Bigl(\sum_{d|s}\frac{\tau(d)}{\phi(d)}\Bigr)\sum_{\substack{b\le
w\\\mu^2(b)=1\\(b,2s)=1}}\frac{L(b;\eta)}{\phi(b)},$$ which,
combined with \eqref{imve15}, Lemma \ref{prl8}\;and the trivial
inequality $\phi(a)\le a$, implies that $$\hsd\ge
M_s^\prime\frac\Delta x\frac{H(x,y,z)}{\log
x}-O_s(|\mathscr{R}^\prime|)$$ for some positive constant
$M_s^\prime$. In addition, observe that
$$H(x,y,z)\gg\frac x{(\log y)^B},$$ by Theorem \ref{thm1}\;and our assumption that
$\eta\ge(\log y)^{-B}$. Hence
$$\hsd\gg_s\frac\Delta x\frac{H(x,y,z)}{\log
x}\Bigl(1-O_s\Bigl(\frac{(\log x)(\log
y)^B|\mathscr{R}^\prime|}{\Delta}\Bigr)\Bigr).$$ So it suffices to
show that
$$|\mathscr{R}^\prime|\ll_s\frac\Delta{(\log x)(\log y)^{B+1}}.$$ For
every $a\in\SN$ there is a unique set $D_a$ of pairs $(d,d^\prime)$
with $d\le d^\prime$, $d|a$ and $d^\prime|a$ such that
$$\mathcal{L}(a;\eta)=\bigcup_{(d,d^\prime)\in D_a}[\log
d-\eta,\log d^\prime)$$ and the intervals $[\log d-\eta,\log
d^\prime)$ for $(d,d^\prime)\in D_a$ are mutually disjoint. With
this notation we have that
\be\begin{split}|\mathscr{R}^\prime|&=\Bigl\lvert\sum_a\sum_m\rho(m)\sum_{(d,d^\prime)\in
D_a}\sum_{\substack{y/d^\prime<q\le
z/d\\(q,2s)=1}}(\lambda^+*1)(q)E(x;ams_1q,-s)\Bigr\rvert\\
&=\Bigl\lvert\sum_a\sum_m\rho(m)\sum_{(d,d^\prime)\in
D_a}\sum_c\lambda^+(c)\sum_{\substack{y/cd^\prime<g\le
z/cd\\(g,2s)=1}}E(x;ams_1cg,-s)\Bigr\rvert\\
&\le\sum_{\substack{a\le w\\(a,2s)=1}}\sum_{\substack{m\le
w^3\\(m,s)=1}}\sum_{\substack{c\le
z^{1/20}\\(c,2s)=1}}\sum_{(d,d^\prime)\in
D_a}\Bigl\lvert\sum_{\substack{y/cd^\prime<g\le
z/cd\\(g,2s)=1}}E(x;ams_1cg,-s)\Bigr\rvert.\nonumber\end{split}\ee
So writing the inner sum as a difference of two sums we obtain that
\be\label{imv17}\begin{split}|\mathscr{R}^\prime|&\le2\sup_{y\le
t\le z}\biggl\{\sum_{\substack{a\le
w\\(a,2s)=1}}\sum_{\substack{m\le w^3\\(m,s)=1}}\sum_{\substack{c\le
z^{1/20}\\(c,2s)=1}}\sum_{f|ams_1c}\Bigl\lvert\sum_{\substack{g\le t/f\\(g,2s)=1}}E(x;ams_1cg,-s)\Bigr\rvert\biggr\}\\
&\le2\sup_{y\le t\le z}\biggl\{\sum_{\substack{r\le
2z^{7/60}\\(r,s)=1}}\tau_3(r)\sum_{f|r}\Bigl\lvert\sum_{\substack{g\le
t/f\\(g,2s)=1}}E(x;rg,-s)\Bigr\rvert\biggr\}\\
&\le4\sup_{y\le t\le z}\biggl\{\sum_{\substack{r\le
z^{1/8}\\(r,s)=1}}\tau_3(r)\sum_{f|r}\Bigl\lvert\sum_{\substack{g\le
t/f\\(g,s)=1}}E(x;rg,-s)\Bigr\rvert\biggr\},\end{split}\ee since
$w^4z^{1/20}\le z^{7/60}\le z^{1/8}/4$ for all $v\ge3$. Put
$\mu=1+(\log y)^{-B-7}$ and cover the interval $[1,z^{1/8}]$ by
intervals of the form $[\mu^n,\mu^{n+1})$ for $n=0,1,\dots,N$. We
may take $N\ll(\log y)^{B+8}$. Since
$|E(x;k,-s)|\ll\frac\Delta{\phi(k)\log x}$ for $k\le z^{9/8}\le
x^{3/4}$ with $(k,s)=1$ by Lemma \ref{prl4}, we have that
\be\begin{split}&\sum_{\substack{r\le
z^{1/8}\\(r,s)=1}}\tau_3(r)\sum_{n=0}^N\sum_{\substack{f|r\\\mu^n\le
f<\mu^{n+1}}}\Bigl\lvert\sum_{\substack{g\le
t/f\\(g,s)=1}}E(x;rg,-s)-\sum_{\substack{g\le
t/\mu^n\\(g,s)=1}}E(x;rg,-s)\Bigr\rvert\nonumber\\
&\ll\sum_{\substack{r\le
z^{1/8}\\(r,s)=1}}\tau_3(r)\sum_{n=0}^N\sum_{\substack{f|r\\\mu^n\le
f<\mu^{n+1}}}\sum_{t/\mu^{n+1}<g\le t/\mu^n}\frac\Delta{\phi(rg)\log x}\\
&\ll\frac{\Delta\log\mu}{\log x}\sum_{r\le
z^{1/8}}\frac{\tau_3(r)}{\phi(r)}\sum_{f|r}1\ll\frac\Delta{(\log
x)(\log y)^{B+1}}\end{split}\ee for all $t\in[y,z]$, by Lemma
\ref{prl7}, which is admissible. Combining the above estimate with
\eqref{imv17}\;we find that
\be\label{imv19}|\mathscr{R}^\prime|\ll\sup_{y\le t\le
z}\biggl\{\sum_{n=0}^N\sum_{\substack{r\le
z^{1/8}\\(r,s)=1}}\tau_3(r)\tau(r)\Bigl\lvert\sum_{\substack{g\le
t/\mu^n\\(g,s)=1}}E(x;rg,-s)\Bigr\rvert\biggr\}+\frac\Delta{(\log
x)(\log y)^{B+1}}.\ee Finally, since $$\frac x2\le x-\Delta\le
x\quad\text{and}\quad\Delta\ge\frac x{(\log x)^B},$$ Lemma
\ref{prl5}\;applied with $A=5B+56$ in combination with the
Cauchy-Schwarz inequality yields that
\be\begin{split}&\sum_{\substack{r\le
z^{1/8}\\(r,s)=1}}\tau_3(r)\tau(r)\Bigl\lvert\sum_{\substack{g\le
t/\mu^n\\(g,s)=1}}E(x;rg,-s)\Bigr\rvert\nonumber\\
&\ll\biggl(\frac{\Delta}{\log x}\sum_{r\le z^{1/8}}\sum_{g\le
t/\mu^n}\frac{(\tau_3(r)\tau(r))^2}{\phi(rg)}\biggr)^{1/2}\biggl(\sum_{\substack{r\le
z^{1/8}\\(r,s)=1}}\Bigl\lvert\sum_{\substack{g\le
t/\mu^n\\(g,s)=1}}E(x;rg,-s)\Bigr\rvert\biggr)^{1/2}\\
&\ll_s\sqrt{\Delta}(\log x)^{18}\frac{\sqrt{x}}{(\log
x)^{5B/2+28}}\le\frac\Delta{(\log x)^{2B+10}}\end{split}\ee for all
$t\in[y,z]$ and all $n\in\{0,1,\dots,N\}$, since $z^{1/8}\le
x^{1/12}$ and $z^{9/8}\le x^{3/4}$. Plugging this estimate into
\eqref{imv19}\;gives us that
$$|\mathscr{R}^\prime|\ll_sN\frac\Delta{(\log x)^{2B+10}}+\frac\Delta{(\log x)(\log
y)^{B+1}}\ll\frac\Delta{(\log x)(\log y)^{B+1}},$$ which is
admissible.
\end{proof}

We are now in position to complete the proof of Theorem
\ref{thminter1}.

\begin{proof}[Proof of Theorem \ref{thminter1}] Fix $\Delta\in(x(\log x)^{-B},x]$
and set $\Delta_1=\min\{\Delta,x/2\}$. If $\eta\le\frac{\log
y}{100}$, then the theorem follows immediately by Theorem
\ref{imvl1}\;and the trivial inequality
$$\hsd\ge\hs-H(x-\Delta_1,y,z;P_s).$$ On the other hand, if
$\eta\ge\frac{\log y}{100}$, then \be\begin{split}\hsd&\ge H(x,y,y^{101/100};P_s)-H(x-\Delta,y,y^{101/100};P_s)\nonumber\\
&\gg_s\frac\Delta x\frac{H(x,y,y^{101/100})}{\log
x}\asymp\frac\Delta x\frac{H(x,y,z)}{\log x},\end{split}\ee by
Theorem \ref{thm1}. In any case, Theorem \ref{thminter1}\;holds.
\end{proof}

Using Theorems \ref{thm2}\;and \ref{thminter1}\;together with the
fact that if $d|n$, then $(n/d)|d$ as well, we show Theorem
\ref{thminter2}.

\begin{proof}[Proof of Theorem \ref{thminter2}] We may assume that
$y>\sqrt{x}$; else the result follows from Theorems \ref{thm2}\;and
\ref{thminter1}\;with $\Delta=x$. For future reference, note the
trivial inequality
\be\label{imve7}\hs\ge\pi(z-s)-\pi(y-s)\asymp_{s,B}\frac{z-y}{\log
z}\ge\frac{\eta y}{\log x},\ee by the Prime Number Theorem. First,
suppose that $\eta\le\log^{-2}(5x/z).$ For $q\in\SN$ set
$$A_q=\{p+s\in(qy,qz]:p\equiv-s\pmod q\}.$$ If the shifted prime
$p+s\le x$ has a divisor $d\in(y,z]$, then writing $p+s=dq$ we have
that $q\le x/y$ and $p+s\in A_q$. So, by Lemma \ref{prl4}, we find
that \be\label{imve8}\begin{split}\hs\le\sum_{\substack{q\le
x/y\\(q,s)=1}}|A_q|+O_s(1)\ll_s\sum_{\substack{q\le
x/y\\(q,s)=1}}\frac{q(z-y)}{\phi(q)\log(z-y)}&\asymp_B\frac{\eta
y}{\log x}\sum_{\substack{q\le
x/y\\(q,s)=1}}\frac q{\phi(q)}\\
&\ll\frac{\eta x}{\log x}\asymp\frac{H(x,y,z)}{\log
x},\end{split}\ee by Theorem \ref{thm1}. This proves the upper bound
in Theorem \ref{thminter2}\;when $\eta\le\log^{-2}(5x/z)$. In order
to show the lower bound when $\eta\le\log^{-2}(5x/z)$ it suffices to
consider the case $z>x^{2/3}$, since if $z\le x^{2/3}$, then we
immediately obtain the result by Theorem \ref{imvl1}. If
$x/z\le2|s|+2$, then $y\asymp_sx$ and thus
$$\hs\gg_{s,B}\frac{\eta x}{\log x},$$ by \eqref{imve7}. Combining
this with Theorem \ref{thm1}\;we complete the proof in this case. So
assume that $x/z\ge2|s|+2$, in which case
$$\{x/2z<q\le x/z:(q,s)=1\}\neq\emptyset.$$ It is easy to see that
\be\label{imve9}\hs\ge\Bigl\lvert\bigcup_{\substack{x/2z<q\le
x/z\\(q,s)=1}}A_q\Bigr\rvert.\ee If we set
$$T(p)=\lvert\{x/2z<q\le x/z:(q,s)=1,\;p+s\in A_q\}\rvert,$$ then the Cauchy-Schwarz inequality and
\eqref{imve9}\;yield that \be\label{imve10a}\Bigl(\sum_{p+s\le
x}T(p)\Bigr)^2\le\hs\sum_{p+s\le x}T^2(p).\ee First, we estimate
$\sum_{p+s\le x}T(p)$. Let $C=C(B)>0$ be a constant so that
\be\label{imve7a}\sum_{\substack{q\le Q\\(q,s)=1}}\pi(X;q,-s)={\rm
li}(X)\sum_{\substack{q\le
Q\\(q,s)=1}}\frac1{\phi(q)}+O_{s,B}\Bigl(\frac X{(\log
X)^{B+2}}\Bigr)\ee for all $X\ge2$ and all $Q\le X(\log X)^{-C}$.
Such a constant exists by Lemma \ref{prl5}. If $x/z\le(\log
x)^{C+1}$, then the Siegel-Walfisz theorem \cite[p. 133]{dav}\;and
Lemma \ref{prl7}\;imply that
\be\label{imve10}\begin{split}\sum_{p+s\le
x}T(p)&=\sum_{\substack{x/2z<q\le
x/z\\(q,s)=1}}\bigl(\pi(qz-s;q,-s)-\pi(qy-s;q,-s)\bigr)\\
&\gg_{s,B}\sum_{\substack{x/2z<q\le
x/z\\(q,s)=1}}\frac{q(z-y)}{\phi(q)\log x}\asymp\frac{\eta x}{\log
x}\sum_{\substack{x/2z<q\le
x/z\\(q,s)=1}}\frac1{\phi(q)}\\
&\asymp_s\frac{\eta x}{\log x}.\end{split}\ee On the other hand, if
$x/z\ge(\log x)^{C+1}$, then \eqref{imve7a}\;and Lemma
\ref{prl7}\;yield that \be\label{imve10b}\begin{split}\sum_{p+s\le
x}T(p)&\ge\sum_{x/2<p+s\le 2x/3}\sum_{\substack{\frac{p+s}z\le
q<\frac{p+s}y\\(q,s)=1,q|(p+s)}}1\\
&=\sum_{\substack{y<d\le
z\\(d,s)=1}}\Bigl(\pi(2x/3-s;d,-s)-\pi(x/2-s;d,-s)\Bigr)+O_s(1)\\
&=\Bigl({\rm li}(2x/3-s)-{\rm li}(x/2-s)\Bigr)\sum_{\substack{y<d\le
z\\(d,s)=1}}\frac1{\phi(d)}+O_{s,B}\Bigl(\frac x{(\log x)^{B+2}}\Bigr)\\
&\asymp_{s,B}\frac{\eta x}{\log x},\end{split}\ee since
$\eta\le\log^{-2}5\le\log(3/2)$. Also,
\be\label{imve10c}\sum_{p+s\le x}T(p)=\sum_{\substack{x/2z<q\le
x/z\\(q,s)=1}}|A_q|\ll_{s,B}\frac{\eta x}{\log x},\ee by
\eqref{imve8}. Combining inequalities \eqref{imve10},
\eqref{imve10b}\;and \eqref{imve10c}\;we deduce that
\be\label{imve10d}\sum_{p+s\le x}T(p)\asymp_{s,B}\frac{\eta x}{\log
x},\ee uniformly in $\eta\le\log^{-2}(5x/z)$ and $x/z\ge2|s|+2$. We
now bound from above the sum \be\label{imve11a}\sum_{p+s\le
x}T^2(p)=\sum_{p+s\le x}T(p)+\sum_{p+s\le x}T(p)(T(p)-1).\ee We have
that \be\label{imve11}\begin{split}\sum_{p+s\le
x}T(p)(T(p)-1)&=\sum_p\sum_{\substack{x/2z<q_1\le
x/z\\\frac{p+s}z\le
q_1<\frac{p+s}y\\q_1|(p+s),(q_1,s)=1}}\sum_{\substack{x/2z<q_2\le
x/z\\\frac{p+s}z\le q_2<\frac{p+s}y\\q_2|(p+s),\;(q_2,s)=1\\q_2\neq
q_1}}1\\
&=2\sum_{\substack{\frac x{2z}<q_1<q_2\le\frac
xz\\(q_1q_2,s)=1}}\sum_{\substack{p\equiv-s\;({\rm
mod}\;[q_1,q_2])\\q_2y<p+s\le q_1z}}1.\end{split}\ee Note that we
must have $q_2<e^\eta q_1$; otherwise, the corresponding summand on
the right hand side of \eqref{imve11}\;is trivially zero. Lemma
\ref{prl4}\;and the trivial estimate $\pi(x+h;q,a)-\pi(x;q,a)\le
h/q+1$ imply \be\label{imve11b}\sum_{\substack{p\equiv-s\;({\rm
mod}\;[q_1,q_2])\\q_2y<p+s\le
q_1z}}1\ll_s\frac{q_1z-q_2y}{\phi([q_1,q_2])\log\bigl(3+(q_1z-q_2y)/[q_1,q_2]\bigr)}+1.\ee
Set $m=(q_1,q_2)$ and $q_i=mt_i$, $i=1,2$, in the right hand side of
\eqref{imve11}. Then we will have $m\le x/2z$ and $t_1<t_2<e^\eta
t_1$. With this notation \eqref{imve11}\;and \eqref{imve11b}\;yield
that \be\label{imve11c}\begin{split}\sum_{p+s\le
x}T(p)(T(p)-1)&\ll_s\log\log(x/y)\sum_{m\le\frac x{2z}}\sum_{\frac
x{2mz}<t_1\le\frac x{mz}}\sum_{t_1<t_2<e^\eta
t_1}\frac{z/t_2-y/t_1}{\log(3+z/t_2-y/t_1)}\\
&\quad+\frac xz\log(x/z)+\eta\Bigl(\frac xz\Bigr)^2\end{split}\ee
Fix $m$ and $t_1$. Recall that we have assumed that $z>x^{2/3}$ and
$(\log x)^{-B}\ll\eta\le(\log(5x/z))^{-2}$. So
$\log\frac{z-y}{t_1}\gg_B\log x$ and consequently
\be\begin{split}\sum_{t_1<t_2<e^\eta
t_1}\frac{z/t_2-y/t_1}{\log(3+z/t_2-y/t_1)}&\le\int_{t_1}^{e^\eta
t_1}\frac{z/u-y/t_1}{\log(3+z/u-y/t_1)}du\\
&=\int_0^{(z-y)/t_1}\frac w{\log(w+3)}\frac z{(w+y/t_1)^2}dw\\
&\asymp_B\frac{\eta^2y}{\log x},\nonumber\end{split}\ee which,
combined with \eqref{imve10d}, \eqref{imve11a}\;and \eqref{imve11c},
yields that
$$\sum_{p+s\le x}T^2(p)\ll_{s,B}\frac{\eta x}{\log x}+\frac{\eta^2x}{\log
x}\log(x/y)\log\log(x/y)\ll\frac{\eta x}{\log x}.$$ Plugging the
above estimate and \eqref{imve10d}\;into \eqref{imve10a}\;gives us
that
$$\hs\gg_{s,B}\frac{\eta x}{\log x}\asymp\frac{H(x,y,z)}{\log x},$$
by Theorem \ref{thm1}. This completes the proof of the theorem in
the case when $\eta\le\log^{-2}(5x/z)$. Assume now that
$\eta\ge\log^{-2}(5x/z)$. Fix a large positive constant
$y_0=y_0(s,B)$. If $x/z\le y_0$, then $\eta\ge\log^{-2}(5y_0)$.
Hence \eqref{imve7}\;implies that
$$\hs\gg_{s,B}\frac{z-y}{\log y}\gg_{y_0}\frac z{\log
y}\gg_{y_0}\frac x{\log x}.$$ Combining the above inequality with
the trivial estimate $\hs\le\pi(x-s)$ and Theorem \ref{thm1}\;we
deduce that
$$\hs\asymp_{y_0}\frac x{\log x}\asymp_{y_0}\frac{H(x,y,z)}{\log x},$$ which shows the
desired result in this case. So suppose that $x/z>y_0$. We proceed
as in the proof of Theorem 1 (iv) in \cite{kf}. Partition $(\frac
x{\log^2(x/z)},x]$ into intervals $(x_1,x_2]$ with
$$\frac{x_2}{\log^3(x/z)}\le x_2-x_1\le\frac{2x_2}{\log^3(x/z)}.$$
Observe that if $p+s\in(x_1,x_2]$, then
$$\tau\Bigl(p+s,\frac{x_2}z,\frac{x_1}y\Bigr)\ge1\Rightarrow\tau(p+s,y,z)\ge1\Rightarrow\tau\Bigl(p+s,\frac{x_1}z,\frac{x_2}y\Bigr)\ge1.$$
So
\be\label{imve20}\begin{split}\hs\le\sum_{x_1,x_2}\Bigl\{H\Bigl(x_2,\frac{x_1}z,\frac{x_2}y;P_s\Bigr)&-H\Bigl(x_1,\frac{x_1}z,\frac{x_2}y;P_s\Bigr)\Bigr\}
+O_s\Bigl(\frac x{\log x\log^2(x/z)}\Bigr).\end{split}\ee Fix such
an interval $(x_1,x_2]$. Then
$$\log\Bigl(\frac{x_1}z\Bigr)\asymp\log\Bigl(\frac xz\Bigr),\quad x_2-x_1\ge\frac{x_2}{\log^4(x_2/y)},
\quad\log\Bigl(\frac{x_2/y}{x_1/z}\Bigr)\asymp\eta,\quad\frac{x_1}z\le\sqrt{x_2},$$
provided that $y_0$ is large enough. Therefore Theorems
\ref{thm1}\;and \ref{thm2}\;and Lemma \ref{cor1}\;imply that
\be\label{imve21}\begin{split}H\Bigl(x_2,\frac{x_1}z,\frac{x_2}y;P_s\Bigr)-H\Bigl(x_1,\frac{x_1}z,\frac{x_2}y;P_s\Bigr)\ll_{s,B}\frac{x_2-x_1}{x_2\log
x_2}H\Bigl(x_2,\frac{x_1}z,\frac{x_2}y\Bigr)&\asymp\frac{x_2-x_1}{x\log x}H\Bigl(x,\frac xz,\frac xy\Bigr)\\
&\asymp\frac{x_2-x_1}{x\log x}H(x,y,z).\nonumber\end{split}\ee
Inserting the above inqequality into \eqref{imve20}\;and summing
over $x_1,x_2$ completes the proof of the desired upper bound. The
corresponding lower bound is obtained in a similar fashion starting
from
$$\hs\ge\sum_{x_1,x_2}\Bigl\{H\Bigl(x_2,\frac{x_2}z,\frac{x_1}y;P_s\Bigr)-H\Bigl(x_1,\frac{x_2}z,\frac{x_1}y;P_s\Bigr)\Bigr\}$$
and using Theorem \ref{thminter1}\;in place of Theorem \ref{thm2}.
\end{proof}

We conclude this section with the proof of Theorem \ref{thmlarge}.

\begin{proof}[Proof of Theorem \ref{thmlarge}] Let $2\le y\le z\le x$. Let
$P=\prod_{y<p\le z}p$ be the product of all prime numbers in
$(y,z]$. Then \be\label{lv1}0\le\pi(x-s)-\hs\le\lvert\{p\le
x-s:(p+s,P)=1\}\rvert.\ee Lemma \ref{l2}\;implies that the right
hand side of \eqref{lv1}\;is $$\ll_s\frac x{\log x}\frac{\log
y}{\log z},$$ which combined with the Prime Number Theorem completes
the proof.
\end{proof}

\medskip

{\bf Acknowledgements.} The author would like to thank Kevin Ford
for many valuable suggestions.

\bigskip\bigskip
%%%%%%%%%%%%%%%%%%%%%%%%%%%%%%%%%%%%%%%%%%%%%%%%%%%%%%
%%%%%%%%%%%%%%%%%%%%%%%%%%%%%%%%%%%%%%%%%%%%%%%%%%%%%%
%%%%%%%%%%%%%%%%%%%%%%%%%%%%%%%%%%%%%%%%%%%%%%%%%%%%%%
%%%%%%%%%%%%%%%%%%%%%%%%%%%%%%%%%%%%%%%%%%%%%%%%%%%%%%
%%%%%%%%%%%%%%%%%%%%%%%%%%%%%%%%%%%%%%%%%%%%%%%%%%%%%

\bibliographystyle{amsplain}

\end{document}